\documentclass[11pt,reqno]{amsart}
\pdfoutput=1

\usepackage[margin=27mm]{geometry}
\usepackage[T1]{fontenc}
\usepackage{lmodern}
\usepackage{microtype}
\usepackage{mathtools}
\usepackage{amssymb}
\usepackage{booktabs}
\usepackage{array}
\usepackage{xcolor}
\usepackage{tikz}
\usepackage{caption}
\usepackage[numbers,sort&compress]{natbib}
\usepackage{hyperref}
\usepackage[nameinlink,noabbrev]{cleveref}

\usetikzlibrary{decorations.markings,arrows.meta,calc}
\tikzset{ccw/.style={postaction={decorate,
 decoration={markings,mark=at position #1 with {\arrow{Stealth}}}}}}
\numberwithin{equation}{section}
\providecommand{\doi}[1]{doi:~\href{https://doi.org/#1}{\nolinkurl{#1}}}

\theoremstyle{plain}
\newtheorem{theorem}{Theorem}[section]
\newtheorem{lemma}[theorem]{Lemma}
\newtheorem{proposition}[theorem]{Proposition}
\newtheorem{corollary}[theorem]{Corollary}
\theoremstyle{definition}
\newtheorem{definition}[theorem]{Definition}
\theoremstyle{remark}
\newtheorem{remark}[theorem]{Remark}

\newcommand{\bv}[1]{\vec{#1}}
\newcommand{\bvp}[1]{\boldsymbol{#1}}

\newcommand{\region}[1]{\Omega_{#1}}
\newcommand{\interface}[1]{\Gamma_{#1}}
\newcommand{\normal}[1]{\bv \nu_{#1}}
\newcommand{\jump}[1]{[\![#1]\!]}
\newcommand{\curvature}[1]{\varkappa_{#1}}
\newcommand{\ball}[1]{\mathcal{B}_{#1}}
\newcommand{\tensionsym}{\sigma}
\newcommand{\tension}[1]{\tensionsym_{#1}}
\newcommand{\tensionVec}{\bvp{\tensionsym}}
\newcommand{\indexphasem}[1]{p_-(#1)}
\newcommand{\indexphasep}[1]{p_+(#1)}

\newcommand{\derivTheta}[1]{{#1}'}
\newcommand{\derivTime}[1]{\dot{#1}}
\newcommand{\rotation}[1]{\mathcal R_{#1}}

\newcommand{\R}{\mathbb R}

\newcommand{\chemical}[1]{w_{#1}}
\newcommand{\chemicalVec}[1]{\bvp w_{#1}}

\newcommand{\dd}{\mathrm d}

\newcommand{\compositesym}{\Sigma}
\newcommand{\Mcomp}[1]{M_{#1}^{\compositesym}}

\definecolor{figaccent}{RGB}{200,30,30}

\newif\ifredmarks
\redmarksfalse
\ifredmarks\else\colorlet{red}{black}\fi

\newcommand{\pstep}[2]{\textit{Step~#1 (#2).}\ }

\title[Thin-Layer Dichotomy in Three-Phase Mullins--Sekerka Flow]{Thin-Layer
 Dichotomy in Three-Phase Mullins--Sekerka Flow: Gap Dynamics and
 Composite Interfaces}
\author{Tokuhiro Eto}
\address{Universit\'{e} Claude Bernard Lyon 1, CNRS, Centrale Lyon,
 INSA Lyon, Universit\'{e} Jean Monnet, ICJ UMR5208,
 69622 Villeurbanne, France}
\email{eto@math.univ-lyon1.fr}
\date{}

\begin{document}

\begin{abstract}
We study the thin-layer limit of the Mullins--Sekerka flow in concentric circular geometries
at fixed wave number $k$, contrasting a two-phase annulus with a genuine
three-phase layer. A dichotomy emerges: for $k\ge2$ the rates blow up
like the inverse thickness when the same phase lies on both sides of the
layer, whereas between two different phases the matrix governing the
linearized Fourier mode extends analytically to zero thickness. Both
cases share the singular Dirichlet-to-Neumann factor of the thin annulus
and differ only in whether the Gibbs--Thomson law forces an order-one
potential difference across it. In the three-phase case the pure
thickness-modulation rate stays below the logarithmic growth rate of the
mean thickness, for every $k\ge1$ and all positive tensions, and, after
projection onto the mean displacement and the outer interface, the
collapsed pair moves as one interface carrying the sum of the two
tensions. All statements concern the instantaneous fixed-mode
linearization. The singularity is a property of the phase coupling, not
of the thin geometry.
\end{abstract}

\subjclass[2020]{35R35 (primary); 35B25, 35B35, 53E99 (secondary).}
\keywords{Mullins--Sekerka flow, thin-layer limit, multi-phase free boundary
 problem, Fourier-mode linearization, Dirichlet-to-Neumann map, composite
 interface}

\maketitle

\section{Introduction}\label{sec:intro}

A thin layer of one phase enclosed between two interfaces is a natural
regime in which the separation between the two interfaces may become
delicate.  If a phase occupies a layer of small thickness $\delta$,
local thinning may precede contact or a change of topology, and whether
a local reduction of the thickness is amplified or damped is the first
question to ask about such a layer.  For the Mullins--Sekerka flow
\cite{MS63}, this question is not local.
The normal velocity of an interface is determined by the jump
of the normal derivative of a harmonic chemical potential whose boundary
values are prescribed by the Gibbs--Thomson law on every interface;
the velocity at a point therefore depends on the whole configuration
through the diffusion field.  Two interfaces at distance $\delta$
interact through the field in the layer between them, and the
interaction can become strong as $\delta\downarrow0$: a harmonic
function that changes by an amount of order one across a layer of
thickness $\delta$ carries a flux of order $\delta^{-1}$ through it.
Whether such a change is actually imposed is, however, decided by the
conditions on the two interfaces and not by the geometry alone.
The purpose of this paper is to separate these two ingredients, thin
geometry and interface coupling, in the simplest setting in which the
separation can be carried out explicitly.

The two-phase Mullins--Sekerka flow shows that the singular interaction does occur.
Consider a thin annulus of a phase $C$ with the inner radius $R_1$ and outer radius $R_2$ embedded in a phase $A$, so that
the same phase lies on both sides of the layer. We represent this configuration as $A|C|A$
(Figure~\ref{fig:question}(a)). On the two circles bounding the annulus,
the Gibbs--Thomson law prescribes the same potential difference $w_A-w_C$.
The annulus is concave at its inner boundary and convex at its outer one,
so the prescribed values have opposite signs,
$+\tensionsym/R_1$ and $-\tensionsym/R_2$: one and the same difference must change
by about $2\tensionsym/R$ across the thickness $\delta$.
Therefore, the base state carries a flux of order $\tensionsym/(R\delta)$ through the layer,
and the factor $\delta^{-1}$ of the thin-annulus Dirichlet-to-Neumann map survives as a singular flux in the linearization.

This leads naturally to whether the singularity is a
consequence of the thin geometry alone.  Suppose that the layer
separates two distinct phases, $A|C|B$ with $A\ne B$
(Figure~\ref{fig:question}(b)).  The Gibbs--Thomson law now constrains
two different potential differences, $w_A-w_C$ on the inner circle and
$w_C-w_B$ on the outer one, and no single quantity is required to change
by an amount of order one across the layer; the third interface carries
$w_B-w_A$ and does not bound the layer.  The forcing of the two-phase
case is absent.  Nothing so far shows what replaces it: the
diffusion fields on the two sides of the layer remain coupled through
the thin annulus, and the annular Dirichlet-to-Neumann map still carries
the factor $\delta^{-1}$.  The question addressed in this paper is
whether the $\delta^{-1}$ singularity survives in a genuine three-phase
layer and, if it does not, what the limiting dynamics of the layer
thickness is.  The three-phase configuration thus enters not as one
instance of a multiphase flow but as the minimal model for this
question.

\begin{figure}[htb]
\centering
\begin{tikzpicture}[scale=0.95,>=Stealth,
  tag/.style={font=\footnotesize,fill=white,inner sep=1.2pt,rounded corners=1pt},
  phase/.style={font=\small},
  lead/.style={gray!70,line width=0.35pt},
  neck/.style={figaccent,line width=1pt}]
 \begin{scope}
  \fill[blue!6] (-2.15,-2.15) rectangle (2.15,2.15);
  \fill[orange!20] (0,0) circle (1.35);
  \fill[blue!6] (0,0) circle (0.85);
  \draw[thick] (0,0) circle (0.85);
  \draw[thick] (0,0) circle (1.35);
  \node[phase] at (0,0) {$A$};
  \node[phase] at (225:1.10) {$C$};
  \node[phase] at (-1.82,1.82) {$A$};
  \node[font=\footnotesize] at (0,0.40) {$+\tensionsym/R_1$};
  \draw[lead] (0,0.55) -- (0,0.83);
  \node[font=\footnotesize] at (0,1.62) {$-\tensionsym/R_2$};
  \draw[lead] (0,1.49) -- (0,1.37);
  \draw[<->] (0.85,0) -- (1.35,0);
  \node[font=\footnotesize,right] at (1.40,0) {$\delta$};
  \node[font=\small] at (0,-2.55) {(a) two-phase annulus};
 \end{scope}
 \begin{scope}[xshift=5.0cm]
  \fill[blue!6] (-2.15,-2.15) rectangle (2.15,2.15);
  \fill[green!12] (0,0) circle (1.85);
  \fill[orange!20] (0,0) circle (1.35);
  \fill[blue!6] (0,0) circle (0.85);
  \draw[thick] (0,0) circle (0.85);
  \draw[thick] (0,0) circle (1.35);
  \draw[thick] (0,0) circle (1.85);
  \node[phase] at (0,0) {$A$};
  \node[phase] at (225:1.10) {$C$};
  \node[phase] at (315:1.60) {$B$};
  \node[phase] at (-1.82,1.82) {$A$};
  \node[tag] at (62:0.85) {$\Gamma_1$};
  \node[tag] at (96:1.35) {$\Gamma_2$};
  \node[tag] at (130:1.85) {$\Gamma_3$};
  \draw[<->] (0.85,0) -- (1.35,0);
  \node[font=\footnotesize,right] at (1.40,0) {$\delta$};
  \node[font=\small] at (0,-2.55) {(b) three-phase layer};
 \end{scope}
 \begin{scope}[xshift=10.0cm]
  \fill[blue!6] (-2.15,-2.15) rectangle (2.15,2.15);
  \fill[green!12] (0,0) circle (1.85);
  \filldraw[even odd rule,fill=orange!20,draw=none]
   plot[domain=0:360,samples=181,smooth]
    ({(1.35-0.15*cos(3*\x))*cos(\x)},{(1.35-0.15*cos(3*\x))*sin(\x)}) -- cycle
   plot[domain=0:360,samples=181,smooth]
    ({(0.85+0.15*cos(3*\x))*cos(\x)},{(0.85+0.15*cos(3*\x))*sin(\x)}) -- cycle;
  \fill[blue!6] plot[domain=0:360,samples=181,smooth]
    ({(0.85+0.15*cos(3*\x))*cos(\x)},{(0.85+0.15*cos(3*\x))*sin(\x)}) -- cycle;
  \draw[thick] plot[domain=0:360,samples=181,smooth]
    ({(0.85+0.15*cos(3*\x))*cos(\x)},{(0.85+0.15*cos(3*\x))*sin(\x)});
  \draw[thick] plot[domain=0:360,samples=181,smooth]
    ({(1.35-0.15*cos(3*\x))*cos(\x)},{(1.35-0.15*cos(3*\x))*sin(\x)});
  \draw[thick] (0,0) circle (1.85);
  \node[phase] at (-0.20,-0.25) {$A$};
  \node[phase] at (300:1.10) {$C$};
  \node[phase] at (200:1.63) {$B$};
  \node[phase] at (-1.82,1.82) {$A$};
  \draw[<->] (-0.70,0) -- (-1.50,0);
  \node[font=\footnotesize] at (-1.10,0.28) {$h_C$};
  \draw[neck,->] (0.72,0) -- (0.97,0);
  \draw[neck,->] (1.47,0) -- (1.23,0);
  \node[figaccent,font=\LARGE\bfseries] at (0.45,0.22) {?};
  \node[font=\small] at (0,-2.55) {(c) thickness modulation};
 \end{scope}
\end{tikzpicture}
\caption{(a) A thin annulus of phase $C$ between two components of the
same phase $A$.  (b) A thin layer of $C$ between distinct phases $A$ and
$B$, with the three interfaces $\Gamma_1,\Gamma_2,\Gamma_3$.  (c) A
modulation of the thickness of the layer; $\delta$ is the
thickness of the layer, $h_C$ its mean thickness.}
\label{fig:question}
\end{figure}

We study this question for three concentric circles: the phase $A$ fills the
central disk and the unbounded exterior, the phase $C$ the inner annulus and
the phase $B$ the outer annulus, and we can represent the setting as $A|C|B|A$ in order of increasing radius
(Figure~\ref{fig:question}(b)).  Systems with three or more phases, in which interfaces of different
phase pairs interact through a common diffusion field, were derived and
analyzed in \cite{BGS98}; we use their formulation with one
chemical potential per phase.
The concentric evolution is an exact solution of the
conservative three-phase Mullins--Sekerka flow, i.e. the flow without reaction terms
and with no flux from infinity so that the two annular areas are conserved.
In this evolution, all three radii shrink, the central disk vanishes in finite time,
and the mean thickness $h_C=R_2-R_1$ of the $C$-layer increases
along the solution (Proposition~\ref{prop:base},
Corollary~\ref{cor:hC}).  The reference layer therefore widens while it
moves, and this changes what the relevant question is.  Whether a
modulation of the thickness grows or decays in absolute terms is only
part of the matter; for assessing whether thickness non-uniformity is
amplified, the relevant comparison is whether the modulation grows
relative to the mean thickness it modulates
(Figure~\ref{fig:question}(c)). We accordingly compare the
instantaneous rate of a thickness modulation with the logarithmic growth
rate of the mean thickness, and call the
difference the relative gap rate.

The rotational symmetry of the concentric state separates its
linearization into Fourier modes.  We perturb the three circles
radially, $r_j=R_j+a_j\cos k\theta$ with a fixed wave number $k\ge1$,
and linearize the flow in the amplitudes $\bvp a=(a_1,a_2,a_3)$; the
sine family behaves identically by rotation, and $k=0$ only moves the
state within the family of concentric solutions
(Remark~\ref{rem:kzero}).  Because the base state is radial, the
linearized problem does not mix wave numbers (Lemma~\ref{lem:modes}),
and within one mode the harmonic perturbations of the chemical
potentials are eliminated through the Dirichlet-to-Neumann maps of the
disk, of the two annuli and of the exterior.  What remains is a linear
system of ordinary differential equations for the three amplitudes,
\[
 \dot{\bvp a}=M_k(\bvp R)\,\bvp a,\qquad M_k(\bvp R)\in\R^{3\times3},
 \qquad \bvp R=(R_1,R_2,R_3),
\]
whose coefficient matrix we call the \emph{mode matrix}
(Proposition~\ref{prop:sixtrace}).  Thus $M_k$ is the restriction to
the mode-$k$ subspace of the linearization, about the concentric
solution, of the map from the shapes of the interfaces to their normal
velocities; the three circles are coupled because they share one
diffusion field.  It is not the Dirichlet-to-Neumann map itself: the
Gibbs--Thomson law converts the shape into boundary values of the
potentials, the Dirichlet-to-Neumann maps convert these boundary values
into fluxes, and the motion law converts the fluxes into velocities;
$M_k$ is the composite.  In particular $M_k$ depends on the tensions and
on which phases meet at each circle, whereas the Dirichlet-to-Neumann
maps depend on the geometry alone.  Since the radii move along the
concentric solution, $M_k(\bvp R(t))$ is a time-dependent matrix; every
statement of this paper concerns $M_k$ at a frozen instant.  The same
construction for a two-phase annulus, one phase occupying the annulus
and another its complement, gives a $2\times2$ mode matrix
$M^{(2)}_k$ (Proposition~\ref{prop:W5}).

For the inner pair we use the mean displacement and the thickness
modulation
\begin{equation}\label{eq:mg}
 m=\tfrac12(a_1+a_2),\qquad g=a_2-a_1 ,
\end{equation}
so that $m$ is the in-phase displacement of the layer and $g$ the gap,
its thickness modulation.  In the coordinates $(m,g,a_3)$ the
difference of the second and the first rows of $M_k$, the \emph{gap
row}, gives the rate of the thickness modulation,
\[
 \dot g=\beta_m\,m+\beta_g\,g+\beta_3\,a_3 .
\]
We call $\lambda_k:=\beta_g$ the \emph{pure-gap rate}, the
instantaneous logarithmic rate of a thickness modulation with
$m=a_3=0$, and $\Lambda_k:=\lambda_k-\frac{\dd}{\dd t}\log h_C$ the
\emph{relative gap rate}, the pure-gap rate measured against the
logarithmic growth rate \eqref{eq:hC} of the mean thickness
(Definition~\ref{def:gap}).  Finally, for radii $R<S$ and tensions
$\tension1,\tension2,\tension3$, the \emph{composite matrix}
$\Mcomp{k}$ is the two-phase mode matrix $M^{(2)}_k$ of the concentric
annulus with radii $(R,S)$, inner tension $\tension1+\tension2$ and
outer tension $\tension3$ (displayed in \eqref{eq:Mcomp-explicit}): it
describes a single interface carrying the sum of the two tensions of
the layer, inside the outer interface.

The thin-layer limit is the limit
$\delta=R_2-R_1\downarrow0$ at fixed inner radius $R$, fixed outer
radius $S$ and fixed $k$; we write $M_k^\delta$ for the mode matrix at
the frozen radii $(R,R+\delta,S)$.  The central question is what happens
to the $g$-dynamics and to $M_k^\delta$ in this limit.  Our main results
are the following.

\begin{theorem}[Thin-layer dichotomy and composite interface]
\label{thm:main}
Let $\tension{1},\tension{2},\tension{3}>0$ be the tensions of the three circles
$\interface{1},\interface{2},\interface{3}$, so that
$(\tension{1},\tension{2},\tension{3})=(\tension{AC},\tension{CB},\tension{BA})$,
let $R>0$, $\tau>1$, $S=\tau R$, and let $k\ge1$ be an integer.
For $\delta\in(0,S-R)$, let $M_k^\delta$ be the mode matrix of the concentric
solution with phase partition \eqref{eq:phase-partition} at the frozen
radii $(R_1,R_2,R_3)=(R,R+\delta,S)$, and set
\begin{equation}\label{eq:alphac}
 \alpha_c=\frac{(\tension{1}+\tension{2})/R+\tension{3}/S}{2\log(S/R)} .
\end{equation}
\begin{enumerate}
\item[(a)] $M_k^\delta$ extends to an analytic function of $\delta$ at
$\delta=0$; in particular it has a finite limit $M_k^0$.
\item[(b)] In the coordinates $(m,g,a_3)$ the limit has the form
\begin{equation}\label{eq:main-form}
 M_k^0=
 \begin{pmatrix}
 (\Mcomp{k})_{11} & c_1 & (\Mcomp{k})_{12}\\
 F_k & \lambda_k^0 & 0\\
 (\Mcomp{k})_{21} & c_3 & (\Mcomp{k})_{22}
 \end{pmatrix},
\end{equation}
where $\Mcomp{k}$ is the composite matrix with radii $(R,S)$ and
tensions $(\tension{1}+\tension{2},\tension{3})$, $c_1,c_3$ are real
constants depending on $k$, $R$, $S$ and the tensions, and
\begin{equation}\label{eq:main-gap}
 F_k=\frac{2k(k^2-1)(\tension{1}-\tension{2})}{3R^3},\qquad
 \lambda_k^0=-\frac{k(k^2-1)(\tension{1}+\tension{2})}{3R^3}
            +\frac{(3-k)\,\alpha_c}{3R^2}.
\end{equation}
\item[(c)] The pure-gap rate lies strictly below the limiting
logarithmic growth rate $\alpha_c/R^2$ of the mean thickness $R_2-R_1$,
that is, the relative gap rate is negative:
\begin{equation}\label{eq:main-relative}
 \Lambda_k^0:=\lambda_k^0-\frac{\alpha_c}{R^2}
 =-\frac{k}{3}\left(\frac{(k^2-1)(\tension{1}+\tension{2})}{R^3}
                     +\frac{\alpha_c}{R^2}\right)<0 .
\end{equation}
\item[(d)] If $\tension{1}=\tension{2}$ or $k=1$, then $F_k=0$, the subspace
$V=\{a_1=a_2\}$ is invariant under $M_k^0$, the restriction
$M_k^0|_V$ is $\Mcomp{k}$, and
\begin{equation}\label{eq:main-split}
 \operatorname{spec}M_k^0
 =\{\lambda_k^0\}\cup\operatorname{spec}\Mcomp{k},
\end{equation}
where $\operatorname{spec}A$ denotes the set of all eigenvalues of a square matrix $A$.
\end{enumerate}
\end{theorem}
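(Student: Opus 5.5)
The plan is to work directly with the explicit representation of the mode matrix in Proposition~\ref{prop:sixtrace}. That proposition writes $M_k$ as the composite of three maps: the Gibbs--Thomson map from amplitudes to boundary traces, the block Dirichlet-to-Neumann maps of the disk, the two annuli and the exterior, and the motion law. In the thin-layer limit only the inner annulus $R<r<R+\delta$ is singular. Its mode-$k$ Dirichlet-to-Neumann matrix has the expansion
\[
\frac{1}{\delta}\begin{pmatrix}1&-1\\-1&1\end{pmatrix}
+\begin{pmatrix}*&*\\ *&*\end{pmatrix}+O(\delta).
\]
The outer annulus $(R+\delta,S)$ and all other ingredients depend analytically on $\delta$ near $0$, including the base velocities, which involve $1/\log(R_2/R_1)$ in the $C$-layer.

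The key structural point is this. In the $A|C|B$ layer the two traces of $w_C$ are not both prescribed with an order-one jump. The velocities are determined by solving for the fluxes via the equations that combine the Gibbs--Thomson data $w_A-w_C$ on $\interface1$ and $w_C-w_B$ on $\interface2$, with the unknown flux continuity. I would therefore rewrite the linear system for the unknown traces, in the sum/difference variables of $w_C$ across the layer, as a Schur complement in which the $\delta^{-1}$ block is inverted. Concretely, the difference $w_C|_{\interface2}-w_C|_{\interface1}$ is an unknown rather than prescribed data. Solving the flux balance gives that this difference is $O(\delta)$ times the flux, so the product stays bounded and analytic. This yields (a) by the analytic implicit function theorem or Cramer's rule: the determinant of the reduced system has a nonzero limit at $\delta=0$.

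For (b), I would compute the limit in the coordinates $(m,g,a_3)$. The base-state contributions must also be expanded: the layer flux $\sim 1/\log(1+\delta/R)$ combined with curvature-jump terms in the linearized Gibbs--Thomson law and the normal-derivative correction $a_j\partial_r^2 w$. At $\delta=0$ the $m$ and $a_3$ columns of the first and third rows should reduce to the two-phase problem with a single interface whose Gibbs--Thomson value is $(\tension1+\tension2)\curvature{}$, since $w_A-w_B=(w_A-w_C)+(w_C-w_B)$. This identifies them with $\Mcomp{k}$ from Proposition~\ref{prop:W5}. The gap row is the hardest computation. I expect the $1/\delta$ terms in rows 1 and 2 to cancel in the difference only up to $O(\delta)$ corrections, which must be expanded to second order to produce the $k(k^2-1)/(3R^3)$ coefficients and the $(3-k)\alpha_c/(3R^2)$ term. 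Here $\alpha_c$ enters as the limiting base-state flux through the outer annulus, matched with the limit of $\dot h_C/h_C$ from Corollary~\ref{cor:hC}. The vanishing $(2,3)$ entry should follow because an outer perturbation changes both layer boundaries' fluxes identically to leading order.

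This second-order expansion of the gap row is the main obstacle. I would carry it out by first writing the annular Dirichlet-to-Neumann entries via $\coth(k\log(1+\delta/R))$ and $\operatorname{csch}(k\log(1+\delta/R))$. I would then Taylor-expand to $O(\delta)$ and track the cancellation symbolically.

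Parts (c) and (d) are then algebraic. For (c), subtract $\alpha_c/R^2$, the limit of the logarithmic growth rate from Corollary~\ref{cor:hC}, from $\lambda_k^0$ to obtain the displayed formula. Negativity is immediate since $k\ge1$, $\alpha_c>0$ (because $\tau>1$ and the tensions are positive), and $k^2-1\ge0$. For (d), $F_k=0$ when $\tension1=\tension2$ or $k=1$, so the $g$-row is $(0,\lambda_k^0,0)$. Hence $V=\{g=0\}$ is invariant, and the restriction in the basis $(m,a_3)$ is read off from the first and third rows as $\Mcomp{k}$. The matrix is block triangular after permuting coordinates to $(m,a_3,g)$, which gives the spectral splitting \eqref{eq:main-split}.
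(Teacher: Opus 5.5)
Your architecture matches the paper's: promote the difference of boundary coefficients across the thin annulus, divided by $\delta$, to an unknown; get analyticity by Cramer's rule; read off (c) and (d) by algebra. Your (c) and (d) are fine. But the step that (a) rests on, namely that the regularized system is nonsingular at $\delta=0$, is only asserted. Lemma~\ref{lem:global-invertibility} does not cover the coincident geometry $R_1=R_2$. The paper proves it by noting that at $\delta=0$ the system in $X=\bvp u_2^-$, $Y=\bvp u_3^-$, $U=\Delta_1/\delta$ is triangular:
\begin{itemize}
\item GT$_1$ and GT$_2$ give $X$ (determinant $3$);
\item FX$_3$ and then GT$_3$ give $Y$;
\item FX$_1$ gives $U_2=\frac kRX_2$, and FX$_2$ gives $U_1$.
\end{itemize}

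Two points in your version need correcting.
\begin{itemize}
\item $U$ must be the full vector $\Delta_1/\delta\in\R^2$, not only the $w_C$-difference. The no-flux rows FX$_1$ (for $w_B$) and FX$_2$ (for $w_A$) are what pin these differences at $O(\delta)$. Also, ``inverting the $\delta^{-1}$ block'' cannot be meant literally, because its leading part $\delta^{-1}\begin{pmatrix}1&-1\\-1&1\end{pmatrix}$ has rank one.
\item Your description of the base state is wrong. The layer flux coefficient is the $\alpha$ of \eqref{eq:base-flow}. Its denominator is $2\log(R_3/R_1)$, so it is analytic in $\delta$ with $\alpha(0)=\alpha_c$. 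The $1/\log(R_2/R_1)$ behaviour belongs to the two-phase $\bar\alpha$ of \eqref{eq:base2}. This is not cosmetic: with an $O(\delta^{-1})$ layer flux, all base data $J_j,H_j,G_j$ in \eqref{eq:gtrow}--\eqref{eq:DE} would be singular and your Cramer argument would fail. The regularity of $\alpha$ is half of the dichotomy.
\end{itemize}

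For (b) you give expectations rather than computations, and the obstacle you foresee comes from leaving the regularized variables. Once (a) holds, each row of $M_k^\delta$ is finite on its own. There are no $\delta^{-1}$ terms in rows 1 and 2 that cancel in their difference, and no second-order Taylor expansion is needed: $M_k^0$ is obtained by setting $\delta=0$ in the regularized system.

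Let $s,c$ be the hyperbolic sine and cosine of $k\log(S/R)$, $Z=X-a_2J_2$, and $v=\frac{k}{Rs}(Y-cZ)$. Then FX$_3$ gives $Y_1=(c-s)Z_1$, hence $v_1=-\frac kRZ_1$. The terms in $X_2$ and $v_2$ cancel, and the gap row is $-\frac{2k}{R}X_1-\frac{\alpha_c}{R^2}\bigl((k+1)a_1+(k-1)a_2\bigr)$. With \eqref{eq:limit-X} this gives $F_k$ and $\lambda_k^0$ directly. Your heuristic for the zero $(2,3)$ entry becomes exact here, since $a_3$ enters only through $Y_2$, that is, through $v_2$.

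The composite identification, however, does not follow from $w_A-w_B=(w_A-w_C)+(w_C-w_B)$. For $a_1=a_2=m$, rows 1 and 2 equal the first row of $\Mcomp{k}$ shifted by $\mp\frac12F_km$, so only their average matches. The $a_3$-row matches the second row of $\Mcomp{k}$ only after a cancellation using $(c-s)(c+s)=1$. These explicit verifications (Lemmas~\ref{lem:L4B} and \ref{lem:L4D}) are the content of (b), and your plan does not supply them.
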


The result for the three-phase case above is to be compared against the two-phase annulus
case of Figure~\ref{fig:question}(a), for which the same thin-layer
limit turns out to be singular.

\begin{theorem}[Two-phase thin-layer blow-up]
\label{thm:twophase}
Let a thin annulus of one phase with radii $R_1=R$ and $R_2=R+\delta$
be embedded in another phase, with a common tension $\tensionsym>0$ on
both circles, and let $M^{(2)}_k$ be its mode matrix.
\begin{enumerate}
\item[(a)] For each fixed integer $k\ge2$, as $\delta\downarrow0$, the
eigenvalues $\lambda_\pm$ of $M^{(2)}_k$ satisfy
\begin{equation}\label{eq:thin2}
 \lambda_\pm\sim\frac{\tensionsym}{R^2\delta}\,\mu_\pm(k),\qquad
 \mu_\pm(k)=\frac{-(k^2-1)\pm\sqrt{(k^2-1)(k^2+7)}}{2},
 \qquad \mu_+(k)>1,
\end{equation}
where $\lambda_\pm\sim G(\delta)$ means that $\lambda_\pm/G(\delta)\to 1$ as $\delta\downarrow 0$,
and the growing branch has a positive relative gap rate at leading
order,
\begin{equation}\label{eq:twophase-relative}
 \lambda_+-\frac{\dd}{\dd t}\log(R_2-R_1)
 =\frac{\tensionsym}{R^2\delta}\bigl(\mu_+(k)-1+O(\delta)\bigr).
\end{equation}
\item[(b)] For each fixed integer $k\ge2$, as $\delta\downarrow0$,
after a choice of sign both normalized eigenvectors tend to
$(1,1)^\top/\sqrt2$, that is, the distance of each of them to the pair
$\pm(1,1)^\top/\sqrt2$ tends to $0$, and the gap component of the
growing mode is $O(\delta)$, while the instantaneous pure-gap rate
stays finite,
\begin{equation}\label{eq:twophase-gap}
 \lambda_{\rm inst}:=-\tfrac12\bigl[(M^{(2)}_k\bvp v)_2-(M^{(2)}_k\bvp v)_1\bigr]
 \;\longrightarrow\;-\,\frac{\tensionsym\,k(k^2-1)}{2R^3},
 \qquad \bvp v=(1,-1)^\top .
\end{equation}
\item[(c)] For $k=1$, by contrast, the matrix $M^{(2)}_1$ is nonzero
and nilpotent for every $\delta>0$:
\begin{equation}\label{eq:twophase-k1}
 M^{(2)}_1=\frac{-\bar\alpha}{R_1R_2\sinh L}
 \begin{pmatrix}1&-1\\1&-1\end{pmatrix},
 \qquad
 \operatorname{spec}M^{(2)}_1=\{0\},
\end{equation}
where $L=\log(R_2/R_1)$ and $\bar\alpha=\tensionsym(1/R_1+1/R_2)/(2L)$
is the base coefficient of the two-phase annulus, $\dot R_j=-\bar\alpha/R_j$
\eqref{eq:base2}.
\end{enumerate}
\end{theorem}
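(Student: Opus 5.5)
The statement to prove is Theorem~\ref{thm:twophase}, and the plan is to compute the $2\times2$ mode matrix $M^{(2)}_k$ of Proposition~\ref{prop:W5} explicitly in terms of $L=\log(R_2/R_1)$ and then expand it as $\delta\downarrow0$.

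\textbf{Setting up the matrix.} In a mode $k$, the perturbed potential has the form
\begin{itemize}
\item $c\,r^k$ in the inner disk,
\item $c\,r^{-k}$ in the exterior,
\item $\alpha r^k+\beta r^{-k}$ in the annulus.
\end{itemize}
The Gibbs--Thomson data carry two contributions. The first is the curvature perturbation $\tensionsym(k^2-1)a_j/R_j^2$, with the appropriate sign. The second is the shift $-\partial_r\bar w\,a_j$ coming from the base potential. In the annulus the base potential is $\bar w=\bar\alpha\log r+\text{const}$, with gradient $\bar\alpha/R_j$. In the disk and the exterior it is constant.

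The annular Dirichlet-to-Neumann map has entries of the form
\[
k\coth(kL)/R_j \quad\text{(diagonal)},\qquad k/(R_j\sinh kL) \quad\text{(off-diagonal)}.
\]
The velocity $\dot a_j$ is the jump of the normal flux plus the base-curvature term $\bar\alpha a_j/R_j^2$, which comes from $\dot R_j=-\bar\alpha/R_j$ evaluated at the perturbed radius. Collecting these terms gives a closed formula for the entries of $M^{(2)}_k$ as rational functions of $\coth kL$, $1/\sinh kL$ and $\bar\alpha$.

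\textbf{Part (c).} For $k=1$ the curvature term vanishes. Only the $\bar\alpha$ terms survive, and they should combine into the rank-one matrix \eqref{eq:twophase-k1}; its nilpotency can be checked directly, since its trace is zero. I expect this to follow by direct substitution: $k\coth L$ combines with the $\bar\alpha$ contributions, and the disk and exterior fluxes $\pm k/R$ cancel against the base terms.

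\textbf{Part (a).} For $k\ge2$, expand with $L=\delta/R+O(\delta^2)$. This gives
\begin{itemize}
\item $\coth kL\sim R/(k\delta)$ and $1/\sinh kL\sim R/(k\delta)$;
\item $\bar\alpha\sim \tensionsym R/\delta$, which is the same order $\delta^{-1}$.
\end{itemize}
Hence $M^{(2)}_k=\delta^{-1}N_k+O(1)$ with an explicit leading matrix
\[
N_k=\frac{\tensionsym}{R^2}\begin{pmatrix}p&q\\ q'&p'\end{pmatrix}.
\]
The eigenvalues of $N_k$ (in units of $\tensionsym/R^2$) are the roots of
\[
\mu^2+(k^2-1)\mu-2(k^2-1)=0,
\]
which gives exactly $\mu_\pm(k)$. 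The inequality $\mu_+>1$ is equivalent to $\sqrt{(k^2-1)(k^2+7)}>k^2+1$, that is, to $k^4+6k^2-7>k^4+2k^2+1$, i.e. $k^2>2$; this holds for $k\ge2$. Since the two roots are distinct, the eigenvalues of $M^{(2)}_k$ satisfy $\lambda_\pm=\delta^{-1}(\mu_\pm\tensionsym/R^2+O(\delta))$.

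For the relative rate we need $\frac{\dd}{\dd t}\log\delta$. From \eqref{eq:base2}, $\dot R_2-\dot R_1=\bar\alpha\,\delta/(R_1R_2)$, so $\frac{\dd}{\dd t}\log\delta=\bar\alpha/(R_1R_2)=\tensionsym/(R^2\delta)(1+O(\delta))$. Subtracting this from $\lambda_+$ gives \eqref{eq:twophase-relative}.

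\textbf{Part (b).} The key structural point is that $N_k$ annihilates or preserves the in-phase direction. If the leading singular part has the form $\delta^{-1}(\text{rank-}1\text{ or symmetric})$ with $N_k\bvp v$ of order one for $\bvp v=(1,-1)^\top$, then the $\delta^{-1}$ terms cancel in $\lambda_{\rm inst}$. This is expected physically: a pure gap perturbation changes curvature in a way that does not force an order-one potential difference across the layer.

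So the plan is to show that the $\delta^{-1}$ parts of $(M\bvp v)_2-(M\bvp v)_1$ cancel, and then compute the $O(1)$ remainder. The remainder requires next-order expansions of $\coth kL$, $1/\sinh kL$, $\bar\alpha$ and $R_2^{-j}$, and should yield $-\tensionsym k(k^2-1)/(2R^3)$. The eigenvector statements follow from perturbation of $N_k$. Its eigenvectors would be computed explicitly; since both must tend to $(1,1)/\sqrt2$, $N_k$ should be non-diagonalizable in a suitable normalization, or the eigenvector limits come from a degenerate leading structure. I would verify this by writing $N_k$ in the $(m,g)$ basis, where I expect its first column to be dominated by $m$. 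The $O(\delta)$ gap component of the growing mode then comes from the eigenvector equation, since the $g$-row is $O(1)$ relative to $\delta^{-1}$.

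\textbf{Main obstacle.} The hardest step is the second-order expansion in (b), together with reconciling the eigenvector claim with the leading matrix. Cancellations among $\delta^{-1}$ terms require careful tracking of $R_2=R+\delta$ in every entry; I would carry this out symbolically in the $(m,g)$ coordinates.
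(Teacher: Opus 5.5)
\textbf{There is a genuine gap.} Part (c) and your formula $\frac{\dd}{\dd t}\log(R_2-R_1)=\bar\alpha/(R_1R_2)$ are fine. The step on which (a) and (b) rest is false: $M^{(2)}_k$ is not of the form $\delta^{-1}N_k+O(1)$.

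With equal tensions $\bar\alpha\sim\tensionsym/\delta$ (not $\tensionsym R/\delta$). In \eqref{eq:W5} it is multiplied by $k/(R_j\sinh kL)$ and by $k\cosh kL/(R_j\sinh kL)$, which are themselves of order $\delta^{-1}$. Every entry is therefore of order $\delta^{-2}$, and in fact
\[
 \delta^2M^{(2)}_k\longrightarrow\frac{\tensionsym}{R}\begin{pmatrix}-1&1\\-1&1\end{pmatrix},
\]
a nilpotent matrix whose kernel is spanned by the in-phase vector $(1,1)^\top$.

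Consequences for your argument:
\begin{itemize}
\item There is no leading matrix with eigenvalues $\mu_\pm$, so perturbing the eigenvalues of the leading term gives nothing. A nilpotent leading part with a generic correction of relative order $\delta$ would even produce eigenvalues of order $\delta^{-3/2}$.
\item That $\lambda_\pm$ are of order $\delta^{-1}$ is a cancellation you must prove. The $\delta^{-2}$ part of the trace vanishes. In $\det M^{(2)}_k$ the $\delta^{-4}$ terms cancel by $\cosh^2-\sinh^2=1$, and the remaining possible $\delta^{-3}$ terms, the cross terms proportional to $\bar\alpha^2\coth kL$, cancel exactly. What is left is $\delta\operatorname{tr}M^{(2)}_k\to-(k^2-1)\tensionsym/R^2$ and $\delta^2\det M^{(2)}_k\to-2(k^2-1)\tensionsym^2/R^4$. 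The paper reads off $\lambda_\pm$ from these two limits by the quadratic formula.
\item The eigenvector difficulty you flagged is a symptom of the same error. If $\delta M^{(2)}_k$ converged to a matrix with two distinct eigenvalues, its two eigenvectors would have distinct limits, contradicting (b). With the correct structure (b) is immediate: multiply the first row of $(M^{(2)}_k-\lambda_\pm)\bvp v=0$ by $\delta^2$ and use $\delta^2\lambda_\pm\to0$ to get $v_2-v_1=O(\delta)\max(|v_1|,|v_2|)$.
\item $M^{(2)}_k(1,-1)^\top$ has size $\delta^{-2}$, not order one. For $\lambda_{\rm inst}$ you must therefore show that both the $\delta^{-2}$ and the $\delta^{-1}$ parts of the gap row cancel, not only the $\delta^{-1}$ part.
\end{itemize}

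\textbf{How to repair it.} Your perturbative idea can be saved by a rescaling, which would then be a genuinely different route from the paper's trace--determinant computation. Write $a_1=m-g/2$, $a_2=m+g/2$ and $g=(\delta/R)\tilde g$. Check entrywise that the $m$-row is $O(\delta^{-1})$ in $m$ and $O(\delta^{-2})$ in $g$, while the $g$-row is $O(1)$ in both; this needs the cancellations above. Then in the coordinates $(m,\tilde g)$ the mode matrix becomes $\delta^{-1}\tilde N_k+O(1)$ with
\[
 \tilde N_k=\frac{\tensionsym}{R^2}\begin{pmatrix}-(k^2-1)&1\\2(k^2-1)&0\end{pmatrix}.
\]
Its characteristic polynomial is your $\mu^2+(k^2-1)\mu-2(k^2-1)$, in units of $\tensionsym/R^2$, and its roots are simple. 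Standard perturbation then gives $\lambda_\pm$ with relative error $O(\delta)$. The eigenvectors have bounded $\tilde g$-component, that is, gap component $O(\delta)$, which explains directly why both eigenvectors collapse onto $(1,1)^\top/\sqrt2$.
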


Theorem~\ref{thm:twophase} is proved in Section~\ref{sec:twophase} and
Theorem~\ref{thm:main} in Section~\ref{sec:thinlimit}.  Let us
summarize the two theorems to three points.

\smallskip\noindent
\emph{The two-phase/three-phase dichotomy.}
The two-phase annulus has fixed-mode rates of order $\delta^{-1}$ at every mode
$k\ge2$; the three-phase layer has none.  In the two-phase annulus, the
singular rates belong to the coupled motion of the two circles rather
than to the thickness alone: both eigenvectors approach the in-phase
vector as $\delta\downarrow0$, and the instantaneous rate of a pure
thickness modulation stays bounded
(Theorem~\ref{thm:twophase}).  The two cases share the same
thin-annulus geometry and the same singular factor $\delta^{-1}$ in its
Dirichlet-to-Neumann map.  What differs is what multiplies that factor:
in the two-phase annulus the Gibbs--Thomson law forces a potential difference of order
one across the layer, while in the three-phase layer the interface
conditions make that difference of order $\delta$, and the product
stays bounded.  The singularity is a property of the phase coupling,
not of the thin geometry.  This is a statement at fixed $k$; the regime
$k\delta/R\asymp1$, in which the two circles of the layer decouple in
the Dirichlet-to-Neumann maps, is not treated.

\smallskip\noindent
\emph{Relative stabilization of the thickness.}
The rate of a pure thickness modulation is given in closed form.
It turns out to be positive for $k=1$, where the modulation is an
off-center displacement of the inner circle relative to the outer one,
and for $k=2$ when the outer annulus is thin
(Corollary~\ref{cor:T5}), while it is negative for $k\ge3$.  But it lies
below the logarithmic growth rate of the mean thickness for every $k$
and all positive tensions.  A modulation may grow in absolute size; in
the fixed-mode thin-layer limit it does not outgrow the layer.  This
is a statement about the instantaneous linearization, not a proof that
a three-phase film cannot rupture.  It concerns the pure-gap rate
alone: with unequal tensions the mean
displacement of the layer forces its thickness, so the inequality does
not by itself bound the normalized amplitude of an arbitrary
perturbation.

\smallskip\noindent
\emph{The composite interface.}  Projected onto the
mean displacement of the pair and the outer interface, the limit
dynamics is that of a two-interface problem in which the collapsed pair
is a single interface carrying the sum of its two tensions.  It is not
a two-phase model with one uniform tension between $A$ and $B$, and it
is not a wetting law.  When the two tensions agree, or when $k=1$, the
projection is exact: the in-phase subspace is invariant and the
spectrum splits into the pure-gap rate and the spectrum of the
composite problem.  For unequal tensions it is not, because an in-phase
displacement produces different Gibbs--Thomson perturbations on the two
sides of the layer; the exact splitting is a symmetry property.
The composite spectrum can contain a positive eigenvalue (for equal
tensions, $S=2R$ and $k=2$ it contains exactly one,
Remark~\ref{rem:positive-comp}); its eigenvector is an in-phase mode of
the outer shell with no thickness modulation, and for small $\delta$ it
persists with a gap component of order $\delta$.  The theorem is
therefore not a stability statement for the concentric configuration,
and a positive frozen eigenvalue is not in conflict with the negative
relative gap rate; a dynamical statement along the orbit requires a
separate non-autonomous estimate, which cannot rest on a uniform
negativity condition for the full frozen spectrum.

The closest Mullins--Sekerka comparison is the stability analysis by
Garcke et al. \cite{GLNS26} for a model of active droplets, in which a shell-type
configuration is sustained by reaction terms and is stationary.  The
present problem is conservative and has no reaction; its reference state
is an exact shrinking solution rather than an equilibrium; and the
object of study is the thin-interlayer limit rather than the stability
of a stationary shell.  Anjos and Li \cite{AnjosLi20} studied the thickness-dependent
coupling and the fingering of the two interfaces of an injection-driven
three-fluid annulus in a radial Hele--Shaw cell; in the in-phase
thin-annulus limit their equal-tension model reduces to a single
interface carrying the sum of the two tensions.
Liang et al. \cite{LiangEtAl11} found a related summed-tension limit for the in-phase
branch of thin concentric cylindrical shells in Stokes flow.
In both cases the reduction concerns two interfaces; in
the present three-interface problem the summed tension appears, for
arbitrary positive tensions, only after projection onto the mean
displacement and the outer interface.

We close the introduction with a brief review of classical results on the Mullins--Sekerka flow.
This mathematical model goes back to the linear stability analysis of Mullins and Sekerka \cite{MS63}
and to the ripening theory of Lifshitz--Slyozov and Wagner \cite{LS61,W61},
and it is the sharp-interface limit of the Cahn--Hilliard equation
\cite{CH58,Pego89,ABC94,S96}. For the two-phase case, well-posedness
\cite{Chen93,CHY96,EscherSimonett97,EMM24}, the stability of spheres \cite{ES98}
and of stationary solutions with boundary contact
\cite{ARW21,GR22}, and the long-time behavior in two dimensions
\cite{JMPS23} are established; see \cite{Garcke13,BDGP23} for surveys of
this modeling.

In the remainder, we mention literature of numerical schemes for the Mullins--Sekerka flow
together with its phase-field counterpart, the Cahn--Hilliard equation.
Phase-field methods evolve a
Cahn--Hilliard approximation of the flow \cite{Glasner03}; for the
mixed finite element method for the Cahn--Hilliard equation of
\cite{FengProhl04}, Feng and Prohl \cite{FengProhl05} proved that the
discrete solutions converge to the solution of the two-phase Hele--Shaw
problem as the interface thickness and the discretization parameters
tend to zero, provided that the Hele--Shaw problem admits a classical
solution; for the Stefan problem with the Gibbs--Thomson condition, a
level-set method couples the temperature field to an implicit interface
\cite{CMOS97};
boundary-integral methods discretize the
interface alone and represent the harmonic field by layer potentials
\cite{ZCH96,BCD95,CKT17,BB00} and have been used for large-scale
coarsening computations at finite area fraction
\cite{AV94,AkaiwaMeiron95}, building on boundary-integral and
gradient-flow frameworks for interfacial dynamics
\cite{HLS94,Mayer00,HLS01}, with hybrid and locally parametrized
variants for Hele--Shaw flow with surface tension \cite{HLOZ97,ZHH26},
with convergence analyses for interfacial
flows with surface tension \cite{CenicerosHou98,ALS17} and, through
recursively compressed inverse preconditioning, boundaries that contain
triple junctions \cite{HO08,H13}; front-tracking methods for curve
networks moving by curvature go back to \cite{BW95}, and parametric
finite element methods give unconditional stability and structure
preservation for geometric evolution equations
\cite{BGN07,BGN08,BGN08a,BGN10,BGN20,BZ21,R23}, for clusters of
immiscible phases \cite{N09} and for multiphase flow with triple
junctions \cite{GNZ26}, and, coupled to a
bulk field, for the two-phase Stefan and Mullins--Sekerka problems
\cite{BGN10-2} and for their multiphase versions with triple junctions
\cite{EGN24,EGN26,EGN26-2}; and the method of fundamental
solutions \cite{FK98,GC99}, in the charge simulation form \cite{M95},
gives a structure-preserving scheme for the one-phase Hele--Shaw problem
\cite{SY19} that extends to the two-phase Mullins--Sekerka flow and to
the contact-angle condition \cite{E24}, with structure-preserving
mesh-free and Galerkin boundary element versions for the multiphase flow
with triple junctions \cite{E26,E26-SGBEM}.  A further line discretizes
the energy rather than the equations, in the manner of the implicit time
discretization of mean curvature flow by Luckhaus and Sturzenhecker
\cite{LS95}: Otto \cite{Otto98} identified the one-phase
Mullins--Sekerka flow as the gradient flow of the perimeter with respect
to the Wasserstein metric, Chambolle and Laux \cite{CL21} proved that
the corresponding minimizing-movement scheme, up to a subsequence of the time-step size,
converges to a distributional solution without the perimeter-convergence hypothesis of
\cite{LS95}, and an oriented point-cloud varifold supplies
the perimeter term of that scheme \cite{EI26}; there the interface is carried by the points alone, and
the orientation detects the hidden boundary \cite{R05}, the part of the
interface that disappears when components merge.

\section{Three-phase conservative Mullins--Sekerka problem}
\label{sec:setting}

In this section, we consider a curve network composed of three concentric circles
which separate the whole space $\R^2$ into three phases.
For $r > 0$, let $\ball{r} := \{\bv x\in\R^2 \mid |\bv x|<r\}$, and fix $0<R_1<R_2<R_3$. Set
\begin{equation}\label{eq:phase-partition}
 \begin{aligned}
 \region{3}& := \ball{R_1}\cup(\R^2\setminus\overline{\ball{R_3}}),\\
 \region{1}& := \ball{R_2}\setminus\overline{\ball{R_1}},\\
 \region{2}& := \ball{R_3}\setminus\overline{\ball{R_2}},\qquad
 \interface{j}=\partial \ball{R_j}\quad(j=1,2,3).
 \end{aligned}
\end{equation}
In this case, the phase $3$ has two disconnected components;
the three circles separate the phase into pairs $(3,1)$, $(1,2)$, and $(2,3)$ in order of increasing radius.
Each circle $\interface{j}$ is oriented counter-clockwise, and the direction of its
normal $\normal{j}$ is taken outward so that the signed curvature of
the circles is $\curvature{j}=-1/R_j$.
Write the surface-tension coefficients of $\interface{j}$ as $\tension{j}$.
See Figure~\ref{fig:base-network} for the partition of $\R^2$ by the three concentric circles.

\begin{figure}[htb]
\centering
\begin{tikzpicture}[scale=1.32,>=Stealth]
 \fill[blue!6]    (-2.8,-2.55) rectangle (2.8,2.55);
 \fill[green!10]  (0,0) circle (2.1);
 \fill[orange!16] (0,0) circle (1.4);
 \fill[blue!6]    (0,0) circle (1.0);
 \draw[thick,ccw=0.30] (0,0) circle (1.0);
 \draw[thick,ccw=0.30] (0,0) circle (1.4);
 \draw[thick,ccw=0.30] (0,0) circle (2.1);
 \node[font=\small] at (63:0.78) {$\interface{1}$};
 \node[font=\small] at (63:1.66) {$\interface{2}$};
 \node[font=\small] at (63:2.34) {$\interface{3}$};
 \draw[->] (25:1.0) -- (25:1.30);
 \node[font=\scriptsize] at (36:1.22) {$\normal{1}$};
 \draw[->] (25:1.4) -- (25:1.72);
 \node[font=\scriptsize] at (34:1.63) {$\normal{2}$};
 \draw[->] (25:2.1) -- (25:2.44);
 \node[font=\scriptsize] at (32:2.35) {$\normal{3}$};
 \draw[<->] (180:1.0) -- (180:1.4);
 \node[font=\small,above] at (-1.2,0.03) {$h_C$};
 \draw[->] (0,0) -- (265:1.0) node[pos=0.58,left,font=\small,inner sep=1pt] {$R_1$};
 \draw[->] (0,0) -- (310:1.4) node[pos=0.83,below left,font=\small,inner sep=1pt] {$R_2$};
 \draw[->] (0,0) -- (340:2.1) node[pos=0.80,below,font=\small,inner sep=2pt] {$R_3$};
 \node at (90:0.45) {$\region{3}$};
 \node at (140:1.20) {$\region{1}$};
 \node at (210:1.73) {$\region{2}$};
 \node at (-2.50,2.28) {$\region{3}$};
\end{tikzpicture}
\caption{The concentric three-interface phase partition \eqref{eq:phase-partition}.}
\label{fig:base-network}
\end{figure}

We use the formulation of the multi-phase Mullins--Sekerka flow with
one chemical potential per phase \cite{BGS98}.
We write $\chemicalVec{} = (\chemical{1},\chemical{2},\chemical{3})^\top$ for the chemical
potentials. Then, they need to satisfy
\begin{subequations}\label{eq:fields}
  \begin{align}
 \Delta \chemical{p} = 0 & \qquad\text{in}\quad \region{\ell},\quad p = 1,2,3,\ \ell = 1,2,3,\label{eq:fields-L}\\
 \chemical{1}+\chemical{2}+\chemical{3}=0 &\qquad\text{in}\quad \R^2.\label{eq:fields-S}
  \end{align}
\end{subequations}
Meanwhile, the interface conditions on $\Gamma_j$ read
\begin{subequations}\label{eq:interfaces}
\begin{align}
 \jump{\chemical{p}}&=0, && \qquad p = 1,2,3,\label{eq:interfaces-C}\\
 \chemicalVec{}\cdot\jump{\bvp\chi}&=\tension{j}\curvature{j}, &&\qquad j =1,2,3, \label{eq:interfaces-GT}\\
 \nabla w_p(x) &=O\left(\frac{1}{|x|^2}\right), && \qquad\text{as}\quad |x|\to\infty, \label{eq:interfaced-FX}\\
 V_j\jump{\chi_p}&= - \jump{\nabla \chemical{p}}\cdot\normal{j}, &&\qquad p = 1,2,3,\ j=1,2,3,\label{eq:interfaces-ML}
\end{align}
\end{subequations}
where $\chi_p:\R^2\to \{0,1\}$ denotes the characteristic function of the phase $p$, and $\bvp \chi = (\chi_1,\chi_2,\chi_3)^\top$;
$\jump{q}$ denotes the gap (or jump) of a quantity $q$ across the boundary $\interface{j}$ in the direction $\normal{j}$.
Namely, it is defined by $\jump{q}(\bv x) := \lim_{\varepsilon\downarrow 0}\{q(\bv x + \varepsilon\normal{j}) - q(\bv x - \varepsilon\normal{j})\}$ for $\bv x\in\interface{j}$.

We now introduce the exact solution to the evolution law \eqref{eq:interfaces-ML}.

For each $j$, let $\indexphasem{j}$ be the phase on the inner side of $\interface{j}$
(the side opposite to $\normal{j}$), $\indexphasep{j}$ the phase on the outer side,
and $q(j)$ the remaining phase.
Noting that $\jump{\chi_{\indexphasem{j}}}=-1$, $\jump{\chi_{\indexphasep{j}}}=1$ and
$\jump{\chi_{q(j)}}=0$ on $\interface{j}$,
the motion law \eqref{eq:interfaces-ML} states that
\begin{equation}\label{eq:ML-components}
 V_j=\jump{\nabla\chemical{\indexphasem{j}}}\cdot\normal{j}
 =-\jump{\nabla\chemical{\indexphasep{j}}}\cdot\normal{j},
 \qquad
 \jump{\nabla\chemical{q(j)}}\cdot\normal{j}=0 ,
\end{equation}
The curvature in \eqref{eq:interfaces-GT} is taken in the direction of $\normal{j}$ and is negative for convex
curves, so that on the reference circles $\curvature{j}=-1/R_j$ and
\eqref{eq:interfaces-GT} reads
$\chemical{\indexphasem{j}}-\chemical{\indexphasep{j}}=\tension{j}/R_j$.

Since $\chemical{3}=-\chemical{1}-\chemical{2}$ by \eqref{eq:fields-S},
it is convenient to work with the reduced vector $\bvp W:=(\chemical{1},\chemical{2})^\top$
and to encode the phases by
\begin{equation}\label{eq:phase-vectors}
  \begin{aligned}
 \bvp c_1 & =(1,0)^\top,\qquad \bvp c_2=(0,1)^\top,\qquad \bvp c_3=(-1,-1)^\top,\\
 \chemical{p} &= \bvp c_p^\top\bvp W,
 \qquad\bvp d_j:=\bvp c_{\indexphasem{j}}-\bvp c_{\indexphasep{j}} ,
  \end{aligned}
\end{equation}
so that the Gibbs--Thomson condition \eqref{eq:interfaces-GT} reads
$\bvp d_j^\top\bvp W=-\tension{j}\curvature{j}$, i.e.\
$\bvp d_j^\top\bvp W=\tension{j}/R_j$ on the reference circles $\interface{j}$.
We summarize the relationship between the curve index and introduced indexes as follows:

\begin{equation}\label{eq:iface-table}
\begin{array}{c|ccc|c}
j & \indexphasem{j} & \indexphasep{j} & q(j) &\bvp d_j^\top\\ \hline
 1 & 3 & 1 & 2 & (-2,-1)\\
 2 & 1 & 2 & 3 & (1,-1)\\
 3 & 2 & 3 & 1 & (1,2)
\end{array}
\qquad\quad \bvp d_1+\bvp d_2+\bvp d_3=(0,0)^\top.
\end{equation}

We are now in a position to derive the rigorous evolution law of each circle $\interface{j}$ under the system \eqref{eq:fields} and \eqref{eq:interfaces}.
\begin{proposition}\label{prop:base}
For every $0<R_1<R_2<R_3$, the problem \eqref{eq:fields} and
\eqref{eq:interfaces} has exactly one radially
symmetric solution of the form $\chemical{p}(\bv x)=A_p+B_p\log |\bv x|$ in each of the two annuli $\region{1}$ and $\region{2}$,
with $B_p=0$ in $\region{3}$.
The velocities of $\interface{j}$ are governed by
\begin{equation}\label{eq:base-flow}
 \dot R_j=V_j=-\frac{\alpha}{R_j}\quad(j=1,2,3),
 \qquad
 \alpha=\frac{\tension{1}/R_1+\tension{2}/R_2+\tension{3}/R_3}{2\log(R_3/R_1)}>0 .
\end{equation}
Therefore, all three radii decrease.
Moreover, the two annular areas are conserved, that is,
\begin{equation}\label{eq:areas}
 R_2^2(t)-R_1^2(t)\equiv R_2^2 - R_1^2,\qquad\text{and}\quad R_3^2(t)-R_2^2(t)\equiv R_3^2 - R_2^2 
\end{equation}
hold. The central disk vanishes at a finite time $t_\star$.
\end{proposition}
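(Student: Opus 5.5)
The plan is to reduce the problem to a finite linear system for the coefficients of the radial ansatz and then read off \eqref{eq:base-flow}. I will work with the reduced vector $\bvp W=(\chemical1,\chemical2)^\top$ of \eqref{eq:phase-vectors}. The ansatz is:
\begin{itemize}
\item $\bvp W\equiv\bvp W_0$ in $\ball{R_1}$;
\item $\bvp W=\bvp a^{(1)}+\bvp b^{(1)}\log r$ in $\region1$;
\item $\bvp W=\bvp a^{(2)}+\bvp b^{(2)}\log r$ in $\region2$;
\item $\bvp W\equiv\bvp W_\infty$ outside $\ball{R_3}$.
\end{itemize}
The logarithmic coefficient vanishes in the disk by regularity at the origin. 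It vanishes outside $\ball{R_3}$ because $\nabla\log|\bv x|$ is only $O(1/|\bv x|)$, so \eqref{eq:interfaced-FX} rules it out. Thus $w_3=-\bvp 1^\top\bvp W$ is automatic and there are $12$ scalar unknowns. On the other side there are $12$ scalar conditions: continuity \eqref{eq:interfaces-C} on three circles ($6$), Gibbs--Thomson ($3$), and the condition $\jump{\nabla\chemical{q(j)}}\cdot\normal j=0$ from \eqref{eq:ML-components} ($3$). I will check that the two expressions for $V_j$ in \eqref{eq:ML-components} agree automatically. This holds because the jumps of the three normal derivatives sum to zero by \eqref{eq:fields-S}, and the $q(j)$-jump vanishes.

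The first step uses the flux conditions together with table \eqref{eq:iface-table}. On $\interface1$ ($q=2$) the condition gives $b^{(1)}_2=0$. On $\interface3$ ($q=1$) it gives $b^{(2)}_1=0$. On $\interface2$ ($q=3$) it gives $b^{(1)}_1+b^{(1)}_2=b^{(2)}_1+b^{(2)}_2$. Hence there is a single scalar $\beta$ with $\bvp b^{(1)}=(\beta,0)^\top$ and $\bvp b^{(2)}=(0,\beta)^\top$. Evaluating $V_j$ through \eqref{eq:ML-components} (phase $3$ across $\interface1$, phase $1$ across $\interface2$, phase $2$ across $\interface3$) gives $V_j=-\beta/R_j$ in all three cases.

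The second step determines $\beta$. Add the three Gibbs--Thomson equations $\bvp d_j^\top\bvp W(R_j)=\tension j/R_j$. Using $\bvp d_1+\bvp d_2+\bvp d_3=0$ and continuity, the left side equals
\[
\bvp d_2^\top\bvp b^{(1)}L_1+\bvp d_3^\top\bigl(\bvp b^{(1)}L_1+\bvp b^{(2)}L_2\bigr),
\qquad L_1=\log(R_2/R_1),\quad L_2=\log(R_3/R_2).
\]
Since $\bvp d_2+\bvp d_3=-\bvp d_1=(2,1)^\top$, this reduces to $2\beta(L_1+L_2)=2\beta\log(R_3/R_1)$. Therefore $\beta=\alpha$ as in \eqref{eq:base-flow}, and $\alpha>0$ because all tensions are positive.

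The remaining conditions are linear in $\bvp W_0$ once $\bvp b^{(1)},\bvp b^{(2)}$ are known. Continuity expresses $\bvp a^{(1)},\bvp a^{(2)},\bvp W_\infty$ through $\bvp W_0$ and $\beta$. Two independent Gibbs--Thomson equations remain, say on $\interface1$ and $\interface2$. Their left sides involve $\bvp d_1^\top\bvp W_0$ and $\bvp d_2^\top\bvp W_0$, and $\bvp d_1=(-2,-1)^\top$, $\bvp d_2=(1,-1)^\top$ are linearly independent. So $\bvp W_0$ is uniquely determined, which proves existence and uniqueness within the ansatz.

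For \eqref{eq:areas} I note that $\frac{\dd}{\dd t}R_j^2=2R_j\dot R_j=-2\alpha$ is the same for every $j$, so the differences of the squares are constant. For finite-time extinction, $R_3(t)\le R_3(0)$ while the radii decrease, and keeping only the first tension term gives
\[
\frac{\dd}{\dd t}R_1^2\le-\frac{\tension1}{R_1\log(R_3(0)/R_1)} .
\]
The function $R_1\log(R_3(0)/R_1)$ is bounded on $(0,R_1(0)]$, so $R_1^2$ decreases at a rate bounded below by a positive constant. Hence $R_1^2$ reaches $0$ in finite time $t_\star$; before then all radii stay positive and ordered by \eqref{eq:areas}, so the ODE system remains well defined.

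I expect the main obstacle to be sign bookkeeping: the orientation of $\normal j$, the jump convention, and the vectors $\bvp d_j$. These must combine so that all three velocities come out as $-\beta/R_j$ with one common $\beta$, and so that the summed Gibbs--Thomson identity has no leftover constants. I will double-check each by recomputing $V_j$ from both the $p_-$ and $p_+$ components.
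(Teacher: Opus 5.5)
Your proposal is correct and follows essentially the same route as the paper. The no-flux rows force $\bvp b^{(1)}=(\alpha,0)^\top$ and $\bvp b^{(2)}=(0,\alpha)^\top$, summing the Gibbs--Thomson conditions with $\bvp d_1+\bvp d_2+\bvp d_3=0$ fixes $\alpha$, and area conservation and extinction follow from $\frac{\dd}{\dd t}R_j^2=-2\alpha$ with only the $\tension1$ term kept. Your explicit uniqueness of $\bvp W_0$ via the independence of $\bvp d_1,\bvp d_2$, and your differential inequality for $R_1^2$ in place of the paper's separated-variables integral, are minor refinements of the same argument.
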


\begin{proof}
In the annuli $\region{1}$ and $\region{2}$, we set
$\bvp W(\bv x)=\bvp K_1+\bvp b_1\log|\bv x|$ and $\bvp W(\bv x)=\bvp K_2+\bvp b_2\log|\bv x|$ with constant
vectors $\bvp K_1,\,\bvp K_2,\,\bvp b_1,\,\bvp b_2\in\R^2$ for $\bv x\in\R^2\setminus\{\bv 0\}$;
in the disk part of $\region{3}$, $\bvp W$ is equal to a constant $\bvp X$ by regularity, and in the exterior part of $\region{3}$,
it is equal to a constant by \eqref{eq:interfaced-FX}.
The no-flux relation in \eqref{eq:ML-components} on
$\interface{1}$, $\interface{2}$, $\interface{3}$ gives
$\bvp c_2^\top \bvp b_1=0$, $\bvp c_3^\top(\bvp b_2-\bvp b_1)=0$ and $\bvp c_1^\top\bvp b_2=0$, hence we see that
$\bvp b_1=(\alpha,0)^\top$ and $\bvp b_2=(0,\alpha)^\top$ hold for some $\alpha\in\R$.
The continuity condition \eqref{eq:interfaces-C} implies that
\begin{equation}
  \label{eq:base-1}
  \bvp W = \begin{cases}
    \bvp X &\qquad\text{on}\quad \interface{1},\\
    \bvp X+\alpha \bvp c_1\log\frac{R_2}{R_1} & \qquad\text{on}\quad \interface{2},\\
    \bvp X+\alpha \bvp c_1\log\frac{R_2}{R_1}+\alpha \bvp c_2\log\frac{R_3}{R_2} &\qquad\text{on}\quad \interface{3}.
  \end{cases}
\end{equation}
Noting that
$\bvp d_j^\top\bvp W=\tension{j}/R_j$ on $\interface{j}$ and
$d_1+d_2+d_3=0$ together with $d_2^\top c_1=d_3^\top c_1=1$ and $d_3^\top c_2=2$,
multiplying each equation by $\bvp d_j^\top$ and adding the all conditions of \eqref{eq:base-1} eliminates $X$ and gives
\[
 2\alpha\log(R_3/R_1)
 =\frac{\tension{1}}{R_1}+\frac{\tension{2}}{R_2}+\frac{\tension{3}}{R_3},
\]
which shows the second equation of \eqref{eq:base-flow}.

Meanwhile, a direct calculation shows that
\begin{equation}\label{eq:base-jumps}
  \begin{aligned}
 J_1&:=\jump{\nabla\bvp W}_{\interface{1}}\normal{1}=\frac{\alpha}{R_1}\bvp c_1,\\
 J_2&:=\jump{\nabla\bvp W}_{\interface{2}}\normal{2}=-\frac{\alpha}{R_2}(\bvp c_1- \bvp c_2),\\
 J_3&:=\jump{\nabla\bvp W}_{\interface{3}}\normal{3}=-\frac{\alpha}{R_3}\bvp c_2 .
  \end{aligned}
\end{equation}
Taking $\indexphasem{j}$-component in \eqref{eq:ML-components},
we observe that $V_1=\bvp c_3^\top J_1$, $V_2=\bvp c_1^\top J_2$, $V_3=\bvp c_2^\top J_3$,
all equal to $-\alpha/R_j$, and the first equation of \eqref{eq:base-flow} follows.

It remains to prove \eqref{eq:areas} and the finite-time extinction.
By \eqref{eq:base-flow}, we have
\[
\frac{\dd}{\dd t}R_j^2=2R_j\dot R_j=-2\alpha\qquad(j=1,2,3),
\]
so the three squared radii decrease at the same rate, and their
differences are constant in time, and thus \eqref{eq:areas} holds.
In particular, the ordering $R_1<R_2<R_3$ is preserved, and $R_2$ and
$R_3$ are determined by $R_1$ through
\begin{equation}\label{eq:radii-slaved}
 R_2(t)=\sqrt{R_1(t)^2+A_1},\qquad
 R_3(t)=\sqrt{R_1(t)^2+A_1+A_2},
\end{equation}
where $A_1:=R_2(0)^2-R_1(0)^2>0$ and $A_2:=R_3(0)^2-R_2(0)^2>0$ are
the two conserved annular areas divided by $\pi$.
Substituting \eqref{eq:radii-slaved} into \eqref{eq:base-flow}, we
regard $\alpha$ as a function of $R_1$ alone, and the evolution of the
innermost circle $\interface{1}$ is governed by the scalar equation
$\dot R_1=-\alpha(R_1)/R_1<0$.
Since the numerator of $\alpha$ in \eqref{eq:base-flow} is a sum of
three positive terms, we may drop the terms containing $\tension{2}$
and $\tension{3}$ to obtain the lower bound
\[
 \alpha(R_1)\ \ge\ \frac{\tension{1}}{2R_1\log(R_3/R_1)} .
\]
Separating variables in $\dot R_1=-\alpha(R_1)/R_1$ and writing
$u=R_1$, the time $t_\star$ at which $R_1$ reaches zero is
\[
 t_\star=\int_0^{R_1(0)}\frac{u\,\dd u}{\alpha(u)}
 \le\frac{2}{\tension{1}}\int_0^{R_1(0)}u^2\log\frac{R_3(u)}{u}\,\dd u<\infty ,
\]
where $R_3(u)=\sqrt{u^2+A_1+A_2}$ by \eqref{eq:radii-slaved}.
The last integral is finite because $R_3(u)\le R_3(0)$ and
$u^2\log(R_3(0)/u)$ is bounded on $(0,R_1(0)]$.
Hence, the central disk $\interface{1}$ vanishes at the finite time $t_\star$.
\end{proof}

\begin{remark}
  The motion law of three circles \eqref{eq:base-flow} was shown in \cite[Appendix A]{EGN24},
  although therein, the direction of each normal vector $\normal{j}$ was taken differently, and there was no argument that showed the finite-time disappearance of the smallest circle $\interface{1}$.
\end{remark}

The two-phase thin annulus nevertheless exhibits a singular linearized mechanism: as shown in Section~\ref{sec:twophase},
its fixed-mode matrix has eigenvalues of order $\delta^{-1}$ at every wave number $k\ge2$, including a growing branch with a positive rate relative to the mean thickness.
On the other hand, in the three-concentric-circle case, the mean thickness of the layer increases along the exact solution, as stated below.
\begin{corollary}\label{cor:hC}
The thickness $h:=R_2-R_1$ of the layer $\region{1}$ satisfies
\begin{equation}\label{eq:hC}
 \frac{\dd}{\dd t}\log h=\frac{\alpha}{R_1R_2}>0.
\end{equation}
\end{corollary}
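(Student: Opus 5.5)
The corollary follows directly from the base flow law \eqref{eq:base-flow} of Proposition~\ref{prop:base}; no further analysis of the chemical potentials is needed. I will differentiate $h=R_2-R_1$ along the concentric solution, substitute $\dot R_j=-\alpha/R_j$ for $j=1,2$, and simplify the resulting difference of reciprocals. Positivity will then come from $\alpha>0$, which is already recorded in \eqref{eq:base-flow}.

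Concretely, I will compute
\[
 \dot h=\dot R_2-\dot R_1=-\frac{\alpha}{R_2}+\frac{\alpha}{R_1}
 =\alpha\,\frac{R_2-R_1}{R_1R_2}=\frac{\alpha\,h}{R_1R_2}.
\]
Since $h>0$ on the whole interval of existence (the ordering $R_1<R_2$ is preserved by \eqref{eq:areas}), I can divide by $h$. This gives $\frac{\dd}{\dd t}\log h=\dot h/h=\alpha/(R_1R_2)$, which is \eqref{eq:hC}.

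For the sign, the numerator of $\alpha$ is a sum of positive terms because the tensions are positive. The denominator $2\log(R_3/R_1)$ is positive because $R_3>R_1$. Hence $\alpha>0$, and $\alpha/(R_1R_2)>0$.

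The only point needing care is that $h$ stays positive, so that the logarithm makes sense for $t<t_\star$. This is immediate from the conservation of $R_2^2-R_1^2>0$ in \eqref{eq:areas}. I do not expect any real obstacle.
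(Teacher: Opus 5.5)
Your proposal is correct and is the paper's proof: the same one-line computation $\dot h=\alpha(1/R_1-1/R_2)=\alpha h/(R_1R_2)$ from \eqref{eq:base-flow}. Your remarks that $h>0$ by \eqref{eq:areas} and that $\alpha>0$ only spell out what the paper leaves implicit.
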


\begin{proof}
By \eqref{eq:base-flow}, we have
$\dot h=\alpha(1/R_1-1/R_2)=\alpha h/(R_1R_2)$.
\end{proof}

\section{Fourier linearization and the mode matrix}\label{sec:linearization}

The concentric solution of Section~\ref{sec:setting} is radially
symmetric at every instant, and its layer $\region{1}$ only widens
(Corollary~\ref{cor:hC}); no such evolution can bring the two circles
bounding the layer closer to each other.  By itself it therefore says
nothing about the question of Figure~\ref{fig:question}(c), whether a
modulation of the thickness is amplified.  The first step towards an
answer is to describe the flow in a neighborhood of the exact solution,
that is, to linearize it.

Within a single mode $\cos k\theta$ the perturbed potentials are
harmonic functions with explicit radial profiles, and the linearized
interface conditions involve only their values and radial derivatives on
the three circles.  The Dirichlet-to-Neumann maps convert the harmonic
problem into a linear system for six scalar Fourier coefficients of the
one-sided reduced-potential boundary values
(Proposition~\ref{prop:sixtrace}); solving it gives the mode matrix
$M_k$ of the Introduction.  The last subsection passes to the
coordinates adapted to the layer and reads off the gap row of $M_k$,
whose rates are the ones compared in Theorem~\ref{thm:main}(c).

\subsection{Mode ansatz and linearized interface conditions}

We perturb the three circles radially.
To this end, let $\bvp a:[0,\infty)\ni t\mapsto (a_1(t),a_2(t),a_3(t))^\top\in \R^3$ and set
\begin{equation}\label{eq:ansatz}
 r_j(\theta,t) := R_j(t)+a_j(t)\cos k\theta,\qquad j=1,2,3,\quad k\ge1,
\end{equation}
and write the perturbed potentials as

\begin{equation}\label{eq:ansatz-2}
\chemical{p}(\bv x)=\chemical{p}^0(|\bv x|)+\hat w_p(|\bv x|,t)\cos k\theta+O(|\bvp a|^2),
\end{equation}
where $\chemical{p}^0$ is the concentric solution derived in Proposition~\ref{prop:base};
the second term denotes the first-order perturbation of the chemical potential induced by the interface amplitudes $\bvp a$.
The third term will be discarded in the formal amplitude expansion.

\begin{figure}[htb]
\centering
\begin{tikzpicture}[scale=0.80,>=Stealth]
 \def\Ra{1.00}\def\Rb{1.40}\def\Rc{2.05}\def\ab{0.30}
 \draw[dashed,gray] (0,0) circle (\Ra);
 \draw[dashed,gray] (0,0) circle (\Rb);
 \draw[dashed,gray] (0,0) circle (\Rc);
 \draw[thick] plot[domain=0:360,samples=181,smooth]
  ({(\Rb+\ab*cos(3*\x))*cos(\x)},{(\Rb+\ab*cos(3*\x))*sin(\x)});
 \fill (0,0) circle (0.025);
 \draw[thin,gray!70] (0,0) -- (2.55,0);
 \node[font=\scriptsize,right] at (2.55,0) {$\theta=0$};
 \draw[thin] (\Rb,-0.08) -- (\Rb,0.08);
 \draw[thin] (\Rb+\ab,-0.08) -- (\Rb+\ab,0.08);
 \draw[<->,thin] (\Rb,0) -- (\Rb+\ab,0);
 \node[font=\scriptsize,above=1.5pt] at (\Rb+0.5*\ab,0) {$a_2$};
 \coordinate (P) at ({\Rb*cos(60)},{\Rb*sin(60)});
 \coordinate (Q) at ({(\Rb-\ab)*cos(60)},{(\Rb-\ab)*sin(60)});
 \draw[thin,gray!70] (0,0) -- ({2.4*cos(60)},{2.4*sin(60)});
 \node[font=\scriptsize,anchor=south west] at ({2.4*cos(60)},{2.4*sin(60)}) {$\theta=\pi/3$};
 \draw[<->,thin] (Q) -- (P);
 \node[font=\scriptsize,anchor=south west] at ({1.50*cos(60)},{1.50*sin(60)}) {$-a_2$};
 \node[font=\scriptsize,gray] at ({0.78*cos(225)},{0.78*sin(225)}) {$R_1$};
 \node[font=\scriptsize,gray] at ({1.20*cos(225)},{1.20*sin(225)}) {$R_2$};
 \node[font=\scriptsize,gray] at ({2.28*cos(225)},{2.28*sin(225)}) {$R_3$};
 \node[font=\small] at (0,-2.55) {(a) $r_2=R_2+a_2\cos3\theta$};
\end{tikzpicture}%
\hspace{0.9cm}%
\begin{tikzpicture}[scale=0.80]
 \filldraw[even odd rule,fill=orange!16,draw=none]
  plot[domain=0:360,samples=181,smooth]
   ({(1.40+0.16*cos(3*\x))*cos(\x)},{(1.40+0.16*cos(3*\x))*sin(\x)}) -- cycle
  plot[domain=0:360,samples=181,smooth]
   ({(1.00+0.16*cos(3*\x))*cos(\x)},{(1.00+0.16*cos(3*\x))*sin(\x)}) -- cycle;
 \draw[dashed,gray] (0,0) circle (1.00);
 \draw[dashed,gray] (0,0) circle (1.40);
 \draw[dashed,gray] (0,0) circle (2.05);
 \draw[thick] plot[domain=0:360,samples=181,smooth]
  ({(1.00+0.16*cos(3*\x))*cos(\x)},{(1.00+0.16*cos(3*\x))*sin(\x)});
 \draw[thick] plot[domain=0:360,samples=181,smooth]
  ({(1.40+0.16*cos(3*\x))*cos(\x)},{(1.40+0.16*cos(3*\x))*sin(\x)});
 \node[font=\small] at (0,-2.55) {(b) $a_1=a_2$\ ($g=0$)};
\end{tikzpicture}%
\hspace{0.9cm}%
\begin{tikzpicture}[scale=0.80]
 \filldraw[even odd rule,fill=orange!16,draw=none]
  plot[domain=0:360,samples=181,smooth]
   ({(1.40-0.13*cos(3*\x))*cos(\x)},{(1.40-0.13*cos(3*\x))*sin(\x)}) -- cycle
  plot[domain=0:360,samples=181,smooth]
   ({(1.00+0.13*cos(3*\x))*cos(\x)},{(1.00+0.13*cos(3*\x))*sin(\x)}) -- cycle;
 \draw[dashed,gray] (0,0) circle (1.00);
 \draw[dashed,gray] (0,0) circle (1.40);
 \draw[dashed,gray] (0,0) circle (2.05);
 \draw[thick] plot[domain=0:360,samples=181,smooth]
  ({(1.00+0.13*cos(3*\x))*cos(\x)},{(1.00+0.13*cos(3*\x))*sin(\x)});
 \draw[thick] plot[domain=0:360,samples=181,smooth]
  ({(1.40-0.13*cos(3*\x))*cos(\x)},{(1.40-0.13*cos(3*\x))*sin(\x)});
 \node[font=\small] at (0,-2.55) {(c) $a_2=-a_1$\ ($m=0$)};
\end{tikzpicture}
\caption{Fourier perturbation of the concentric state ($k=3$); dashed:
reference circles. (a) The ansatz \eqref{eq:ansatz} for $\interface{2}$ alone:
$a_2$ is the signed radial displacement from $R_2$, outward at $\theta=0$ and
inward at $\theta=\pi/3$. (b),\,(c) $\interface{1}$ and $\interface{2}$
perturbed in the two coordinate directions of \eqref{eq:mg}.}
\label{fig:modes}
\end{figure}

\begin{lemma}[Mode separation and mode shapes]\label{lem:modes}
The linearization about the concentric solution closes exactly within
each Fourier mode $k\ge1$, and within the $\cos$ and $\sin$ families
separately; the $\sin$ family satisfies the same equations by rotation equivariance.
The radial profiles solve
$$\hat w_p''+\frac1r\hat w_p'-\frac{k^2}{r^2}\hat w_p=0,$$
and hence, it holds that 
\begin{equation}\label{eq:mode-shapes}
 \hat w_p=
 \begin{cases}
 \alpha_p r^k &\qquad\text{in the bounded component of }\region{3},\\
 \alpha_p r^k+\beta_p r^{-k} &\qquad\text{in}\quad\region{1}\cup\region{2},\\
 \beta_p r^{-k} &\qquad\text{in the unbounded component of }\region{3}.
 \end{cases}
\end{equation}
\end{lemma}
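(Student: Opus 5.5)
The plan is to linearize the system \eqref{eq:fields}--\eqref{eq:interfaces} about the concentric solution of Proposition~\ref{prop:base} and to check that every linearized condition is rotation-equivariant. First I will transfer the interface conditions from the perturbed curves $r_j=R_j+a_j\cos k\theta$ back to the reference circles $\interface{j}$ by Taylor expansion in the radial variable. A value $w_p(r_j,\theta)$ becomes $w_p^0(R_j)+\partial_rw_p^0(R_j)\,a_j\cos k\theta+\hat w_p(R_j)\cos k\theta+O(|\bvp a|^2)$, and radial derivatives are handled the same way using $\partial_r^2w_p^0$.

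Next I will expand the geometric quantities to first order. The curvature of the perturbed circle is $\curvature{j}=-1/R_j+(1-k^2)a_j\cos k\theta/R_j^2+O(|\bvp a|^2)$. The normal is $\normal{j}=\bv e_r+O(a_j)\,\bv e_\theta$, and its first-order $\bv e_\theta$ part multiplies $\partial_\theta w_p^0=0$, so it drops out. The normal velocity is $V_j=\dot R_j+\dot a_j\cos k\theta$.

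Because the base state is radial, every coefficient multiplying an unknown in the linearized jump, Gibbs--Thomson and motion conditions is independent of $\theta$. The only angular dependence therefore comes from the forcing $a_j\cos k\theta$ and its time derivative. Matching coefficients of $\cos k\theta$ gives a closed system for the profiles $\hat w_p$ and the amplitudes $\bvp a$ with no other harmonics. By uniqueness of the resulting boundary-value problem, the $\sin k\theta$ part and the other modes then vanish. Rotating by $\theta\mapsto\theta-\pi/(2k)$ maps the cosine family onto the sine family and leaves the base state invariant, which yields the same equations for the sine family. The constraint \eqref{eq:fields-S} is linear, so it also holds for the perturbations, $\sum_p\hat w_p=0$.

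For the profiles, substituting $\hat w_p(r)\cos k\theta$ into $\Delta w_p=0$ gives the Euler equation $\hat w_p''+\hat w_p'/r-k^2\hat w_p/r^2=0$, with fundamental solutions $r^{\pm k}$ for $k\ge1$. In the bounded component of $\region{3}$, regularity at the origin removes the $r^{-k}$ term. In the unbounded component, the decay condition \eqref{eq:interfaced-FX} removes $r^{k}$, since $\nabla(r^k\cos k\theta)$ grows like $r^{k-1}$ and $k\ge1$. Both solutions remain in the annuli, which gives \eqref{eq:mode-shapes}. The coefficients are understood separately in each region, and the notation $\alpha_p,\beta_p$ is per region.

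I expect the main obstacle to be the careful justification that the transferred conditions contain no cross-mode terms. Here I will verify explicitly that $\partial_r w_p^0$ and $\partial_r^2 w_p^0$ are constant on each circle, and that the first-order normal tilt contributes nothing because the base field has no angular gradient.
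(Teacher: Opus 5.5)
Your proposal is correct and follows essentially the paper's route. The radial base state makes the linearized conditions rotation-equivariant, so the $\cos k\theta$ and $\sin k\theta$ families are each invariant; substituting into $\Delta w_p=0$ gives the Euler equation with solutions $r^{\pm k}$; and regularity at the origin and the decay condition \eqref{eq:interfaced-FX} discard $r^{-k}$ and $r^{k}$ respectively. The differences are cosmetic. The paper derives the radial equation from the harmonic polynomial $\operatorname{Re}(x+iy)^k$ and solves it in the variable $\log r$, whereas you do the interface linearization up front (the paper does it after the lemma) and make explicit the uniqueness that kills the off-mode components, which the paper leaves implicit (for $k'\ge1$ this is Lemma~\ref{lem:global-invertibility}, and the radial mode needs a short separate check).
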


\begin{proof}
The base state and its linearized boundary coefficients are radial,
so Fourier orthogonality makes every sine and cosine mode invariant,
and a rotation carries one family to the other.

\pstep{1}{Radial equation}
Write $\cos k\theta=P_k(x,y)/r^k$ with $P_k:=\operatorname{Re}(x+iy)^k$,
a harmonic polynomial, homogeneous of degree $k$, so that
$x\partial_xP_k+y\partial_yP_k=kP_k$.  For a radial factor $u(r)$ one has
$\nabla u=u'(r)\,(x,y)/r$ and $\Delta u=u''+u'/r$, hence on $\R^2\setminus\{0\}$
\[
 \Delta(uP_k)=\Bigl(u''+\frac{u'}r\Bigr)P_k+\frac{2u'}{r}\,(x\partial_x+y\partial_y)P_k+u\,\Delta P_k
 =\Bigl(u''+\frac{(2k+1)u'}{r}\Bigr)P_k .
\]
With $u=\hat w_pr^{-k}$ this is
\[
 \Delta\bigl(\hat w_p(r)\cos k\theta\bigr)
 =\Bigl(\hat w_p''+\frac1r\hat w_p'-\frac{k^2}{r^2}\hat w_p\Bigr)\cos k\theta .
\]
The perturbed potential is harmonic in each region, so the coefficient
of $\cos k\theta$ vanishes: this is the displayed radial equation.

\pstep{2}{Mode shapes}
With $t=\log r$ and $v(t)=\hat w_p(e^t)$ the radial equation becomes
$v''=k^2v$.  Put $f:=v'+kv$; then $f'=k f$, so $f=Ce^{kt}$, and
$(e^{kt}v)'=e^{kt}f=Ce^{2kt}$ gives $e^{kt}v=\frac{C}{2k}e^{2kt}+D$, i.e.\
$\hat w_p=\frac C{2k}r^k+Dr^{-k}$.  Boundedness near the origin forces
$D=0$ in the disk, and boundedness at infinity, \eqref{eq:interfaced-FX},
forces the coefficient of $r^k$ to vanish in the exterior; this is
\eqref{eq:mode-shapes}.
\end{proof}

\begin{remark}[Exclusion of the mode $k=0$]\label{rem:kzero}
(i) The mode $k=0$ is not a shape perturbation.
Displacing the three circles by constants, that is $r_j=R_j+a_j$, produces again three concentric circles;
another member of the family of exact solutions of Proposition~\ref{prop:base} with different initial data:
two of the three directions $\bvp a\in\R^3$ change the conserved annular areas, and
the third, tangent to the orbit, moves the state along the same orbit,
equivalently shifts the extinction time $t_\star$.
The layer keeps a uniform thickness, and its evolution is given exactly by Proposition~\ref{prop:base}.
All statements about shape modes therefore start at $k=1$.
\end{remark}

In the sequel, let us derive the linearized counterpart of the interface conditions \eqref{eq:interfaces}
in terms of the amplitude $\bvp a$ and the radial chemical potential $w^0_p$.
Let $c_\theta := \cos k\theta$, $s_\theta := \sin k\theta$, $\rho(\theta):=R_j+a_jc_\theta$ and
$X(\theta):=\rho(\theta)(\cos\theta,\sin\theta)^\top$.
For fixed $k$ and $|a_j|<R_j$, we see that $\rho(\theta)>0$.
The direct differentiation gives
\begin{equation*}
 \begin{aligned}
 X'&=\rho'\bv e_r+\rho \bv e_\theta,
 &\rho'&=-a_jks_\theta,
 &\rho''&=-a_jk^2c_\theta,\\
 |X'|&=\sqrt{\rho^2+(\rho')^2},
 \end{aligned}
\end{equation*}
where $\bv e_r=(\cos\theta,\sin\theta)^\top$ and
$\bv e_\theta=(-\sin\theta,\cos\theta)^\top$.
Let $\phi(\theta)$ be the angle between the tangent vector $X'$ and the unit vector $\bv e_1 = (1,0)^\top$.
Since $\rho>0$, the signed angle from $\bv e_\theta$ to $X'$ is equal to $-\arctan(\rho'/\rho)$, and hence
\begin{equation*}
 \phi(\theta)=\theta+\frac{\pi}{2}-\arctan\frac{\rho'}{\rho},
 \qquad
 \frac{\dd\phi}{\dd\theta}
 =1-\frac{\rho\rho''-(\rho')^2}{\rho^2+(\rho')^2}
 =\frac{\rho^2+2(\rho')^2-\rho\rho''}{\rho^2+(\rho')^2}.
\end{equation*}
Let $s$ be ordinary arclength, so that $\dd s/\dd\theta=|X'|$.
With the outward-normal convention of Section~\ref{sec:setting},
we deduce that
\begin{equation*}
 \curvature{j}
 = -\frac{\dd \phi}{\dd s} = -\frac{\dd \phi}{\dd \theta}\cdot\frac{\dd \theta}{\dd s}
 =-\frac{\rho^2+2(\rho')^2-\rho\rho''}
         {\bigl(\rho^2+(\rho')^2\bigr)^{3/2}}.
\end{equation*}
We expand the numerator and denominator above in terms of the amplitude $a_j$:
\begin{equation*}
 \begin{aligned}
 \rho^2+2(\rho')^2-\rho\rho''
  &=R_j^2+R_ja_j(k^2+2)c_\theta+O(a_j^2),\\
 \bigl(\rho^2+(\rho')^2\bigr)^{3/2}
  &=\bigl(R_j^2+2R_ja_jc_\theta+O(a_j^2)\bigr)^{3/2}
  =R_j^3\left(1+\frac{2a_j}{R_j}c_\theta+O(a_j^2)\right)^{3/2}\\
  &=R_j^3\left(1+\frac32\frac{2a_j}{R_j}c_\theta\right)+O(a_j^2)
  =R_j^3\left(1+\frac{3a_j}{R_j}c_\theta\right)+O(a_j^2).
 \end{aligned}
\end{equation*}
Using $(1+z)^{-1}=1-z+O(z^2)$ yields
\begin{equation}
  \label{eq:curvature-exp}
 \begin{aligned}
 \curvature{j}
 &= -\frac{1}{R_j}\left(\frac{1 + \frac{a_j(k^2 + 2)}{R_j}c_\theta + O(a_j^2)}{1 + \frac{3a_j}{R_j}c_\theta + O(a_j^2)}\right) \\
 &=-\frac1{R_j}
   \left(1+\frac{a_j(k^2+2)}{R_j}c_\theta\right)
   \left(1-\frac{3a_j}{R_j}c_\theta\right)+O(a_j^2)
 =-\frac1{R_j}-\frac{k^2-1}{R_j^2}a_jc_\theta+O(a_j^2).
 \end{aligned}
\end{equation}

The outward unit normal and normal velocity of the curve $\interface{j}$ are
\begin{equation}
  \label{eq:anzatz-3}
 \begin{aligned}
 \normal{j}
 &= \rotation{-\tfrac{\pi}{2}}\left(\frac{\derivTheta{X}}{|\derivTheta{X}|}\right)
 = \frac{\derivTheta{\rho_j}\rotation{-\tfrac{\pi}{2}} \bv e_r + \rho_j\rotation{-\tfrac{\pi}{2}}\bv e_\theta}{\sqrt{\rho^2_j + (\derivTheta{\rho_j})^2}}
 =\frac{\rho_j\bv e_r-(\derivTheta{\rho_j})\bv e_\theta}
         {\sqrt{\rho_j^2+(\derivTheta{\rho_j})^2}}
  =\bv e_r+O(|\bvp a|),\\
 V_j
 &= \derivTime{X}\cdot\normal{j} = \derivTime{\rho_j}\,\bv e_r\cdot\normal{j}
  =\derivTime{R_j}+\derivTime{a_j}c_\theta+O(|\bvp a|^2),
 \end{aligned}
\end{equation}
where $\rotation{\varphi}\in\R^{2\times 2}$ denotes the rotational matrix with the angle $\varphi$ counter-clockwise.

For a smooth radial function $F$, Taylor's theorem on the displaced interface gives
\begin{equation}\label{eq:dkappa}
 F(R_j+a_jc_\theta) = F(R_j)+a_j\,\partial_rF(R_j)c_\theta+O(a_j^2).
\end{equation}
Applying \eqref{eq:dkappa} with $F=\chemical{p}^{0,\pm}$ and
$F=\hat w_p^\pm$ gives the value on the displaced interface:
\begin{equation}
  \label{eq:linearized-w}
  \begin{aligned}
 \chemical{p}^\pm(R_j+a_jc_\theta,\theta)
 &= \chemical{p}^{0,\pm}(R_j + a_jc_\theta) + \hat w_p^\pm(R_j + a_jc_\theta)c_\theta+ O(|\bvp a|^2)\\
 &=\chemical{p}^{0,\pm}(R_j)
   +a_j\partial_r\chemical{p}^{0,\pm}(R_j)c_\theta
   +\left(\hat w_p^\pm(R_j)
     +a_j\partial_r\hat w_p^\pm(R_j)c_\theta\right)c_\theta
   +O(|\bvp a|^2)\\
 &=\chemical{p}^{0,\pm}(R_j)
  +\left(\hat w_p^\pm(R_j)
    +a_j\partial_r\chemical{p}^{0,\pm}(R_j)\right)c_\theta
  +O(|\bvp a|^2).
  \end{aligned}
\end{equation}
Here, we have used $\hat w_p = O(|\bvp a|)$ (recall \eqref{eq:ansatz-2}) to obtain $a_j\partial_r\hat w_p^\pm(R_j)c_\theta^2 = O(|\bvp a|^2)$.

Meanwhile, we compute
\begin{equation*}
 \nabla\chemical{p}^\pm(r,\theta)
 =\left(\partial_r\chemical{p}^{0,\pm}(r)
       +\partial_r\hat w_p^\pm(r)c_\theta\right)\bv e_r
   +\frac{\hat w_p^\pm(r)}{r}\derivTheta{c_\theta}\,\bv e_\theta
   +O(|\bvp a|^2).
\end{equation*}
Since $\hat w_p^\pm=O(|\bvp a|)$, we deduce from the formula \eqref{eq:anzatz-3} for $\normal{j}$ that
\begin{align}
  \label{eq:linearized-gw}
 \nabla\chemical{p}^\pm(\rho_j,\theta)\cdot\normal{j}
 &=\partial_r\chemical{p}^{0,\pm}(\rho_j)
   +\partial_r\hat w_p^\pm(\rho_j)c_\theta
   +O(|\bvp a|^2)\nonumber\\
 &=\partial_r\chemical{p}^{0,\pm}(R_j)
   +a_j\partial_r^2\chemical{p}^{0,\pm}(R_j)c_\theta
   +\bigl[\partial_r\hat w_p^\pm(R_j)
      +a_j\partial_r^2\hat w_p^\pm(R_j)c_\theta\bigr]c_\theta
   +O(|\bvp a|^2)\nonumber\\
 &=\partial_r\chemical{p}^{0,\pm}(R_j)
   +\bigl[\partial_r\hat w_p^\pm(R_j)
      +a_j\partial_r^2\chemical{p}^{0,\pm}(R_j)\bigr]c_\theta
   +O(|\bvp a|^2).
\end{align}
Here, we have invoked \eqref{eq:dkappa} with $F=\partial_r\chemical{p}^{0,\pm}$ and $F=\partial_r\hat w_p^\pm$ together with
$a_j\partial_r^2\hat w_p^\pm = O(|\bvp a|^2)$ to obtain the second equality.

We now take the jump of the value expansion \eqref{eq:linearized-w}.
Since $\jump{\chemical{p}^0}=0$,
for the continuity condition \eqref{eq:interfaces-C},
we require

\begin{equation}
  \label{eq:linearized-C-pre}
\jump{\hat w_p}+a_j\jump{\partial_r\chemical{p}^0}=0.
\end{equation}

For the Gibbs--Thomson condition \eqref{eq:interfaces-GT}, write
$\hat{\bvp w}=(\hat w_1,\hat w_2,\hat w_3)^\top$.
Let $p_-:=\indexphasem{j}$ and $p_+:=\indexphasep{j}$.
Since $\jump{\chi_{p_-}}=-1$ and $\jump{\chi_{p_+}}=1$,
\eqref{eq:interfaces-GT} turns to
$w_{p_-}-w_{p_+}=-\tension{j}\curvature{j}$.
Subtracting the concentric identity
$(w_{p_-}^0-w_{p_+}^0)(R_j)=\tension{j}/R_j$ from
the scalar form of \eqref{eq:interfaces-GT} together with \eqref{eq:linearized-w},
using the curvature expansion \eqref{eq:curvature-exp}, and comparing the coefficients of
$c_\theta$, we obtain
\begin{equation}
  \label{eq:linearized-GT-pre}
 \left.\left(\hat w_{p_-}-\hat w_{p_+}
   +a_j\partial_r(w_{p_-}^0-w_{p_+}^0)\right)\right|_{r=R_j}
 =\frac{\tension{j}(k^2-1)}{R_j^2}a_j.
\end{equation}
Here, we note that the continuity relation \eqref{eq:linearized-C-pre} makes
$\hat w_p+a_j\partial_r w_p^0$ have the same trace from both sides.

For $q=q(j)$, we recall the no-flux identity \eqref{eq:ML-components}, that is, $\jump{\nabla\chemical{q}}\cdot\normal{j}=0$.
We take the jump of the normal derivative expansion \eqref{eq:linearized-gw},
use $\normal{j}=\bv e_r$ on the concentric circles
(by \eqref{eq:anzatz-3}) to obtain
$\jump{\partial_r\chemical{q}^0}=0$ from
\eqref{eq:ML-components}, then subtract this zeroth-order identity
and compare the coefficients of $c_\theta$.
This gives

\begin{equation}
  \label{eq:linearized-FX-pre}
  \jump{\partial_r\hat w_q}+a_j\jump{\partial_r^2\chemical{q}^0}=0.
\end{equation}

For $p_-:=\indexphasem{j}$, the same motion law and
$\jump{\chi_{p_-}}=-1$ give
$V_j=\jump{\nabla\chemical{p_-}}\cdot\normal{j}$.
We use the normal velocity in \eqref{eq:anzatz-3} and the normal
derivative expansion \eqref{eq:linearized-gw},
subtract the base relation $\dot R_j=\jump{\partial_r\chemical{p_-}^0}$, and compare the
coefficients of $c_\theta$ to obtain

\begin{equation}
  \label{eq:linearized-ML-pre}
 \derivTime{a}_j=\jump{\partial_r\hat w_{p_-}}
   +a_j\,\jump{\partial_r^2\chemical{p_-}^0}.
\end{equation}

We summarize the linearized interface condition on the interface $\interface{j}$ as follows:
\begin{subequations}\label{eq:linear-a}
\begin{align}
 \jump{\hat w_p}&=-a_j\,\jump{\partial_r \chemical{p}^0},
 \label{eq:pc}\\
 \bigl(\hat{\bvp w}
       +a_j\partial_r\bvp w^0\bigr)
       \cdot\jump{\bvp\chi}
 &=-\frac{\tension{j}(k^2-1)}{R_j^2}a_j,
 \label{eq:pgt}\\
 \jump{\partial_r\hat w_q}&=-a_j\,\jump{\partial_r^2\chemical{q}^0},
 \label{eq:pfx}\\
 \derivTime{a}_j&=\jump{\partial_r\hat w_{p_-}}
   +a_j\,\jump{\partial_r^2\chemical{p_-}^0}.
 \label{eq:pv}
\end{align}
\end{subequations}

\subsection{Annular Dirichlet-to-Neumann maps and the six-coefficient reduction}
The linearized conditions \eqref{eq:linear-a} involve the perturbed potentials $\hat w_p$ only through their values and radial derivatives on the three circles.
In this subsection, we eliminate the derivatives by means of the Dirichlet-to-Neumann maps of the four radial regions and
reduce the linearized problem to a finite system for six scalar
Fourier coefficients of the one-sided reduced-potential boundary values
at the three circles.  Solving that system gives the mode matrix $M_k$.

\begin{lemma}[Annular Dirichlet-to-Neumann map]\label{lem:dtn}
Let $k\ge1$ be an integer, and let $0<R_a<R_b$ be the inner and outer radii of an annulus.
Let $\Phi(r,\theta)=U(r)\cos(k\theta)$ be a harmonic Fourier mode on the annulus,
where the scalar function $U(r)=A r^k+B r^{-k}$ represents the radial amplitude for some constants $A,B$.
Set $S_a:=U(R_a)$, $S_b:=U(R_b)$, and $L:=\log(R_b/R_a)$.

Then, every pair $(S_a,S_b)^\top\in\mathbb R^2$ determines a unique harmonic Fourier mode of the stated form on the annulus.
For a disk $\ball{R_{in}}$ or an exterior region $\R^2\setminus\ball{R_{out}}$ for some $R_{in},\,R_{out}>0$,
let $U$ denote the radial amplitude of a mode regular at the origin or decaying at infinity,
respectively, and set $S:=U(R_{in})$ in the disk and
$S:=U(R_{out})$ in the exterior.
This single boundary coefficient also determines the corresponding mode uniquely.
The coefficients of the radial derivatives, in the direction of increasing $r$, are
\begin{equation}\label{eq:dtn}
  U'(r) =
\begin{cases}
 \dfrac{k}{R_{in}}S,
   &\text{if }r=R_{in},\\
 \dfrac{k}{R_a \sinh(kL)}(S_b-S_a\cosh(kL)),
   &\text{if }r=R_a,\\
 \dfrac{k}{R_b \sinh(kL)}(S_b\cosh(kL)-S_a),
   &\text{if }r=R_b,\\
 -\dfrac{k}{R_{out}}S,
   &\text{if }r=R_{out}.
\end{cases}
\end{equation}
In particular, the outward-normal derivative coefficients of the annulus are
$-U'(R_a)$ and $U'(R_b)$, respectively.
\end{lemma}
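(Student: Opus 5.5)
The plan is to solve the $2\times2$ linear system for the coefficients $(A,B)$ of $U(r)=Ar^k+Br^{-k}$ explicitly in terms of the boundary values, and then differentiate.

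\emph{Disk and exterior.} On the disk, regularity at the origin rules out $r^{-k}$ (it is unbounded for $k\ge1$), so $U=Ar^k$. The relation $A=S/R_{in}^k$ determines the mode uniquely, and $U'(R_{in})=kAR_{in}^{k-1}=kS/R_{in}$. On the exterior, decay excludes $r^k$, so $U=Br^{-k}$ with $B=SR_{out}^k$, and $U'(R_{out})=-kS/R_{out}$.

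\emph{Annulus, existence and uniqueness.} The conditions $U(R_a)=S_a$ and $U(R_b)=S_b$ form a linear system in $(A,B)$ with matrix
\[
\begin{pmatrix}R_a^k&R_a^{-k}\\ R_b^k&R_b^{-k}\end{pmatrix}.
\]
Its determinant is $(R_a/R_b)^k-(R_b/R_a)^k=-2\sinh(kL)\neq0$ because $L>0$ and $k\ge1$. Hence every pair $(S_a,S_b)$ determines a unique $(A,B)$.

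\emph{Annulus, derivative formulas.} It is cleanest to change variable to $t=\log(r/R_a)\in[0,L]$, in which $U=v(t)$ satisfies $v''=k^2v$ (Lemma~\ref{lem:modes}, Step 2). The solution matching the boundary values is
\[
v(t)=\frac{S_a\sinh(k(L-t))+S_b\sinh(kt)}{\sinh(kL)}.
\]
Since $U'(r)=v'(t)/r$, differentiating gives
\[
v'(0)=\frac{k\bigl(S_b-S_a\cosh(kL)\bigr)}{\sinh(kL)},\qquad v'(L)=\frac{k\bigl(S_b\cosh(kL)-S_a\bigr)}{\sinh(kL)}.
\]
Dividing by $R_a$ and $R_b$ respectively yields the two middle cases of \eqref{eq:dtn}.

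\emph{Outward normals.} On the annulus the outward normal is $-\bv e_r$ at $r=R_a$ and $\bv e_r$ at $r=R_b$. The outward-normal derivative coefficients are therefore $-U'(R_a)$ and $U'(R_b)$.

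No step is a real obstacle; the only care needed is the sign conventions. The most error-prone point is the $\cosh$/$\sinh$ bookkeeping at the inner radius, so I would check the formulas on the constant-ratio case $S_a=S_b$, where they should reduce to $U'(R_a)=-\frac{k}{R_a}\tanh(kL/2)S_a$.
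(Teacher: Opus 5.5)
Your proposal is correct and follows essentially the same route as the paper: the substitution $t=\log(r/R_a)$, the explicit $\sinh$ solution of $v''=k^2v$, differentiation through $U'(r)=v'(t)/r$, and the direct computations with $U=C_{\rm in}r^k$ on the disk and $U=C_{\rm out}r^{-k}$ on the exterior. Your determinant computation $(R_a/R_b)^k-(R_b/R_a)^k=-2\sinh(kL)\neq0$ only makes explicit the uniqueness that the paper reads off directly from the boundary-value formula for $v$.
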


\begin{proof}
By Lemma~\ref{lem:modes}, the annular radial amplitude satisfies
$U'' + r^{-1}U' - k^2r^{-2}U = 0$.
Put $t=\log(r/R_a)$ and $v(t)=U(R_a e^t)$.
We compute $v'(t) = R_a e^t U'(R_a e^t)$, and hence
\begin{equation*}
 v''(t) = R_a e^t U'(R_a e^t) + (R_a e^t)^2 U''(R_a e^t) = r U'(r) + r^2 U''(r) = k^2U(r) = k^2 v(t)
\end{equation*}
on noting that $r = R_a e^t$.
With the boundary conditions $v(0)=S_a$ and $v(L)=S_b$, we obtain
\begin{equation}\label{eq:formula-v}
 v(t)=\frac{\sinh(k(L-t))S_a +\sinh(kt)S_b}{\sinh(kL)}.
\end{equation}
This formula gives the unique solution to $v'' = k^2 v$ for every pair $(S_a,S_b)$;
with the prescribed boundary values, proving the first claim.
The direct differentiation of \eqref{eq:formula-v} and the identity $U'(r)=v'(t)/r$ evaluated at $t=0$ and $t=L$
give the two annular rows of \eqref{eq:dtn} for the cases of $r = R_a$ and $r = R_b$.
Regularity at the origin and decay at infinity leave the radial amplitudes
$U(r)=C_{\rm in}r^k$ in the disk and $U(r)=C_{\rm out}r^{-k}$ in the exterior.
Using their boundary coefficients $S$ gives
\begin{align*}
 U'(R_{in})&=kC_{\rm in}R_{in}^{k-1}
   =\frac{k}{R_{in}}S,
   && S=U(R_{in}) = C_{\rm in}R_{in}^{k},\\
 U'(R_{out})&=-kC_{\rm out}R_{out}^{-k-1}
   =-\frac{k}{R_{out}}S,
   && S=U(R_{out}) =C_{\rm out}R_{out}^{-k}.
\end{align*}
The proof is now complete.
\end{proof}

We now apply the mode-wise Dirichlet-to-Neumann formulas to the linearized interface
conditions \eqref{eq:linear-a}.  For a prescribed circle-amplitude
vector $a$, it is enough to determine the Fourier coefficients of the
reduced potential at the three circles: the Dirichlet-to-Neumann maps then give all
radial derivatives, and the velocity conditions give $\derivTime{a}$.
The next definition names these boundary coefficients and separates
them from the known concentric-state data.

\begin{definition}[Boundary Fourier coefficients and base data]\label{def:traces}
Fix one Fourier mode $k\ge1$ and set
$\hat{\bvp W}(r):=(\hat w_1(r),\hat w_2(r))^\top\in\R^2$.
This is the radial coefficient vector of the reduced potential perturbation;
the angular factor is $\cos(k\theta)$.  At each reference circle
$\interface{j}$, define the coefficient vectors on its inner and outer sides by
\begin{equation*}
 \bvp u_j^\pm:=\hat{\bvp W}(R_j)\big|_{\pm},\qquad
 \bvp v_j^\pm:=\partial_r\hat{\bvp W}(R_j)\big|_{\pm},
 \qquad j=1,\,2,\,3.
\end{equation*}
Set $\bvp u^-:=(\bvp u_1^-,\bvp u_2^-,\bvp u_3^-)\in\R^6$.
In terms of the derivative jumps $J_j$ of \eqref{eq:base-jumps}, we define
\begin{equation}\label{eq:JHG}
 H_j := \jump{\partial_r^2\bvp W^0}_{\interface{j}}=-\frac{J_j}{R_j},\qquad
 G_j := \partial_r\bvp W^0(R_j)\big|_{\rm in},\qquad
 \qquad j=1,\,2,\,3,
\end{equation}
so that $$G_1=0,\quad G_2=\frac{\alpha}{R_2}c_1,\quad G_3=\frac{\alpha}{R_3}c_2.$$
\end{definition}

For fixed reference radii and tensions, $J_j$, $G_j$,
and $H_j$ are supposed to be given vectors in $\R^2$, and
the unknown boundary coefficients are the six components of $\bvp u^-$.

The continuity condition \eqref{eq:pc} first eliminates the outer-side
boundary coefficients:
$\bvp u_j^+=\bvp u_j^--a_jJ_j$ for $j=1,2,3$.
The two annular Dirichlet-to-Neumann maps in Lemma~\ref{lem:dtn}, together with its disk
and exterior formulas, then eliminate all radial-derivative coefficients:
\begin{equation*}
\begin{array}{c|c|c}
 \text{radial region}&\text{boundary coefficients}&\text{radial-derivative coefficients}\\ \hline
 0<r<R_1&\bvp u_1^-&\bvp v_1^-\\
 R_1<r<R_2&(\bvp u_1^+,\bvp u_2^-)&(\bvp v_1^+,\bvp v_2^-)\\
 R_2<r<R_3&(\bvp u_2^+,\bvp u_3^-)&(\bvp v_2^+,\bvp v_3^-)\\
 r>R_3&\bvp u_3^+&\bvp v_3^+
\end{array}
\end{equation*}
\begin{equation}\label{eq:vtraces}
 \bvp v_1^-=\frac{k}{R_1}\bvp u_1^-,\qquad
 \binom{\bvp v_1^+}{\bvp v_2^-}
   =\mathcal N^{(k)}_{R_1R_2}\binom{\bvp u_1^+}{\bvp u_2^-},\qquad
 \binom{\bvp v_2^+}{\bvp v_3^-}
   =\mathcal N^{(k)}_{R_2R_3}\binom{\bvp u_2^+}{\bvp u_3^-},\qquad
 \bvp v_3^+=-\frac{k}{R_3}\bvp u_3^+.
\end{equation}
Here, $\mathcal N^{(k)}_{R_aR_b}$ acts componentwise on the two reduced
chemical potentials.  With $L=\log(R_b/R_a)$, its $4\times4$ matrix is
\begin{equation*}
 \mathcal N^{(k)}_{R_aR_b}
 =\begin{pmatrix}
  -\dfrac{k\cosh(kL)}{R_a\sinh(kL)} & \dfrac{k}{R_a\sinh(kL)}\\[3pt]
  -\dfrac{k}{R_b\sinh(kL)} & \dfrac{k\cosh(kL)}{R_b\sinh(kL)}
 \end{pmatrix}\otimes I_2,
\end{equation*}
where $I_2$ denotes the identity matrix in $\R^{2\times 2}$.
The superscripts $-$ and $+$ in \eqref{eq:vtraces} name the two sides
of a circle; every $\bvp v_j^\pm$ is a derivative in the increasing
$r$ direction, including at an annulus's inner boundary.
The linearized Gibbs--Thomson and no-flux conditions supply two scalar
equations at each circle.  With the phase vectors of
\eqref{eq:phase-vectors}, they read
\begin{align}
 \bvp d_j^\top\bvp u_j^-&=\gamma_j a_j,
 \qquad \gamma_j:=\tension{j}\frac{k^2-1}{R_j^2}-\bvp d_j^\top G_j,
 \label{eq:gtrow}\\
 \bvp c_{q(j)}^\top(\bvp v_j^+-\bvp v_j^-)
 &=-\bvp c_{q(j)}^\top H_j\,a_j,
 \label{eq:fxrow}
\end{align}
for $j=1,2,3$.  Substituting $\bvp u_j^+=\bvp u_j^--a_jJ_j$
and \eqref{eq:vtraces} leaves precisely the six scalar unknowns in
$\bvp u^-$.  Stack the Gibbs--Thomson row and then the no-flux row
for each $j$ to define the $6\times6$ coefficient matrix
$\mathcal K_k(\bvp R)$ and the $6\times3$ shape-forcing matrix
$\mathcal P_k(R,\tensionVec)$:
\begin{equation}\label{eq:KB}
 \mathcal K_k(\bvp R)\bvp u^-
 =\mathcal P_k(R,\tensionVec)\bvp a.
\end{equation}
Equation \eqref{eq:KB} is the finite-dimensional static boundary system
for this Fourier mode.
The remaining three interface equations, the velocity laws \eqref{eq:pv},
then read
\begin{equation}\label{eq:DE}
 \dot a_j=\bvp c_{p_{\rm in}(j)}^\top
       (\bvp v_j^+-\bvp v_j^-)
       +\bvp c_{p_{\rm in}(j)}^\top H_j\,a_j,
 \qquad\text{i.e.}\qquad
 \dot{\bvp a}=\mathcal D_k\bvp u^-+\mathcal E_k\bvp a .
\end{equation}
Here $\mathcal D_k$ is $3\times6$ and $\mathcal E_k$ is $3\times3$;
they collect the coefficients after the same substitutions.

\begin{proposition}[Closed three-amplitude equation]\label{prop:sixtrace}
For every $k\ge1$ and $0<R_1<R_2<R_3$, the mode-$k$ linearized problem
uniquely determines the six boundary coefficients from $\bvp a$ and closes
on the three interface amplitudes:
\begin{equation}\label{eq:Mk}
 \dot{\bvp a}=M_k(\bvp R)\,\bvp a,\qquad
 M_k=\mathcal D_k\mathcal K_k^{-1}\mathcal P_k+\mathcal E_k ,
\end{equation}
where $\bvp R=(R_1,R_2,R_3)=\bvp R(t)$ along the concentric orbit.  The inverse exists by
Lemma~\ref{lem:global-invertibility}.
\end{proposition}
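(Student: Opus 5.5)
The argument is essentially a bookkeeping one, since all the analytic input is already in place; the one substantive point is invertibility of $\mathcal K_k$. I would proceed in three steps.

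\emph{Step 1: the linearized problem becomes a finite linear system.} Fix $\bvp a\in\R^3$. By Lemma~\ref{lem:modes} any mode-$k$ solution of \eqref{eq:linear-a} has the form \eqref{eq:mode-shapes} in each of the four radial regions. By Lemma~\ref{lem:dtn}, this solution is uniquely determined by its boundary coefficients: one vector in the disk, two in each annulus, one in the exterior. This gives eight vectors in $\R^2$, namely $\bvp u_j^\pm$ for $j=1,2,3$.
\begin{itemize}
\item The continuity condition \eqref{eq:pc} expresses $\bvp u_j^+=\bvp u_j^--a_jJ_j$. This leaves the six scalar unknowns $\bvp u^-$.
\item The radial-derivative coefficients are then the linear functions \eqref{eq:vtraces} of $\bvp u^\pm$.
\item Substituting these into the Gibbs--Thomson condition \eqref{eq:pgt} and the no-flux condition \eqref{eq:pfx} gives exactly the two scalar equations \eqref{eq:gtrow}--\eqref{eq:fxrow} at each circle, i.e.\ the square system \eqref{eq:KB}.
\end{itemize}
Conversely, any $\bvp u^-$ solving \eqref{eq:KB} reconstructs potentials satisfying \eqref{eq:pc}, \eqref{eq:pgt} and \eqref{eq:pfx}. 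Here I would check that the reduced two-component vector $\bvp W$ together with $\chemical{3}=-\chemical{1}-\chemical{2}$ encodes all three potentials, so that \eqref{eq:pc} for $p=3$ follows from $p=1,2$. Thus the static problem is equivalent to \eqref{eq:KB}.

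\emph{Step 2: unique solvability (the main obstacle).} I expect this to be the hard step: showing that $\mathcal K_k(\bvp R)$ is nonsingular for every $k\ge1$ and every $0<R_1<R_2<R_3$. The statement defers this to Lemma~\ref{lem:global-invertibility}, which I would invoke. If I had to prove it, I would argue by uniqueness for the homogeneous problem.
\begin{itemize}
\item Set $\bvp a=0$ and take a solution with zero right-hand side. The potentials are then continuous across all circles, the Gibbs--Thomson differences $\bvp d_j^\top\bvp W$ vanish on $\interface{j}$, and the no-flux jumps vanish.
\item Multiply each $\hat w_p$ by its harmonic equation, integrate over the phase regions, and sum. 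The boundary terms at each circle combine into $\sum_p \hat w_p\jump{\partial_r\hat w_p}$.
\item On $\interface{j}$ this sum reduces to $(\hat w_{p_-}-\hat w_{p_+})\,V$-type flux, because the flux of $q(j)$ vanishes and the fluxes of $p_\pm$ are equal and opposite. It therefore vanishes by Gibbs--Thomson.
\item Hence $\int|\nabla\hat w_p|^2=0$, so the $\hat w_p$ are constants. Since the mode has $k\ge1$ angular dependence, they are zero, which gives injectivity.
\end{itemize}

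\emph{Step 3: closing the system.} With $\mathcal K_k$ invertible, set $\bvp u^-=\mathcal K_k^{-1}\mathcal P_k\bvp a$. Insert this into the velocity law \eqref{eq:pv}, written as \eqref{eq:DE} after the same substitutions. This yields \eqref{eq:Mk}, $\dot{\bvp a}=(\mathcal D_k\mathcal K_k^{-1}\mathcal P_k+\mathcal E_k)\bvp a$, with $\bvp R=\bvp R(t)$ evaluated along the concentric orbit.
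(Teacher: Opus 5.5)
Your proposal is correct and follows the paper's proof. You eliminate $\bvp u_j^+$ by continuity and the derivative coefficients by the Dirichlet-to-Neumann maps to obtain the square system \eqref{eq:KB}, invoke Lemma~\ref{lem:global-invertibility} (your Step~2 sketch is the integral form of its energy proof), and substitute into \eqref{eq:DE}; the only slip is that the disk, the two annuli and the exterior carry six boundary vectors $\bvp u_j^\pm$, not eight.
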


\begin{proof}
For each phase $p$, the Fourier coefficient of its chemical-potential
perturbation is $\hat w_p=\bvp c_p^\top\hat{\bvp W}$.
Hence, \eqref{eq:pc} gives
$\bvp u_j^+-\bvp u_j^-=-a_jJ_j$.
The disk, annular, and exterior formulas of Lemma~\ref{lem:dtn}
applied to these boundary values give \eqref{eq:vtraces}.
By \eqref{eq:phase-vectors}, the scalar product on the left of
\eqref{eq:pgt} equals
$-\bvp d_j^\top(\bvp u_j^-+a_jG_j)$.  Thus \eqref{eq:pgt} becomes
$\bvp d_j^\top(\bvp u_j^-+a_jG_j)
=\tension{j}(k^2-1)a_j/R_j^2$, which is \eqref{eq:gtrow}.
The no-flux condition \eqref{eq:pfx} gives \eqref{eq:fxrow} because
$\hat w_{q(j)}=\bvp c_{q(j)}^\top\hat{\bvp W}$.
For a concrete row of the finite system, put
$s_1=\sinh(k\log(R_2/R_1))$ and
$c_1=\cosh(k\log(R_2/R_1))$.  Eliminating $\bvp u_1^+$ in the
first no-flux row gives
\begin{equation*}
 \bvp c_{q(1)}^\top\left[
   \frac{k}{R_1s_1}\bvp u_2^-
   -\frac{k}{R_1}\left(1+\frac{c_1}{s_1}\right)\bvp u_1^-
 \right]
 =-\bvp c_{q(1)}^\top\left(H_1+\frac{kc_1}{R_1s_1}J_1\right)a_1.
\end{equation*}
The other no-flux rows are obtained in the same way.
All $\bvp v_j^\pm$ in these rows are linear in $(\bvp u^-,\bvp a)$
by \eqref{eq:vtraces}; collecting their $\bvp u^-$ and $\bvp a$
coefficients produces \eqref{eq:KB}.
The same substitution in the velocity laws yields \eqref{eq:DE}.
By Lemma~\ref{lem:global-invertibility}, $\mathcal K_k$ is invertible.
Thus $\bvp u^-=\mathcal K_k^{-1}\mathcal P_k\bvp a$ is the unique set of
boundary coefficients, and substitution into \eqref{eq:DE} gives
\eqref{eq:Mk}.
\end{proof}

The invertibility used in Proposition~\ref{prop:sixtrace} follows from
an energy identity for the homogeneous boundary system.

\begin{lemma}[Invertibility of the six-coefficient system]
\label{lem:global-invertibility}
For every integer $k\ge1$ and all $0<R_1<R_2<R_3$, the matrix
$\mathcal K_k(\bvp R)$ in \eqref{eq:KB} is invertible.  Hence, the mode
matrix $M_k(\bvp R)$ is well-defined at every point of the concentric orbit
before extinction.
\end{lemma}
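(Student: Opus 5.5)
Invertibility of the square matrix $\mathcal K_k(\bvp R)$ is equivalent to injectivity, so I would show that the homogeneous system $\mathcal K_k\bvp u^-=0$ (that is, \eqref{eq:KB} with $\bvp a=0$) has only the trivial solution. With $\bvp a=0$, the reduced conditions have a direct meaning. The outer traces equal the inner ones, $\bvp u_j^+=\bvp u_j^-$. The data $\bvp u^-$ then determine, through Lemma~\ref{lem:dtn}, a harmonic mode $\hat{\bvp W}(r)\cos k\theta$ on each of the four radial regions, and this mode is continuous across every circle. The Gibbs--Thomson rows \eqref{eq:gtrow} give $\bvp d_j^\top\hat{\bvp W}=0$ on $\interface{j}$. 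The no-flux rows \eqref{eq:fxrow} give $\jump{\partial_r\hat w_{q(j)}}=0$ on $\interface{j}$. So it suffices to prove that a continuous, piecewise harmonic mode satisfying these homogeneous interface conditions vanishes identically.

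The tool is an energy identity. I would integrate $\sum_{p=1}^3|\nabla\hat w_p\cos k\theta|^2$ over $\R^2$, where $\hat w_3=-\hat w_1-\hat w_2$. This sum is a positive definite quadratic form in $\nabla\hat{\bvp W}$. The integral is finite because the exterior mode decays like $r^{-k}$ with $k\ge1$, and the disk mode is regular at the origin. Green's formula, applied region by region, turns the integral into a sum over the three circles of $-\int_{\interface{j}}\sum_p \hat w_p\jump{\partial_r\hat w_p}$; the boundary term at infinity vanishes by the decay. On each circle I would split the sum over $p$ into the three phases $p_-(j)$, $p_+(j)$, $q(j)$. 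The $q(j)$ term drops out by the no-flux condition. The remaining two terms combine in the same way as the nonlinear motion law \eqref{eq:ML-components}, which forces $\jump{\partial_r\hat w_{p_-}}=-\jump{\partial_r\hat w_{p_+}}$. To get this relation in the linear setting I would use the constraint $\sum_p\hat w_p=0$: it gives $\sum_p\jump{\partial_r\hat w_p}=0$, and together with the $q$-condition this yields the relation. The two remaining terms then collapse to $(\hat w_{p_-}-\hat w_{p_+})\jump{\partial_r\hat w_{p_-}}$, and the homogeneous Gibbs--Thomson condition makes this factor zero. Hence the energy is zero, so $\nabla\hat{\bvp W}\equiv0$, and $\hat{\bvp W}$ is a constant multiple of $\cos k\theta$. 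For $k\ge1$ this is smooth only if it is zero, since a nonzero multiple of $\cos k\theta$ is not continuous at the origin in the disk. The only possibility is $\hat{\bvp W}\equiv0$, hence $\bvp u^-=0$.

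The step I expect to need the most care is the bookkeeping in this identity. The rows of $\mathcal K_k$ are written in terms of one-sided traces, with the outer traces eliminated by $\bvp u_j^+=\bvp u_j^--a_jJ_j$. I must check that at $\bvp a=0$ these rows reduce exactly to the continuity, Gibbs--Thomson and no-flux conditions above, with no stray $H_j$ or $J_j$ terms; they carry a factor $a_j$, so they do vanish. I also need the orientation signs of the normal jumps in Green's formula to match the jump convention of Section~\ref{sec:setting}. Once this holds, the final statement of the lemma is immediate: $\mathcal K_k^{-1}$ exists at every point of the concentric orbit, so $M_k$ in \eqref{eq:Mk} is well-defined at every instant before $t_\star$.
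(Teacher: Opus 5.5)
Your proposal is correct and is essentially the paper's proof: the paper also sets $\bvp a=0$, uses an energy identity $\sum_p E_p=-\sum_j R_j I_j$, and makes each interface sum $I_j$ vanish using the homogeneous Gibbs--Thomson row, the no-flux row and $\sum_p\jump{\partial_r\hat w_p}=0$, exactly as you do. The only difference is that the paper checks the identity algebraically on the six coefficients, with explicit nonnegative forms $Q_L$ (using $\cosh kL>1$), rather than through Green's formula; it remarks that $E_p$ is the Dirichlet energy of $\hat w_p\cos k\theta$ up to the factor $\pi$.
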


\begin{proof}
It suffices to show that $\mathcal K_k\bvp u^-=0$ forces $\bvp u^-=0$,
so let $\bvp a=0$; then $\bvp u_j^+=\bvp u_j^-$ for every $j$.
For $p\in\{1,2,3\}$ and $j\in\{1,2,3\}$ write
$x_{p,j}:=\bvp c_p^\top\bvp u_j^-$ for the boundary coefficient of the
field $\hat w_p$ on $\interface{j}$ and
$J_{p,j}:=\bvp c_p^\top(\bvp v_j^+-\bvp v_j^-)$ for the jump of its
radial derivative, with $\bvp v_j^\pm$ given by \eqref{eq:vtraces}.
Since $c_1+c_2+c_3=0$, we have $\sum_px_{p,j}=0$ and $\sum_pJ_{p,j}=0$.

\pstep{1}{Quadratic forms}
For $L>0$ put
\[
 Q_L(S_a,S_b):=\frac{k}{\sinh kL}\Bigl(\cosh kL\,(S_a^2+S_b^2)-2S_aS_b\Bigr)
 =\frac{k}{\sinh kL}\Bigl((\cosh kL-1)(S_a^2+S_b^2)+(S_a-S_b)^2\Bigr).
\]
Since $\cosh kL>1$, $Q_L\ge0$ with equality only if $S_a=S_b=0$.
With $L_1=\log(R_2/R_1)$, $L_2=\log(R_3/R_2)$, define for each field
\[
 E_p:=k\,x_{p,1}^2+Q_{L_1}(x_{p,1},x_{p,2})+Q_{L_2}(x_{p,2},x_{p,3})+k\,x_{p,3}^2\ \ge 0 ,
\]
which vanishes only if $x_{p,1}=x_{p,2}=x_{p,3}=0$.

\pstep{2}{Energy identity}
Substituting the derivative coefficients \eqref{eq:vtraces}, i.e.\ the
formulas \eqref{eq:dtn} on the disk, the two annuli and the exterior,
into $J_{p,j}$, one verifies by direct expansion the identity
\begin{equation}\label{eq:energy-identity}
 E_p=-\sum_{j=1}^3R_j\,x_{p,j}J_{p,j},
 \qquad\text{hence}\qquad
 \sum_pE_p=-\sum_{j=1}^3R_j\,I_j,\quad
 I_j:=\sum_px_{p,j}J_{p,j}.
\end{equation}
(Each term of $E_p$ is the boundary form $\pm R\,U(R)U'(R)$ of the mode
$U$ of \eqref{eq:dtn} in one region, so $E_p$ is the Dirichlet energy of
$\Phi_p=\hat w_p\cos k\theta$ up to the factor $\pi$; this interpretation
is not needed below.)

\pstep{3}{Interface sums}
Fix $j$ and let $p_\pm=p_\pm(j)$, $q=q(j)$.  The Gibbs--Thomson row
\eqref{eq:gtrow} with $a_j=0$ reads $x_{p_-,j}=x_{p_+,j}$, and the no-flux
row \eqref{eq:fxrow} reads $J_{q,j}=0$.  With $\sum_pJ_{p,j}=0$ this gives
\begin{equation}\label{eq:homogeneous-energy-flux}
 I_j=x_{p_-,j}\bigl(J_{p_-,j}+J_{p_+,j}\bigr)+x_{q,j}J_{q,j}=0 .
\end{equation}

By \eqref{eq:energy-identity} and \eqref{eq:homogeneous-energy-flux},
$\sum_pE_p=0$.  Each $E_p$ is nonnegative, so every $E_p$ vanishes, hence
$x_{p,j}=0$ for all $p$ and $j$.  In particular
$\bvp u_j^-=(x_{1,j},x_{2,j})^\top=0$ for $j=1,2,3$, so $\bvp u^-=0$.
\end{proof}

The lemma gives pointwise invertibility for every strictly separated
geometry; it gives no bound on $\|\mathcal K_k^{-1}\|$ that is uniform
as $R_1\downarrow0$, as two circles coalesce, or as $k\to\infty$.

\begin{lemma}[Scaling and rationality]\label{lem:scaling}
$M_k$ is linear in the tension vector
$\tensionVec=(\tension{1},\tension{2},\tension{3})$ and has
length dimension $-3$:
\begin{equation}\label{eq:scaling}
 M_k(\bvp R,\tensionVec)=
 \frac{1}{R_1^3}\,\widehat M_k(\rho,\tau;\tensionVec),
 \qquad \rho=\frac{R_2}{R_1},\ \tau=\frac{R_3}{R_1}.
\end{equation}
For integer $k$ one has
$\sinh(k\log(R_b/R_a))=\tfrac12\{(R_b/R_a)^k-(R_a/R_b)^k\}$, so with
$e_1:=\rho^k$ and $e_2:=(\tau/\rho)^k$ as independent symbols the
entries of $\widehat M_k$ are rational in
$(k,\rho,\tau,e_1,e_2,\log\tau)$ and linear in $\tensionVec$.
\end{lemma}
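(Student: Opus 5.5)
Both claims follow from a bookkeeping of physical dimensions together with the structure $M_k=\mathcal D_k\mathcal K_k^{-1}\mathcal P_k+\mathcal E_k$ of Proposition~\ref{prop:sixtrace}. I would take them in turn.

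For linearity in $\tensionVec$, first look at the base data. By Proposition~\ref{prop:base}, $\alpha$ is linear in $\tensionVec$. Hence the vectors $J_j$, $G_j$, $H_j$ of \eqref{eq:base-jumps} and \eqref{eq:JHG} are all linear in $\tensionVec$. Next look at the four blocks. The matrix $\mathcal K_k(\bvp R)$ has rows \eqref{eq:gtrow}–\eqref{eq:fxrow} with coefficients in $\bvp u^-$ only; these are the vectors $\bvp d_j$, $\bvp c_{q(j)}$ and the entries of $\mathcal N^{(k)}$, so $\mathcal K_k$ is independent of $\tensionVec$. The same holds for $\mathcal D_k$. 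The matrices $\mathcal P_k$ and $\mathcal E_k$ collect the terms multiplying $a_j$: the $\gamma_j$, the terms $-\bvp c_{q(j)}^\top(H_j+\dots J_j)$, and the $J_j$ arising from eliminating $\bvp u_j^+$. All of these are linear in $\tensionVec$. Therefore $M_k$ is linear in $\tensionVec$.

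For the dimension $-3$, I would use scale invariance rather than count units. Fix $\lambda>0$ and replace $\bvp R$ by $\lambda\bvp R$.
\begin{enumerate}
\item The logarithms are unchanged, so $\alpha\mapsto\lambda^{-1}\alpha$.
\item The base data scale as $J_j,G_j\mapsto\lambda^{-2}$ and $H_j\mapsto\lambda^{-3}$. The coefficients satisfy $\gamma_j\mapsto\lambda^{-2}\gamma_j$, and $\mathcal N^{(k)}\mapsto\lambda^{-1}\mathcal N^{(k)}$.
\item Give the unknowns the weight $\bvp u^-\mapsto\lambda^{-2}\bvp u^-$ (a potential per unit length of amplitude, the same weight as $\gamma_j a_j$). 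Then every Gibbs--Thomson row scales homogeneously by $\lambda^{-2}$, and every no-flux row by $\lambda^{-3}$.
\item Consequently, if $\bvp u^-$ solves \eqref{eq:KB} at $\bvp R$, then $\lambda^{-2}\bvp u^-$ solves it at $\lambda\bvp R$. Uniqueness comes from Lemma~\ref{lem:global-invertibility}.
\item The velocity law \eqref{eq:DE} then gives $\dot{\bvp a}$ scaled by $\lambda^{-3}$, so $M_k(\lambda\bvp R)=\lambda^{-3}M_k(\bvp R)$.
\end{enumerate}
Choosing $\lambda=1/R_1$ yields \eqref{eq:scaling}, with $\widehat M_k(\rho,\tau;\tensionVec)=M_k((1,\rho,\tau),\tensionVec)$.

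For rationality, set $R_1=1$. Every entry of $\mathcal K_k,\mathcal P_k,\mathcal D_k,\mathcal E_k$ is built by field operations from $k$, the radii $1,\rho,\tau$, the tensions, $\alpha$, and $\cosh(kL_i)/\sinh(kL_i)$ and $1/\sinh(kL_i)$, where $L_1=\log\rho$ and $L_2=\log(\tau/\rho)$.
\begin{enumerate}
\item For integer $k$, $\sinh(kL_1)=\tfrac12(e_1-e_1^{-1})$ and $\cosh(kL_1)=\tfrac12(e_1+e_1^{-1})$, with the analogous formulas for $e_2$.
\item $\alpha$ is rational in $\rho,\tau,\log\tau$ and the tensions, since $\log(R_3/R_1)=\log\tau$.
\item $\mathcal K_k^{-1}$ is rational by Cramer's rule, because $\det\mathcal K_k\neq0$. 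So $\widehat M_k$ is rational in $(k,\rho,\tau,e_1,e_2,\log\tau)$.
\end{enumerate}
Linearity in $\tensionVec$ persists because $\mathcal K_k$ and $\mathcal D_k$ do not involve the tensions.

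The main obstacle is the scaling step. It requires checking that each block really carries a single homogeneous weight, in particular the terms involving $H_j$ versus $J_j\mathcal N^{(k)}$ in the no-flux rows. I would verify this row by row on the explicit row displayed in the proof of Proposition~\ref{prop:sixtrace}.
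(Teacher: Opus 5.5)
Your proposal is correct and is essentially the paper's argument: under the dilation $\bvp R\mapsto\lambda\bvp R$ the base coefficient scales as $\lambda^{-1}$ and the boundary coefficients per unit displacement as $\lambda^{-2}$, the velocity rows add one more inverse length, and rationality comes from writing the hyperbolic factors as $\tfrac12(e_i\pm e_i^{-1})$ and inverting $\mathcal K_k$ by Cramer's rule, which Lemma~\ref{lem:global-invertibility} justifies. Your version spells out what the paper leaves implicit: $\mathcal K_k$ and $\mathcal D_k$ do not involve the tensions, so linearity in $\tensionVec$ survives the inversion, and the Gibbs--Thomson and no-flux rows are homogeneous of weight $\lambda^{-2}$ and $\lambda^{-3}$ respectively.
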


\begin{proof}
Under $R_j\mapsto \ell R_j$, the base coefficient \eqref{eq:base-flow}
scales as $\ell^{-1}$ and is linear in $\tensionVec$; thus
the boundary Fourier coefficients per unit displacement
scale as $\ell^{-2}$ and
are linear in $\tensionVec$, while the velocity rows add one
inverse length.  Equations \eqref{eq:dtn}--\eqref{eq:Mk} therefore
give the factor $\ell^{-3}$ in \eqref{eq:scaling}.  Replacing each
hyperbolic factor by its two exponentials makes the
finite boundary-coefficient
system rational in the stated variables, and inversion preserves
rationality wherever the matrix is invertible, which
Lemma~\ref{lem:global-invertibility} guarantees.
\end{proof}

\subsection{Gap coordinates and instantaneous rates}

\begin{definition}[Gap row and instantaneous rates]\label{def:gap}
In the coordinates $(m,g,a_3)$ of \eqref{eq:mg}, that is
$a_1=m-g/2$ and $a_2=m+g/2$, write the gap row of $M_k$ as
\begin{equation}\label{eq:gaprow}
 (M_k\bvp a)_2-(M_k\bvp a)_1=\beta_m\,m+\beta_g\,g+\beta_3\,a_3 .
\end{equation}
The instantaneous pure-gap rate and the relative gap rate are
\begin{equation}\label{eq:rates}
 \lambda_k(\bvp R):=\beta_g,\qquad
 \Lambda_k(\bvp R):=\lambda_k(\bvp R)-\frac{\alpha}{R_1R_2}.
\end{equation}
\end{definition}

For a perturbation with $m=a_3=0$ one has $\dot g=\lambda_kg$ at the
given instant, so $\lambda_k$ is the logarithmic rate of a pure
thickness modulation, and $\Lambda_k$ compares it with the growth rate
\eqref{eq:hC} of the mean thickness.  Both identities are
instantaneous: along the orbit a pure-gap vector does not remain of
that form, and a general perturbation obeys the full row
\eqref{eq:gaprow}.

\begin{proposition}[Translation mode]\label{prop:translation}
It holds that $M_1\,(1,1,1)^\top=0$.
\end{proposition}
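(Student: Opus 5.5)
To prove $M_1(1,1,1)^\top=0$ I will exhibit, for $\bvp a=(1,1,1)^\top$ and $k=1$, an explicit solution of the static system \eqref{eq:KB} and check that the velocity laws \eqref{eq:DE} give $\dot{\bvp a}=0$. By Lemma~\ref{lem:global-invertibility} the boundary coefficients are unique, so exhibiting one solution suffices. The candidate comes from rigid translation. Shifting the whole concentric configuration by $\varepsilon\bv e_1$ changes each circle by $r_j=R_j+\varepsilon\cos\theta+O(\varepsilon^2)$, which is the $k=1$ mode with $a_j=\varepsilon$. The shifted potentials are $\chemical{p}^0(|\bv x-\varepsilon\bv e_1|)$, and their first-order perturbation is $\hat w_p(r)=-\partial_r\chemical{p}^0(r)$ in each region, with angular factor $\cos\theta$. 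Translation invariance of the problem makes this an exact solution, and it has no normal velocity beyond the base motion, so the linearized amplitude rates should vanish.

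The verification runs in order. First, I check that $\hat w_p=-\partial_r w^0_p$ has the admissible mode shapes \eqref{eq:mode-shapes} for $k=1$. In the annuli, $-B_p/r$ is of the form $\beta_p r^{-1}$. In both components of $\region{3}$ the profile is $0$, which is allowed. Second, the continuity condition \eqref{eq:pc} holds identically, since $\jump{\hat w_p}=-\jump{\partial_r w^0_p}$. Third, for the Gibbs--Thomson row \eqref{eq:pgt} the left side equals $(-\partial_r\bvp w^0+\partial_r\bvp w^0)\cdot\jump{\bvp\chi}=0$, and the right side vanishes because $k^2-1=0$. Fourth, for the no-flux condition \eqref{eq:pfx}, note $\jump{\partial_r\hat w_q}=-\jump{\partial_r^2 w^0_q}$, which is exactly $-a_j\jump{\partial_r^2w^0_q}$ with $a_j=1$. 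Fifth, the velocity law \eqref{eq:pv} gives $\dot a_j=-\jump{\partial_r^2w^0_{p_-}}+\jump{\partial_r^2w^0_{p_-}}=0$ for each $j$.

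The last step is consistency with the reduced formulation \eqref{eq:KB}--\eqref{eq:DE}. There $\bvp u_j^-$ is the inner trace of $-\partial_r\bvp W^0$, namely $-G_j$, and the outer traces follow from $\bvp u_j^+=\bvp u_j^--J_j$. I would check that these satisfy \eqref{eq:gtrow} and \eqref{eq:fxrow} with $a_j=1$. This reduces to the identities already used above together with the Dirichlet-to-Neumann relations \eqref{eq:vtraces}. The relations hold because $-B_p/r$ is a genuine $k=1$ harmonic profile, so its radial derivative $B_p/r^2$ is what $\mathcal N^{(1)}$ returns. Then $M_1(1,1,1)^\top=\mathcal D_1\bvp u^-+\mathcal E_1(1,1,1)^\top=0$.

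The main obstacle is sign and side bookkeeping. I must make sure that $G_j$ (the inner-side derivative), $J_j$ and $H_j=-J_j/R_j$ enter \eqref{eq:gtrow} and \eqref{eq:fxrow} with exactly the signs of the translation solution. In particular I will confirm $\partial_r^2 w^0=-B_p/r^2$ so that the $H_j$ terms cancel against the Dirichlet-to-Neumann terms. Working directly with the unreduced conditions \eqref{eq:linear-a}, as in the steps above, avoids most of this.
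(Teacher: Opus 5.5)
Your proposal is correct and follows the paper's proof. The rigid translation gives $\hat w_p=-\partial_r w_p^0$, i.e.\ $\bvp u^-=-G$; this solves \eqref{eq:KB}, the velocity rows \eqref{eq:DE} vanish, and invertibility of $\mathcal K_1$ (Lemma~\ref{lem:global-invertibility}) identifies it as the solution. Checking \eqref{eq:linear-a} directly, where each condition holds identically once $\hat w=-\partial_r w^0$ and $k^2-1=0$, is a more structural version of the paper's direct substitution into \eqref{eq:KB}--\eqref{eq:DE}.
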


\begin{proof}
A rigid translation of the concentric solution by $\varepsilon$ along the
first axis shifts every circle by $\varepsilon\cos\theta$, so $\bvp a=\varepsilon(1,1,1)^\top$
with $k=1$, and perturbs the potential by
$-\varepsilon\,\partial_x\bvp w^0=-\varepsilon\,(\partial_r\bvp w^0)\cos\theta$,
whose boundary Fourier coefficients are $\bvp u_j^-=-G_j$, i.e.
\[
 \bvp u_1^-=0,\qquad \bvp u_2^-=-\frac{\alpha}{R_2}c_1,\qquad
 \bvp u_3^-=-\frac{\alpha}{R_3}c_2 .
\]
Direct substitution of these six coefficients and $\bvp a=(1,1,1)^\top$ into
\eqref{eq:KB} and \eqref{eq:DE} with $k=1$, where
$\sinh L=\frac12(R_b/R_a-R_a/R_b)$ and $\cosh L=\frac12(R_b/R_a+R_a/R_b)$
for $L=\log(R_b/R_a)$, verifies the six static equations and shows that
the three velocity rows vanish.  Since $\mathcal K_1$ is invertible
(Lemma~\ref{lem:global-invertibility}), $\bvp u^-=-G$ is the solution
of \eqref{eq:KB} for $\bvp a=(1,1,1)^\top$, and \eqref{eq:DE} gives
$M_1(1,1,1)^\top=0$.
\end{proof}

\section{A concentric two-phase comparison}\label{sec:twophase}

In this section phase $1$ occupies the annulus
$\ball{R_2}\setminus\overline{\ball{R_1}}$ and phase $3$ occupies
$\ball{R_1}\cup(\R^2\setminus\overline{\ball{R_2}})$; the inner circle
$\interface{1}$ carries the tension $\tension{\rm in}>0$ and the outer
circle $\interface{2}$ the tension $\tension{\rm out}>0$.  With two
phases the zero-sum representation reduces to one scalar potential
$w:=\chemical{3}=-\chemical{1}$, harmonic in each region and continuous
across both circles.  On $\interface{1}$ the inner phase is $3$ and
\eqref{eq:interfaces-GT} reads $2w=\tension{\rm in}/R_1$;
on $\interface{2}$ the inner phase is $1$ and \eqref{eq:interfaces-GT}
reads $-2w=\tension{\rm out}/R_2$. The concentric base state
is $w=a+b\log r$ in the annulus and constant in the disk and in the
exterior, with
\begin{equation}\label{eq:base2}
 b=-\bar\alpha,\qquad
 \bar\alpha=\frac{\tension{\rm in}/R_1+\tension{\rm out}/R_2}{2\log(R_2/R_1)},
 \qquad
 \dot R_j=-\frac{\bar\alpha}{R_j}\quad(j=1,2).
\end{equation}
The opposite Gibbs--Thomson values determine $b$ in \eqref{eq:base2}, and the velocity law gives $\dot R_j$; for equal tensions $\bar\alpha\sim\tensionsym/\delta$.

\begin{proposition}[Exact two-phase Fourier matrix]\label{prop:W5}
Let $g_1:=\dfrac{\tension{\rm in}(k^2-1)}{2R_1^2}$,
$g_2:=\dfrac{\tension{\rm out}(k^2-1)}{2R_2^2}$, $L=\log(R_2/R_1)$,
$s=\sinh kL$, and $c=\cosh kL$.  The mode matrix of the two-phase
annulus is
\begin{equation}\label{eq:W5}
 M^{(2)}_k=
 \begin{pmatrix}
 -\dfrac{kc}{R_1s}\Bigl(g_1+\dfrac{\bar\alpha}{R_1}\Bigr)-\dfrac{kg_1}{R_1}
   +\dfrac{\bar\alpha}{R_1^2}
 & -\dfrac{k}{R_1s}\Bigl(g_2-\dfrac{\bar\alpha}{R_2}\Bigr)\\[3mm]
 -\dfrac{k}{R_2s}\Bigl(g_1+\dfrac{\bar\alpha}{R_1}\Bigr)
 & -\dfrac{kc}{R_2s}\Bigl(g_2-\dfrac{\bar\alpha}{R_2}\Bigr)-\dfrac{kg_2}{R_2}
   +\dfrac{\bar\alpha}{R_2^2}
 \end{pmatrix}.
\end{equation}
\end{proposition}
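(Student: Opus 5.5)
The plan is to redo the linearization of Section~\ref{sec:linearization} directly for the scalar potential $w=\chemical{3}=-\chemical{1}$, using the one-potential structure. The key observation is that with a single scalar field the Gibbs--Thomson law fixes all four one-sided boundary coefficients explicitly in terms of $a_1,a_2$. No analogue of the system \eqref{eq:KB} then has to be inverted, and the matrix is obtained by substituting these traces into the Dirichlet-to-Neumann formulas of Lemma~\ref{lem:dtn} and the velocity law.

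\emph{Step 1 (base data).} For the base state $w^0=a+b\log r$ in the annulus, with $b=-\bar\alpha$ and $w^0$ constant elsewhere, one has the following one-sided values:
\[
\partial_r w^0=-\bar\alpha/r,\qquad \partial_r^2w^0=\bar\alpha/r^2 \quad\text{in the annulus},
\]
and both vanish in the disk and in the exterior. Hence $\jump{\partial_r w^0}=-\bar\alpha/R_1$ and $\jump{\partial_r^2w^0}=\bar\alpha/R_1^2$ on $\interface{1}$, while $\jump{\partial_r w^0}=\bar\alpha/R_2$ and $\jump{\partial_r^2 w^0}=-\bar\alpha/R_2^2$ on $\interface{2}$.

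\emph{Step 2 (traces from \eqref{eq:pc}--\eqref{eq:pgt}).} By \eqref{eq:pc}, the combination $U_j:=\hat w+a_j\partial_r w^0$ has the same trace from both sides of $\interface{j}$. On $\interface{1}$ the pair is $(p_-,p_+)=(3,1)$, so $\hat{\bvp w}\cdot\jump{\bvp\chi}=\hat w_1-\hat w_3=-2\hat w$. Therefore \eqref{eq:pgt} gives $U_1=g_1a_1$. On $\interface{2}$ the pair is $(p_-,p_+)=(1,3)$, and the analogous computation gives $U_2=-g_2a_2$.

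Subtracting $a_j\partial_rw^0$ on each side yields the four traces:
\begin{align*}
\hat w(R_1^-)&=g_1a_1, & \hat w(R_1^+)&=(g_1+\bar\alpha/R_1)a_1,\\
\hat w(R_2^-)&=-(g_2-\bar\alpha/R_2)a_2, & \hat w(R_2^+)&=-g_2a_2.
\end{align*}
These already display the combinations $g_1+\bar\alpha/R_1$ and $g_2-\bar\alpha/R_2$ that appear in \eqref{eq:W5}. There is no no-flux row here, because with two phases there is no third phase $q(j)$.

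\emph{Step 3 (derivatives and velocities).} Lemma~\ref{lem:dtn} converts these traces into radial derivatives:
\begin{itemize}
\item disk: $\hat w'(R_1^-)=kg_1a_1/R_1$;
\item exterior: $\hat w'(R_2^+)=kg_2a_2/R_2$;
\item annulus: the two rows of \eqref{eq:dtn} with $S_a=\hat w(R_1^+)$ and $S_b=\hat w(R_2^-)$, which produce the $c/s$ and $1/s$ coefficients.
\end{itemize}
The velocity law \eqref{eq:pv} uses the inner phase. On $\interface{1}$ this is $w_3=w$, giving
\[
\dot a_1=\jump{\partial_r\hat w}+a_1\bar\alpha/R_1^2 .
\]
On $\interface{2}$ it is $w_1=-w$, giving
\[
\dot a_2=-\jump{\partial_r\hat w}+a_2\bar\alpha/R_2^2 ,
\]
where the base term comes from $-\jump{\partial_r^2 w^0}=\bar\alpha/R_2^2$. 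Inserting the derivatives from Step~3 and collecting the coefficients of $a_1$ and $a_2$ should reproduce \eqref{eq:W5} entry by entry. For instance, the $(1,1)$ entry is
\[
-\frac{kc}{R_1s}\Bigl(g_1+\frac{\bar\alpha}{R_1}\Bigr)-\frac{kg_1}{R_1}+\frac{\bar\alpha}{R_1^2}.
\]

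\emph{Main obstacle.} The only real difficulty is sign bookkeeping: the orientation of the jumps, which phase is $p_-$ on each circle, and the sign flip $w_1=-w$ on $\interface{2}$. I would check the outcome in two ways. First, at $k=1$ we have $g_j=0$, and using $\sinh L$, $\cosh L$ in terms of $R_2/R_1$ the matrix should collapse to the nilpotent form \eqref{eq:twophase-k1}. Second, the translation vector $(1,1)^\top$ should lie in its kernel, as in Proposition~\ref{prop:translation}.
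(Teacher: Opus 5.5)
Your proposal is correct and follows essentially the paper's own proof. Continuity and Gibbs--Thomson give the four one-sided traces $g_1a_1$, $(g_1+\bar\alpha/R_1)a_1$, $-(g_2-\bar\alpha/R_2)a_2$, $-g_2a_2$ explicitly, and the disk, annular and exterior Dirichlet-to-Neumann formulas together with the velocity law for the inner phase ($w_3=w$ on $\interface{1}$, $w_1=-w$ on $\interface{2}$) then yield \eqref{eq:W5} entry by entry, with no linear system to invert.
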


\begin{proof}
The base annular derivative is $-\bar\alpha/r$.  At $R_1$,
\eqref{eq:pgt} on the disk side gives the boundary Fourier
coefficient $g_1a_1$ of $\hat w$, and \eqref{eq:pc} gives the annular
boundary coefficient
$S_1=(g_1+\bar\alpha/R_1)a_1$.  At $R_2$, \eqref{eq:pgt} on the
annular side gives $S_2=-(g_2-\bar\alpha/R_2)a_2$, and \eqref{eq:pc}
gives the exterior boundary coefficient $-g_2a_2$.
The annular Dirichlet-to-Neumann formula
\eqref{eq:dtn} therefore yields
\[
 v_1^+=\frac{k}{R_1s}(S_2-cS_1),\qquad
 v_2^-=\frac{k}{R_2s}(cS_2-S_1),
\]
while the disk and exterior derivatives are $kg_1a_1/R_1$ and
$kg_2a_2/R_2$.  At the inner circle the velocity law
\eqref{eq:pv} uses $\chemical{3}=w$, hence
$V_1=v_1^+-v_1^-+\bar\alpha a_1/R_1^2$; at the outer circle it uses
$\chemical{1}=-w$, hence $V_2=v_2^--v_2^++\bar\alpha a_2/R_2^2$.
Substituting $S_1,S_2$ gives each entry of \eqref{eq:W5}.
\end{proof}

The matrix \eqref{eq:W5} is explicit, and the thin-layer limit of
Theorem~\ref{thm:twophase} can be read off from it directly.  We prove
the theorem here, before the three-phase limit, because the
computation isolates what the two problems share and what separates
them.  With equal tensions the base coefficient \eqref{eq:base2} is
$\bar\alpha\sim\tensionsym/\delta$: the opposite Gibbs--Thomson values
on the two circles force a potential difference of order one across
the annulus, and the factor $\delta^{-1}$ of the annular
Dirichlet-to-Neumann map multiplies this difference.  The proof makes
the resulting order $\delta^{-1}$ of the rates explicit and shows at
the same time that the singular rates belong to the in-phase motion of
the two circles, not to the thickness.  In
Section~\ref{sec:thinlimit} the same factor $\delta^{-1}$ will
multiply a difference of order $\delta$.

\begin{proof}[Proof of Theorem~\ref{thm:twophase}]
By Lemma~\ref{lem:scaling} we may take $R=\tensionsym=1$ and write
$r=R_2=1+\varepsilon$; in general units every rate is multiplied by
$\tensionsym/R^3$ and $\varepsilon=\delta/R$.  Put $g=(k^2-1)/2$,
$s=\sinh(k\log r)$, $c=\cosh(k\log r)$, $\bar\alpha=(1+1/r)/(2\log r)$, and
\[
 A=g+\bar\alpha,\qquad B=\frac g{r^2}-\frac{\bar\alpha}r,\qquad
 H_1=-kg+\bar\alpha,\qquad H_2=-\frac{kg}{r^3}+\frac{\bar\alpha}{r^2},
\]
so that \eqref{eq:W5} reads
$M^{(2)}_k=\begin{pmatrix}-\frac{kc}{s}A+H_1&-\frac ksB\\-\frac{k}{rs}A&-\frac{kc}{rs}B+H_2\end{pmatrix}$.

\pstep{1}{Substitution}
Let
\[
 u:=\frac{\log(1+\varepsilon)}{\varepsilon}\to1,\qquad
 w_2:=\frac{(1+\varepsilon)^k-1-k\varepsilon}{\varepsilon^2}
 =\sum_{j=0}^{k-2}\binom{k}{j+2}\varepsilon^j\to\frac{k(k-1)}2
 \qquad\text{as}\quad\varepsilon\to 0,
\]
and $v:=k+\varepsilon w_2$, $e:=(1+\varepsilon)^k=1+\varepsilon v$,
$W:=v(2+\varepsilon v)=(e^2-1)/\varepsilon$.  Then
\[
 s=\frac{\varepsilon W}{2e},\qquad c=\frac{e^2+1}{2e},\qquad
 \varepsilon\bar\alpha=\frac{r+1}{2ru},
\]
and the singular factor $\varepsilon^{-1}$ of each entry of $M^{(2)}_k$
is now explicit.  Consequently, $\varepsilon\operatorname{tr}M^{(2)}_k$,
$\varepsilon^2\det M^{(2)}_k$, $\varepsilon^2M^{(2)}_k$ and
$\lambda_{\rm inst}$ are rational functions of $(\varepsilon,u,w_2)$
whose denominators ($u$, $W$, $r$, $e$) do not vanish at the limit
point $(0,1,k(k-1)/2)$, where $W=2k$ and $e=r=1$; for instance
\[
 \varepsilon\operatorname{tr}M^{(2)}_k
 =-\varepsilon kg\Bigl(1+\frac1{r^3}\Bigr)
  +\frac{r+1}{2ru}\Bigl(1+\frac1{r^2}\Bigr)
  -\frac{k(e^2+1)}{W}\left(g\Bigl(1+\frac1{r^3}\Bigr)+\frac{(r+1)^2}{2r^3u}\right),
\]
and the determinant, in which the cancellation $c^2-s^2=1$ removes the
apparent fourth-order pole, and $\lambda_{\rm inst}$ have the same form.
The limits are therefore the values at the limit point:
\begin{equation}\label{eq:twophase-limits}
  \begin{aligned}
 &\varepsilon\operatorname{tr}M^{(2)}_k\to-(k^2-1),\quad
 \varepsilon^2\det M^{(2)}_k\to-2(k^2-1),\\
 &\varepsilon^2M^{(2)}_k\to\begin{pmatrix}-1&1\\-1&1\end{pmatrix},\quad
 \lambda_{\rm inst}\to-\frac{k(k^2-1)}2\qquad\text{as}\quad \delta\downarrow 0.
  \end{aligned}
\end{equation}
The last limit is \eqref{eq:twophase-gap}.  Since each of these rational
functions is smooth at the limit point and $u-1$, $w_2-k(k-1)/2$ are
$O(\varepsilon)$, every limit in \eqref{eq:twophase-limits} holds with an
$O(\varepsilon)$ remainder.

\pstep{2}{Eigenvalues}
By \eqref{eq:twophase-limits},
$\varepsilon^2(\operatorname{tr}^2-4\det)\to(k^2-1)(k^2+7)>0$, so for all
small $\varepsilon$ the eigenvalues
$\lambda_\pm=\frac12(\operatorname{tr}\pm\sqrt{\operatorname{tr}^2-4\det})$
are real and distinct, and $\varepsilon\lambda_\pm\to\mu_\pm(k)$, the two
roots of $\mu^2+(k^2-1)\mu-2(k^2-1)$; this is \eqref{eq:thin2}.
Moreover, $\mu_+>1$ because
$(k^2-1)(k^2+7)-(k^2+1)^2=4(k^2-2)>0$ for $k\ge2$.
Since $\dot R_j=-\bar\alpha/R_j$, one has
$\frac{\dd}{\dd t}\log(R_2-R_1)=\bar\alpha/(R_1R_2)$, and
$\varepsilon\bar\alpha/r=(r+1)/(2r^2u)\to1$; hence
$\varepsilon\bigl(\lambda_+-\frac{\dd}{\dd t}\log(R_2-R_1)\bigr)\to\mu_+-1>0$,
which is \eqref{eq:twophase-relative}.

\pstep{3}{Eigenvectors}
Let $M^{(2)}_k\bvp v=\lambda\bvp v$ with $\lambda\in\{\lambda_+,\lambda_-\}$ and
$\bvp v\ne0$.  Multiplying the first row by $\varepsilon^2$ gives
\[
 v_2-v_1=-(\varepsilon^2M_{11}+1)\,v_1-(\varepsilon^2M_{12}-1)\,v_2
         +\varepsilon(\varepsilon\lambda)\,v_1,
 \qquad\text{hence}\qquad
 |v_2-v_1|\le\eta(\varepsilon)\max(|v_1|,|v_2|),
\]
with $\eta(\varepsilon):=|\varepsilon^2M_{11}+1|+|\varepsilon^2M_{12}-1|
+\varepsilon(|\varepsilon\lambda_+|+|\varepsilon\lambda_-|)=O(\varepsilon)$
by \eqref{eq:twophase-limits}.  If $v_1^2+v_2^2=1$ and
$\zeta=\operatorname{sgn}(v_1+v_2)$, then, using
$|v_1+v_2|^2=2-(v_2-v_1)^2$ and $|v_1+v_2|\le\sqrt2$,
\[
 \Bigl(v_1-\frac{\zeta}{\sqrt2}\Bigr)^2+\Bigl(v_2-\frac{\zeta}{\sqrt2}\Bigr)^2
 =2-\sqrt2\,|v_1+v_2|\le(v_2-v_1)^2\le\eta(\varepsilon)^2 .
\]
Thus, both normalized eigenvectors tend to $(1,1)/\sqrt2$ up to sign, and
the gap component $v_2-v_1$ of either mode is $O(\varepsilon)$.

Finally, for $k=1$ one has $g_1=g_2=0$ in \eqref{eq:W5}, and the
identities $c-s=R_1/R_2$ and $c+s=R_2/R_1$ give \eqref{eq:twophase-k1}
directly.  The square of this matrix is zero, and its pure-gap rate
vanishes identically.
\end{proof}

\begin{remark}[Real and distinct eigenvalues]\label{rem:twophase-real}
The proof gives slightly more than Theorem~\ref{thm:twophase}(a)
states.  By \eqref{eq:twophase-limits},
$\varepsilon^2(\operatorname{tr}^2-4\det)\to(k^2-1)(k^2+7)>0$ and
$\varepsilon^2\det M^{(2)}_k\to-2(k^2-1)<0$, so for all sufficiently
small $\delta$, depending on $k$, the two eigenvalues of $M^{(2)}_k$
are real, distinct and of opposite signs, $\lambda_-<0<\lambda_+$:
the thin two-phase annulus has exactly one growing and one decaying
mode at every $k\ge2$.  Moreover, since the limits in
\eqref{eq:twophase-limits} hold with $O(\varepsilon)$ remainders and
the discriminant is positive at the limit point, \eqref{eq:thin2}
holds in the quantitative form
$\lambda_\pm=\frac{\tensionsym}{R^2\delta}\bigl(\mu_\pm(k)+O(\delta)\bigr)$.
\end{remark}

\section{The thin-interlayer limit}\label{sec:thinlimit}

Throughout this section, we fix $k\ge1$ and consider the
frozen family of concentric geometries
\begin{equation}\label{eq:thin-setting}
 R_1=R,\qquad R_2=R+\delta,\qquad R_3=S=\tau R,\qquad \delta\downarrow0,
\end{equation}
with $R>0$ and $\tau>1$ fixed.  The matrix
$M_k^\delta:=M_k(R,R+\delta,S)$ of Proposition~\ref{prop:sixtrace} is
the mode matrix at these radii; it describes the instantaneous
linearization at a fixed geometry and not the evolution along the
concentric orbit.  By \eqref{eq:base-flow} the base coefficient
converges, $\alpha\to\alpha_c$, with $\alpha_c$ as in
\eqref{eq:alphac}, i.e.
$\alpha_c=((\tension1+\tension2)/R+\tension3/S)/(2\log(S/R))$.
This fixed-$k$ limit has $k\delta/R\to0$; no estimate
uniform in $k$ is claimed.

\subsection{Resistor form of the thin-annulus map}

\begin{lemma}[Resistor form]\label{lem:resistor}
Let $L_1=\log(R_2/R_1)$, $s_1=\sinh kL_1$, and let
\begin{equation}\label{eq:D}
 \Delta_1:=\bvp u_2^--\bvp u_1^+
 =\bvp u_2^--\bvp u_1^-+a_1J_1\in\R^2
\end{equation}
be the difference between the Fourier boundary coefficients at the
two ends of the inner annulus.  Then, exactly,
\begin{equation}\label{eq:resistor}
 \bvp v_1^+=\frac{k}{R_1s_1}\,\Delta_1
   -\frac{k}{R_1}\tanh\frac{kL_1}{2}\,\bvp u_1^+,
 \qquad
 \bvp v_2^-=\frac{k}{R_2s_1}\,\Delta_1
   +\frac{k}{R_2}\tanh\frac{kL_1}{2}\,\bvp u_2^- .
\end{equation}
\end{lemma}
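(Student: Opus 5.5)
The identity follows directly from the annular Dirichlet-to-Neumann formulas of Lemma~\ref{lem:dtn}, applied to the inner annulus with $R_a=R_1$, $R_b=R_2$, $L=L_1$. By \eqref{eq:vtraces} the inner-annulus boundary values are $S_a=\bvp u_1^+$ and $S_b=\bvp u_2^-$, acting componentwise on the two reduced potentials. With $c_1=\cosh kL_1$, the second and third rows of \eqref{eq:dtn} give
\[
 \bvp v_1^+=\frac{k}{R_1s_1}\bigl(\bvp u_2^--c_1\bvp u_1^+\bigr),\qquad
 \bvp v_2^-=\frac{k}{R_2s_1}\bigl(c_1\bvp u_2^--\bvp u_1^+\bigr).
\]

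The plan is to rewrite each right-hand side by splitting off the difference $\Delta_1=\bvp u_2^--\bvp u_1^+$. For $\bvp v_1^+$ we write $\bvp u_2^--c_1\bvp u_1^+=\Delta_1-(c_1-1)\bvp u_1^+$. For $\bvp v_2^-$ we write $c_1\bvp u_2^--\bvp u_1^+=\Delta_1+(c_1-1)\bvp u_2^-$. The remaining coefficient is then handled by the half-angle identity
\[
 \frac{\cosh x-1}{\sinh x}=\frac{2\sinh^2(x/2)}{2\sinh(x/2)\cosh(x/2)}=\tanh\frac{x}{2},
\qquad x=kL_1>0,
\]
which turns $\frac{k(c_1-1)}{R_1s_1}$ into $\frac{k}{R_1}\tanh\frac{kL_1}{2}$, and similarly with $R_2$ in place of $R_1$. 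This yields \eqref{eq:resistor} exactly.

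The second expression for $\Delta_1$ in \eqref{eq:D} is just the continuity elimination $\bvp u_1^+=\bvp u_1^--a_1J_1$ derived from \eqref{eq:pc}.

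There is no real obstacle here; the only care needed is bookkeeping. The vectors $\bvp v$ are derivatives in the increasing-$r$ direction, as fixed after \eqref{eq:vtraces}. The boundary value at $R_1$ must be the outer-side coefficient $\bvp u_1^+$, not $\bvp u_1^-$, which is why $J_1$ enters $\Delta_1$. The identity is exact for every $\delta>0$ because $\sinh kL_1\neq0$.
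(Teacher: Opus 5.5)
Your proposal is correct and follows essentially the same route as the paper: apply the annular Dirichlet-to-Neumann formula of Lemma~\ref{lem:dtn} to the inner annulus with boundary coefficients $(\bvp u_1^+,\bvp u_2^-)$, split off $\Delta_1$ via $\bvp u_2^--c_1\bvp u_1^+=\Delta_1-(c_1-1)\bvp u_1^+$ and $c_1\bvp u_2^--\bvp u_1^+=\Delta_1+(c_1-1)\bvp u_2^-$, and then use $(\cosh z-1)/\sinh z=\tanh(z/2)$. Your added remark that the second form of $\Delta_1$ is the continuity elimination $\bvp u_1^+=\bvp u_1^--a_1J_1$ from \eqref{eq:pc} is also correct.
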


\begin{proof}
Lemma~\ref{lem:dtn} applied to the inner annulus with boundary
coefficients $(\bvp u_1^+,\bvp u_2^-)$ gives
$\bvp v_1^+=\frac{k}{R_1s_1}(\bvp u_2^--c_1\bvp u_1^+)$ and
$\bvp v_2^-=\frac{k}{R_2s_1}(c_1\bvp u_2^--\bvp u_1^+)$,
where $c_1=\cosh(kL_1)$.  Substitute
$\bvp u_2^--c_1\bvp u_1^+
=\Delta_1-(c_1-1)\bvp u_1^+$ and
$c_1\bvp u_2^--\bvp u_1^+
=\Delta_1+(c_1-1)\bvp u_2^-$, then use
$(\cosh z-1)/\sinh z=\tanh(z/2)$.
\end{proof}

The first term in each identity carries the thin-annulus factor
$k/(R_1s_1)=\delta^{-1}(1+O(\delta))$ and multiplies the difference
$\Delta_1$ of the two boundary Fourier coefficients.  The remaining
terms are regular since
$\tanh(kL_1/2)=\frac{k\delta}{2R}(1+O(\delta))$.
All orders in $\delta$ below refer to this decomposition.

\subsection{Regularized solvability and the finite limit}

\begin{lemma}[Regularized six-coefficient system]\label{lem:L4A}
Set $X:=\bvp u_2^-$, $Y:=\bvp u_3^-$, and
$U:=\Delta_1/\delta=(\bvp u_2^--\bvp u_1^+)/\delta$;
each is in $\R^2$.  In these variables the six-coefficient system
\eqref{eq:KB} extends analytically to $\delta=0$, and its limiting
coefficient matrix is invertible.  Consequently, the boundary Fourier
coefficients, their radial-derivative coefficients, and the velocity
rows are analytic functions of $\delta$ at $\delta=0$.
\end{lemma}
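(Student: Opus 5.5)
The plan is to rewrite the six-coefficient system \eqref{eq:KB} in the unknowns $(X,Y,U)$, check that every coefficient is analytic in $\delta$ near $0$, and show that the limiting matrix is block-triangular with invertible diagonal blocks. Analyticity of the inverse, and hence of everything built from it, then follows from Cramer's rule.

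\textbf{Substitution.} I express all six traces through the new unknowns:
\[
\bvp u_1^+=X-\delta U,\qquad \bvp u_1^-=X-\delta U+a_1J_1,\qquad \bvp u_2^+=X-a_2J_2,\qquad \bvp u_3^+=Y-a_3J_3 .
\]
For the inner annulus I use the resistor form, Lemma~\ref{lem:resistor}. It gives
\[
\bvp v_1^+=\frac{k\delta}{R_1s_1}\,U-\frac{k}{R_1}\tanh\frac{kL_1}{2}\,\bvp u_1^+,
\]
and the analogous formula for $\bvp v_2^-$. The function $L_1=\log(1+\delta/R)$ is analytic with $L_1/\delta\to1/R$. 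Hence $\sinh(kL_1)/\delta$ is analytic and nonvanishing at $\delta=0$, so $k\delta/(R_1s_1)$ is analytic with limit $1$. Also $\tanh(kL_1/2)$ is analytic and vanishes at $\delta=0$.

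The remaining data are regular as well:
\begin{itemize}
\item the outer-annulus map $\mathcal N^{(k)}_{R_2R_3}$, since $\log(S/(R+\delta))$ stays away from $0$;
\item the disk and exterior maps;
\item the base vectors $J_j,G_j,H_j$ and the coefficients $\gamma_j$, which are analytic because $\alpha$ is an analytic function of $\delta$ with $\alpha\to\alpha_c$.
\end{itemize}
Thus all six rows \eqref{eq:gtrow}--\eqref{eq:fxrow}, and the velocity rows \eqref{eq:DE}, are analytic in $(\delta;X,Y,U,\bvp a)$. No negative power of $\delta$ survives.

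\textbf{Invertibility at $\delta=0$.} I order the unknowns as $X$, then $Y$, then $U$.

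The two Gibbs--Thomson rows at $\Gamma_1$ and $\Gamma_2$ become, at $\delta=0$,
\[
\bvp d_1^\top X=\gamma_1a_1-a_1\bvp d_1^\top J_1,\qquad \bvp d_2^\top X=\gamma_2a_2 .
\]
They involve $X$ only. The vectors $\bvp d_1=(-2,-1)$ and $\bvp d_2=(1,-1)$ are independent, with determinant $3$, so $X$ is determined by $\bvp a$. This is exactly where the three-phase structure enters. In the two-phase case the two rows constrain the same combination of potentials with order-one different right-hand sides, so no $X$ exists and the difference $\Delta_1$ cannot be of order $\delta$.

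The two rows at $\Gamma_3$ involve only $Y$ and $\bvp u_2^+$; the latter is already known. The Gibbs--Thomson row gives $\bvp d_3^\top Y$. In the no-flux row the coefficient of $\bvp c_1^\top Y$ is $-\frac kS\bigl(1+\coth kL_2\bigr)\neq0$. Since $\bvp d_3=(1,2)$ and $\bvp c_1=(1,0)$ are independent, this $2\times2$ block is invertible.

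Finally, $U$ enters only the no-flux rows at $\Gamma_1$ and $\Gamma_2$. There it appears through $\frac1{R}\bvp c_2^\top U$ and $-\frac1R\bvp c_3^\top U$ respectively, with all other terms already determined. Since $\bvp c_2$ and $\bvp c_3$ are independent, $U$ is determined.

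The limiting matrix is therefore block lower-triangular with invertible diagonal blocks, hence invertible. By continuity it stays invertible for small $\delta$, and its inverse is analytic.

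\textbf{Consequences and main obstacle.} Since $(X,Y,U)$ is analytic in $\delta$, the original traces $\bvp u^-$ are analytic too, and so are all $\bvp v_j^\pm$ by the formulas above. The velocity rows \eqref{eq:DE} are analytic as well, because the only singular factor, $k/(R_1s_1)$, always multiplies $\Delta_1=\delta U$.

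The step I expect to need the most care is the bookkeeping of the no-flux rows at $\Gamma_1$ and $\Gamma_2$. One has to verify that, after the resistor substitution, $U$ appears there with exactly the nondegenerate $\bvp c_2$ and $\bvp c_3$ projections claimed, including the signs coming from $\bvp v_1^+$ versus $\bvp v_2^-$. One also has to check that no hidden $\delta^{-1}$ remains through the $a_1J_1$ shift in $\bvp u_1^-$.
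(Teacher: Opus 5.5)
Your proposal is correct and is essentially the paper's proof: the same substitution $\bvp u_1^+=X-\delta U$ combined with the resistor form, the same block-triangular solve at $\delta=0$, and Cramer's rule. The triangular solve is GT$_1$, GT$_2$ with determinant $3$ for $X$, then GT$_3$, FX$_3$ for $Y$, then FX$_1$, FX$_2$ for $U$. The only difference is that the paper also obtains $\det\mathcal K^\delta\ne0$ on all of $[0,S-R)$ via Lemma~\ref{lem:global-invertibility}, whereas your continuity argument near $\delta=0$ is all the statement requires. One harmless slip: at $\delta=0$ the $U$-terms in FX$_1$ and FX$_2$ are $\bvp c_2^\top U$ and $-\bvp c_3^\top U$, since the factor is the limit $k\delta/(R_1s_1)\to1$ you computed yourself, not $\pm\frac1R$ times these; only their nonvanishing and the independence of $\bvp c_2,\bvp c_3$ are used.
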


\begin{proof}
Write $R_2=R+\delta$, $L_1=\log(R_2/R)$, $L_2=\log(S/R_2)$,
$e_1=(R_2/R)^k$, $s_2=\sinh kL_2$, $c_2=\cosh kL_2$, and
$\alpha=\alpha(\delta)$ for the base coefficient \eqref{eq:base-flow} at
the radii $(R,R_2,S)$.  The change of variables is exact:
$\bvp u_1^+=X-\delta U$ and $\bvp u_1^-=X+a_1J_1-\delta U$.

\pstep{1}{Coefficients}
Since $\sinh kL_1=(e_1^2-1)/(2e_1)$ and
$\tanh(kL_1/2)=(e_1-1)/(e_1+1)$, the identities \eqref{eq:resistor}
read
\[
 \bvp v_1^+=(A_1+\delta T_1)\,U-T_1X,\qquad
 \bvp v_2^-=B_1\,U+T_2X,
\]
with
\begin{equation*}
  \begin{aligned}
 A_1 &:=\frac{k\delta}{R\sinh kL_1}=\frac{2k\,e_1}{W(\delta)},\quad
 B_1:=\frac{R}{R_2}A_1,\\
 T_1 &:=\frac kR\,\frac{e_1-1}{e_1+1},\quad
 T_2:=\frac{k}{R_2}\,\frac{e_1-1}{e_1+1},\quad
 W(\delta) :=\sum_{j=0}^{2k-1}\binom{2k}{j+1}\left(\frac{\delta}{R}\right)^{j},
  \end{aligned}
\end{equation*}
where $e_1^2-1=(\delta/R)\,W(\delta)$ is the binomial expansion.
For fixed $k$ all of $e_1$, $W$, $A_1$, $B_1$, $T_1$, $T_2$, $e_2:=(S/R_2)^k$,
$s_2=(e_2^2-1)/(2e_2)$, $c_2=(e_2^2+1)/(2e_2)$ and $\alpha$ are rational
functions of $\delta$ whose denominators do not vanish on
$0\le\delta<S-R$ ($W(0)=2k$, $e_1\ge1$, $e_2>1$), with
$A_1(0)=B_1(0)=1$, $T_1(0)=T_2(0)=0$, $\alpha(0)=\alpha_c$.

\pstep{2}{The system in $(X,Y,U)$}
Substituting $\bvp u_1^\pm$, \eqref{eq:resistor}, the outer Dirichlet-to-Neumann
formulas \eqref{eq:vtraces}, and the base data of
Definition~\ref{def:traces} into the six rows of \eqref{eq:KB} gives, in
the order GT$_1$, FX$_1$, GT$_2$, FX$_2$, GT$_3$, FX$_3$,
\begin{align*}
 -2X_1-X_2+\delta(2U_1+U_2)
   &=\Bigl(\frac{q_1}{R^2}+\frac{2\alpha}{R}\Bigr)a_1,\\
 -\Bigl(T_1+\frac kR\Bigr)X_2+\Bigl(A_1+\delta T_1+\frac{k\delta}{R}\Bigr)U_2
   &=0,\\
 X_1-X_2&=\Bigl(\frac{q_2}{R_2^2}-\frac{\alpha}{R_2}\Bigr)a_2,\\
 \Bigl(\frac{kc_2}{R_2s_2}+T_2\Bigr)(X_1+X_2)-\frac{k}{R_2s_2}(Y_1+Y_2)+B_1(U_1+U_2)
   &=0,\\
 Y_1+2Y_2&=\Bigl(\frac{q_3}{S^2}-\frac{2\alpha}{S}\Bigr)a_3,\\
 \frac{k}{Ss_2}X_1-\frac kS\Bigl(1+\frac{c_2}{s_2}\Bigr)Y_1
   &=-\frac{k\alpha}{Ss_2R_2}\,a_2,
\end{align*}
with $q_j=\tension{j}(k^2-1)$; here $c_2^\top J_1=c_3^\top J_2=0$,
$c_2^\top H_1=c_3^\top H_2=c_1^\top H_3=0$, $d_1^\top J_1=-2\alpha/R$,
$d_2^\top G_2=\alpha/R_2$, $d_3^\top G_3=2\alpha/S$ and
$c_1^\top J_2=-\alpha/R_2$ were used.  The velocity rows \eqref{eq:DE}
become linear in $(X,Y,U,\bvp a)$ with coefficients from the same list.
Thus, \eqref{eq:KB} is equivalent to $\mathcal K^\delta(X,Y,U)^\top=\mathcal P^\delta\bvp a$
with $\mathcal K^\delta$, $\mathcal P^\delta$ rational in $\delta$ on
$[0,S-R)$, and the correspondence $(X,Y,U)\mapsto\bvp u^-$ is a
bijection for every $\delta>0$.

\pstep{3}{Invertibility of $\mathcal K^\delta$ for $0\le\delta<S-R$}
For $\delta>0$ the kernel of $\mathcal K^\delta$ maps injectively into the
kernel of $\mathcal K_k$, which is trivial by
Lemma~\ref{lem:global-invertibility}.  At $\delta=0$ the system reads
$-2X_1-X_2=Aa_1$, $-\frac kRX_2+U_2=0$, $X_1-X_2=Ba_2$,
$\frac{kc}{Rs}(X_1+X_2)-\frac{k}{Rs}(Y_1+Y_2)+U_1+U_2=0$,
$Y_1+2Y_2=(q_3/S^2-2\alpha_c/S)a_3$ and
$\frac{k}{Ss}X_1-\frac kS(1+\frac cs)Y_1=-\frac{k\alpha_c}{SsR}a_2$,
with $s=\sinh(k\log(S/R))$, $c=\cosh(k\log(S/R))$ and $A,B$ as in
\eqref{eq:AB}.  Solving in the order GT$_1$,GT$_2$ (determinant $3$) for
$X$, then FX$_3$,GT$_3$ for $Y$ (the coefficient $\frac kS(1+c/s)$ of $Y_1$
is positive), then FX$_1$ for $U_2$ and FX$_2$ for $U_1$ determines
$(X,Y,U)$ uniquely for every $a$; the explicit solution is
\eqref{eq:limit-X}--\eqref{eq:limit-Y} of Lemma~\ref{lem:L4B}.  Hence, it follows that
$\det\mathcal K^\delta\ne0$ on $[0,S-R)$.

\pstep{4}{Conclusion}
By Cramer's rule, the entries of $(\mathcal K^\delta)^{-1}$ are rational
functions of $\delta$ without poles on $[0,S-R)$, since the entries of
$\mathcal K^\delta$ are and $\det\mathcal K^\delta$ does not vanish there.
Hence, $(X,Y,U)=(\mathcal K^\delta)^{-1}\mathcal P^\delta\bvp a$, the derivative
coefficients $\bvp v_j^\pm$, and the velocity rows
$M_k^\delta\bvp a=\mathcal D^\delta(\mathcal K^\delta)^{-1}\mathcal P^\delta\bvp a+\mathcal E^\delta\bvp a$
are rational functions of $\delta$ without poles on $[0,S-R)$; in
particular they are analytic at $\delta=0$, and their values at
$\delta=0$ are those of the limiting system.
\end{proof}

\begin{corollary}[Finite limit]\label{cor:T1}
The mode matrix $M_k^\delta$ extends to an analytic function of $\delta$ at $\delta=0$.
In particular $M_k^\delta\to M_k^0$ entrywise and
$\delta M_k^\delta\to0$: the three-phase thin-interlayer limit has no
$O(\delta^{-1})$ singularity, in contrast with \eqref{eq:thin2}.
\end{corollary}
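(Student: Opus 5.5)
The corollary follows almost directly from Lemma~\ref{lem:L4A}; the plan is to read the mode matrix off from the regularized system and then check that each ingredient is analytic at $\delta=0$.

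Write the velocity rows \eqref{eq:DE} in the regularized variables $(X,Y,U)$ of Lemma~\ref{lem:L4A}. Then
\[
M_k^\delta\bvp a=\mathcal D^\delta(\mathcal K^\delta)^{-1}\mathcal P^\delta\bvp a+\mathcal E^\delta\bvp a ,
\]
where $\mathcal D^\delta$ and $\mathcal E^\delta$ are the coefficient matrices of the velocity rows after substituting $\bvp u_1^\pm$, the resistor form \eqref{eq:resistor}, the outer Dirichlet-to-Neumann formulas \eqref{eq:vtraces} and the base data of Definition~\ref{def:traces}.

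The first step is to confirm that $\mathcal D^\delta$ and $\mathcal E^\delta$ are rational in $\delta$ with no pole on $[0,S-R)$. The only possibly singular factor is $k/(R_1s_1)$, and in the regularized variables it always occurs as
\[
\frac{k}{R_1s_1}\,\Delta_1=\frac{A_1}{R}\,U ,
\]
with $A_1$ the regular coefficient of Lemma~\ref{lem:L4A}. For the same reason, $\bvp v_1^+$ and $\bvp v_2^-$ are affine in $(X,U)$ with regular coefficients $A_1,B_1,T_1,T_2$. The remaining quantities are also regular:
\begin{itemize}
\item $\bvp v_1^-=(k/R)\bvp u_1^-$, with $\bvp u_1^-=X+a_1J_1-\delta U$;
\item the outer annulus and exterior coefficients, since $s_2$ stays bounded away from zero;
\item $H_j$, $J_j$ and $G_j$, which are rational in $\alpha(\delta)$ and $R_j$.
\end{itemize}
Finally, $\alpha(\delta)$ is analytic with $\alpha(0)=\alpha_c$, because $\log(S/R)>0$.

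The second step combines these facts. The entries of $(\mathcal K^\delta)^{-1}\mathcal P^\delta$ are pole-free rational functions by Lemma~\ref{lem:L4A}, Step~4. Products and sums of pole-free rational functions are again pole-free, so every entry of $M_k^\delta$ is rational in $\delta$ without pole on $[0,S-R)$, and hence analytic at $0$.

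It remains to check that this function coincides with the mode matrix of Proposition~\ref{prop:sixtrace} for $\delta>0$. This holds because the change of variables $(X,Y,U)\mapsto\bvp u^-$ is a bijection there, and the system \eqref{eq:KB} has a unique solution.

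Analyticity gives $M_k^\delta\to M_k^0$ entrywise, and boundedness near $0$ then gives $\delta M_k^\delta\to0$. The contrast with \eqref{eq:thin2} follows from Theorem~\ref{thm:twophase}: there $\delta\lambda_+\to\sigma\mu_+(k)/R^2\ne0$ for $k\ge2$, whereas here $\delta$ times any eigenvalue tends to $0$.

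The main obstacle is the bookkeeping in the first step. One must make sure that no factor $1/s_1$ enters through the velocity rows except in the combination $\Delta_1/s_1$. This is the place where a stray $\bvp u_1^+/s_1$ could appear. I would check it by inspecting the velocity rows at $\interface{1}$ and $\interface{2}$ (jumps $\bvp v_1^+-\bvp v_1^-$ and $\bvp v_2^+-\bvp v_2^-$), where the resistor form already isolates the singular part as $\Delta_1$.
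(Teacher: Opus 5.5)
Your proposal is correct and follows the paper's route. The paper's proof is a one-line appeal to Lemma~\ref{lem:L4A}, whose Step~4 already states that the velocity rows $\mathcal D^\delta(\mathcal K^\delta)^{-1}\mathcal P^\delta\bvp a+\mathcal E^\delta\bvp a$ are rational in $\delta$ without poles on $[0,S-R)$; your first two steps unpack Steps~2 and~4 of that lemma, and the resistor form of Lemma~\ref{lem:resistor} is what rules out a stray $1/s_1$. One harmless slip: since $A_1=k\delta/(R\sinh kL_1)$ and $R_1=R$, the correct identity is $\frac{k}{R_1s_1}\Delta_1=A_1U$, not $\frac{A_1}{R}U$.
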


\begin{proof}
By Lemma~\ref{lem:L4A} the velocity rows \eqref{eq:DE}, and hence the
entries of $M_k^\delta$, are analytic at $\delta=0$.
\end{proof}

\subsection{The limit system and the gap row}

\begin{lemma}[Limit boundary coefficients, velocity rows, and gap row]\label{lem:L4B}
Let $q_j:=\tension{j}(k^2-1)$, $\alpha:=\alpha_c$,
$s=\sinh(k\log(S/R))$, $c=\cosh(k\log(S/R))$, and
\begin{equation}\label{eq:AB}
 A:=\frac{q_1}{R^2}+\frac{2\alpha}{R},\qquad
 B:=\frac{q_2}{R^2}-\frac{\alpha}{R}.
\end{equation}
At $\delta=0$, the boundary Fourier coefficient vectors
$X=\bvp u_2^-$ and $Y=\bvp u_3^-$ are
\begin{equation}\label{eq:limit-X}
 X_1=\frac{Ba_2-Aa_1}{3},\qquad
 X_2=-\frac{Aa_1+2Ba_2}{3},
\end{equation}
\begin{equation}\label{eq:limit-Y}
 Z:=X-a_2J_2,\qquad
 Y_1=e^{-k\log(S/R)}Z_1,\qquad
 Y_2=\frac12\left[\left(\frac {q_3}{S^2}-\frac{2\alpha}{S}\right)a_3-Y_1\right].
\end{equation}
With $v:=\frac{k}{Rs}(Y-cZ)$ and $w:=\frac{k}{Ss}(cY-Z)$, the three
velocity rows of $M_k^0$ are
\begin{align}
 (M_k^0\bvp a)_1&=-(v_1+v_2)+\frac{k}{R}
       \left(X_1+X_2+\frac{\alpha}{R}a_1\right)
       +\frac{\alpha}{R^2}a_1,\label{eq:limit-vel1}\\
 (M_k^0\bvp a)_2&=\frac{k}{R}X_2-v_2+\frac{\alpha}{R^2}a_2,\label{eq:limit-vel2}\\
 (M_k^0\bvp a)_3&=-\frac{k}{S}Y_2-w_2
       -\frac{(k-1)\alpha}{S^2}a_3,\label{eq:limit-vel3}
\end{align}
and the gap row is
\begin{equation}\label{eq:gap-forcing}
 (M_k^0\bvp a)_2-(M_k^0\bvp a)_1=\lambda_k^0\,g+F_k\,m
\end{equation}
with $F_k$ and $\lambda_k^0$ as in \eqref{eq:main-gap}.
\end{lemma}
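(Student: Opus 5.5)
The plan is to solve the limiting six-row system of Lemma~\ref{lem:L4A} (Step~3 of its proof) explicitly, and then substitute into the velocity rows \eqref{eq:DE} at $\delta=0$. By Lemma~\ref{lem:L4A} all quantities are analytic at $\delta=0$, so their limits equal the values of the limiting system and no separate limiting argument is needed.

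\emph{Boundary coefficients.} The rows GT$_1$ and GT$_2$ at $\delta=0$ read $-2X_1-X_2=Aa_1$ and $X_1-X_2=Ba_2$. This $2\times2$ system has determinant $3$, and its solution is \eqref{eq:limit-X}. For the outer annulus I would use $Z=X-a_2J_2=\bvp u_2^+$, which is the inner boundary coefficient of the outer annulus. Row FX$_3$ involves only the first components. Rewritten with $Z_1=X_1+\alpha a_2/R$ (from $c_1^\top J_2=-\alpha/R$), it becomes $Z_1-(s+c)Y_1=0$. Since $s+c=e^{kL}$ with $L=\log(S/R)$, this gives $Y_1$. Then GT$_3$ gives $Y_2$, which is \eqref{eq:limit-Y}. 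In the limit, FX$_1$ gives $U_2=\frac kR X_2$ and FX$_2$ gives $U_1+U_2$. These are needed only through the combinations that enter the velocity rows.

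\emph{Velocity rows.} For each $j$ I would write $\dot a_j=\bvp c_{p_-(j)}^\top(\bvp v_j^+-\bvp v_j^-)+\bvp c_{p_-(j)}^\top H_ja_j$, using the data $H_j=-J_j/R_j$ from \eqref{eq:base-jumps}.
\begin{itemize}
\item At $\Gamma_1$ (inner phase $3$, with $\bvp c_3=(-1,-1)$), the disk gives $\bvp v_1^-=\frac kR\bvp u_1^-$ and the resistor form \eqref{eq:resistor} gives $\bvp v_1^+\to U$. This row involves $U_1+U_2$.
\item At $\Gamma_2$ (inner phase $1$), $\bvp v_2^-\to U+0$ and $\bvp v_2^+=v$ by the outer DtN formula written in terms of $(Z,Y)$.
\end{itemize}
Next, I would substitute FX$_2$ to eliminate $U_1+U_2$ in favour of $v$-type terms. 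The aim is to confirm that the $\delta^{-1}$ factor has been absorbed into $U$, so that only the combinations shown in \eqref{eq:limit-vel1} and \eqref{eq:limit-vel2} survive. Row $3$ is direct: phase $2$, $\bvp v_3^-=w$ and $\bvp v_3^+=-\frac kS Y$ after using $\bvp u_3^+=Y-a_3J_3$. The factor $(k-1)\alpha/S^2$ comes from combining $J_3$ and $H_3$.

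\emph{Gap row.} This is the step I expect to be the main obstacle: subtracting the rows so that every dependence on $a_3$, and on the outer-annulus quantities $v,Y,Z$, cancels. The subtraction of \eqref{eq:limit-vel1} from \eqref{eq:limit-vel2} contains $v_1$. The first component $v_1$ depends on $Z_1$ and $Y_1$, and hence only on $X_1$ and $a_2$. I would therefore try to show that FX$_3$ forces $Y_1-cZ_1=-sZ_1$, so that $v_1=-\frac kR Z_1$ is local. This removes $s,c$ and leaves a rational expression in $A$, $B$, $\alpha$, $a_1$, $a_2$ only. Then I would insert \eqref{eq:limit-X}, substitute $a_1=m-g/2$ and $a_2=m+g/2$, and expand $A$ and $B$ via $q_j=\sigma_j(k^2-1)$. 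Collecting the coefficients of $g$ and $m$ should give $\lambda_k^0$ and $F_k$ in \eqref{eq:main-gap}; the factor $1/3$ comes from the determinant $3$. As a check I would use $k=1$: there $F_1=0$, which is consistent with Proposition~\ref{prop:translation}.
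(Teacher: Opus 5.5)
Your proposal is correct and follows the paper's proof essentially step for step. You solve GT$_1$/GT$_2$ (determinant $3$) for $X$, use FX$_3$ to get $(c+s)Y_1=Z_1$ and then GT$_3$ for $Y_2$, use FX$_1$ and FX$_2$ to eliminate $U$ from the velocity rows, and rely on the same key observation $v_1=-\frac{k}{R}Z_1$ that makes the gap row independent of the outer annulus before substituting $a_1=m-g/2$, $a_2=m+g/2$.

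The only slip is cosmetic. In row $3$ you should write $\bvp v_3^+=-\frac{k}{S}(Y-a_3J_3)$ rather than $-\frac{k}{S}Y$. It is exactly this $J_3$ term, combined with $\bvp c_2^\top H_3=\alpha/S^2$, that produces the coefficient $-(k-1)\alpha/S^2$.
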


\begin{proof}
At $\delta=0$ the system of Lemma~\ref{lem:L4A} reads, with
$\bvp u_1^-=X+a_1J_1$, $\bvp u_1^+=X$, $\bvp v_1^+=\bvp v_2^-=U$,
$R_2=R$ and $\alpha=\alpha_c$ in the base data,
\begin{alignat*}{3}
 &\text{GT}_1:\ &-2X_1-X_2&=Aa_1, &\qquad
 &\text{FX}_1:\ -\tfrac kRX_2+U_2=0,\\
 &\text{GT}_2:\ &X_1-X_2&=Ba_2, &
 &\text{FX}_2:\ -(v_1+v_2)+U_1+U_2=0,\\
 &\text{GT}_3:\ &Y_1+2Y_2&=\Bigl(\tfrac{q_3}{S^2}-\tfrac{2\alpha}{S}\Bigr)a_3, &
 &\text{FX}_3:\ \tfrac{k}{Ss}Z_1-\tfrac kS\Bigl(1+\tfrac cs\Bigr)Y_1=0,
\end{alignat*}
where $d_1^\top J_1=-2\alpha/R$, $G_1=0$, $d_2^\top G_2=\alpha/R$,
$d_3^\top G_3=2\alpha/S$, $c_2^\top J_1=c_3^\top J_2=0$ and
$c_2^\top H_1=c_3^\top H_2=c_1^\top H_3=0$ were used, $Z=X-a_2J_2$ with
$Z_1=X_1+\alpha a_2/R$ and $Z_1+Z_2=X_1+X_2$, and
$v=\bvp v_2^+$, $w=\bvp v_3^-$ are the outer-annulus formulas
\eqref{eq:vtraces} while $\bvp v_3^+=-\frac kS(Y-a_3J_3)$.

\pstep{1}{Boundary coefficients}
GT$_1$ and GT$_2$ form a $2\times2$ system in $X$ of determinant $3$;
its solution is \eqref{eq:limit-X}.  FX$_3$ alone gives
$(c+s)Y_1=Z_1$, i.e.\ $Y_1=(c-s)Z_1=e^{-k\log(S/R)}Z_1$ because
$(c+s)(c-s)=1$; GT$_3$ then gives $Y_2$.  This is \eqref{eq:limit-Y}.
FX$_1$ gives $U_2=\frac kRX_2$ and FX$_2$ gives $U_1=v_1+v_2-U_2$.

\pstep{2}{Velocity rows}
Substituting $U$ into \eqref{eq:DE} with
$c_3^\top J_1=-\alpha/R$, $c_3^\top H_1=c_1^\top H_2=\alpha/R^2$,
$c_2^\top J_3=-\alpha/S$ and $c_2^\top H_3=\alpha/S^2$ gives
\eqref{eq:limit-vel1}--\eqref{eq:limit-vel3}.

\pstep{3}{Gap row}
Since $Y_1=(c-s)Z_1$, one has $v_1=\frac{k}{Rs}(Y_1-cZ_1)=-\frac kRZ_1$.
Substituting this, $U_2=\frac kRX_2$ and $Z_1=X_1+\alpha a_2/R$ into
\eqref{eq:limit-vel2}--\eqref{eq:limit-vel1}, all terms in $X_2$ and
$v_2$ cancel and
\[
 (M_k^0\bvp a)_2-(M_k^0\bvp a)_1
 =-\frac{2k}{R}X_1-\frac{\alpha}{R^2}\bigl((k+1)a_1+(k-1)a_2\bigr).
\]
With $a_1=m-g/2$, $a_2=m+g/2$ and \eqref{eq:limit-X},
$3X_1=(B-A)m+\tfrac12(A+B)g$ and $(k+1)a_1+(k-1)a_2=2km-g$, where
$B-A=(q_2-q_1)/R^2-3\alpha/R$ and $A+B=(q_1+q_2)/R^2+\alpha/R$.
Hence, the coefficient of $m$ is
$-\frac{2k}{3R}(B-A)-\frac{2k\alpha}{R^2}=\frac{2k(q_1-q_2)}{3R^3}=F_k$,
the terms in $\alpha$ cancelling, and the coefficient of $g$ is
$-\frac{k}{3R}(A+B)+\frac{\alpha}{R^2}
=-\frac{k(q_1+q_2)}{3R^3}+\frac{(3-k)\alpha}{3R^2}=\lambda_k^0$.
\end{proof}

\subsection{The composite block}

Let $\Mcomp{k}$ denote the matrix \eqref{eq:W5} of the two-phase
annulus with radii $(R_1,R_2)=(R,S)$ and tensions
$\tension{\rm in}=\tension1+\tension2$, $\tension{\rm out}=\tension3$.
Its base coefficient \eqref{eq:base2} is
$\bar\alpha=((\tension1+\tension2)/R+\tension3/S)/(2\log(S/R))=\alpha_c$,
so that with $g_{\rm in}:=(q_1+q_2)/(2R^2)$ and
$g_{\rm out}:=q_3/(2S^2)$,
\begin{equation}\label{eq:Mcomp-explicit}
 \Mcomp{k}=
 \begin{pmatrix}
 -\dfrac{kc}{Rs}\Bigl(g_{\rm in}+\dfrac{\alpha_c}R\Bigr)
   -\dfrac{k g_{\rm in}}R+\dfrac{\alpha_c}{R^2}
 &-\dfrac{k}{Rs}\Bigl(g_{\rm out}-\dfrac{\alpha_c}S\Bigr)\\[3mm]
 -\dfrac{k}{Ss}\Bigl(g_{\rm in}+\dfrac{\alpha_c}R\Bigr)
 &-\dfrac{kc}{Ss}\Bigl(g_{\rm out}-\dfrac{\alpha_c}S\Bigr)
   -\dfrac{k g_{\rm out}}S+\dfrac{\alpha_c}{S^2}
 \end{pmatrix}.
\end{equation}
It is the mode matrix of a two-interface problem in which the inner
interface carries the sum of the two collapsed tensions; it is not the
mode matrix of a two-phase model with one uniform tension between
phases $3$ and $2$.

\begin{lemma}[Composite block]\label{lem:L4D}
Let $i(m,a_3):=(m,m,a_3)^\top$ and
$P(a_1,a_2,a_3):=\bigl(\tfrac12(a_1+a_2),a_3\bigr)^\top$.  For
arbitrary positive tensions,
\begin{equation}\label{eq:PMi}
 PM_k^0\,i=\Mcomp{k}.
\end{equation}
More precisely, for $\bvp a=i(m,a_3)$,
\begin{equation}\label{eq:inner-rows}
 (M_k^0\bvp a)_1=(\Mcomp{k}(m,a_3)^\top)_1-\tfrac12F_km,\qquad
 (M_k^0\bvp a)_2=(\Mcomp{k}(m,a_3)^\top)_1+\tfrac12F_km,
\end{equation}
and $(M_k^0\bvp a)_3=(\Mcomp{k}(m,a_3)^\top)_2$.
\end{lemma}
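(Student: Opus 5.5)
We substitute $\bvp a=i(m,a_3)=(m,m,a_3)^\top$ into the explicit limit formulas of Lemma~\ref{lem:L4B} and compare the result with \eqref{eq:Mcomp-explicit} entry by entry. The gap row \eqref{eq:gap-forcing} at $g=0$ already gives $(M_k^0\bvp a)_2-(M_k^0\bvp a)_1=F_km$. Hence \eqref{eq:inner-rows} is equivalent to two identities:
\[
 \tfrac12\bigl((M_k^0\bvp a)_1+(M_k^0\bvp a)_2\bigr)=(\Mcomp{k}(m,a_3)^\top)_1,
 \qquad
 (M_k^0\bvp a)_3=(\Mcomp{k}(m,a_3)^\top)_2 .
\]
Both identities together are exactly $PM_k^0i=\Mcomp{k}$, which is \eqref{eq:PMi}.

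\pstep{1}{Boundary coefficients}
With $a_1=a_2=m$, \eqref{eq:limit-X} gives
\[
 X_1=\tfrac13(B-A)m,\qquad X_2=-\tfrac13(A+2B)m .
\]
From this we form $Z=X-mJ_2$, so that $Z_1=X_1+\alpha_cm/R$ and $Z_1+Z_2=X_1+X_2$.

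\pstep{2}{Decoupling of $\chemical{1}$ in the outer annulus}
By FX$_3$ we have $Y_1=(c-s)Z_1$. This means the $\chemical{1}$-component in the outer annulus is a pure $r^{-k}$ mode, continued unchanged into the exterior. Consequently $v_1=-\frac kRZ_1$ and $w_1=-\frac kSY_1$ are local, and $\chemical1$ carries no coupling between $\interface{2}$ and $\interface{3}$.

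The coupling is then carried by the $\chemical2$-component $(Z_2,Y_2)$. This matches the two-phase composite, whose phases are $3$ inside $\interface{2}$ and $2$ in the shell, with scalar potential corresponding to $\chemical{2}$ up to the zero-sum constraint. Accordingly, I will express $Z_2$ and $Y_2$ in the composite variables. The goal is
\[
 Z_2\ \text{(or } Z_2+\tfrac12Z_1\text{)}\ \propto\ \Bigl(g_{\rm in}+\frac{\alpha_c}{R}\Bigr)m,
 \qquad
 Y_2+\tfrac12 Y_1\ \propto\ -\Bigl(g_{\rm out}-\frac{\alpha_c}{S}\Bigr)a_3,
\]
read off GT$_3$ and the expressions for $A$ and $B$. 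The relevant combination is $A+B$, which is expected because $\tfrac12(A+B)$ is precisely where $q_1+q_2$, and hence the summed tension, appears.

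\pstep{3}{Velocity rows}
I insert these into \eqref{eq:limit-vel1}--\eqref{eq:limit-vel3}. In the average of the first two rows, the terms in $v_1$ and $Z_1$ must cancel against the $\frac kR(X_1+\dots)$ terms, as they did in the proof of Lemma~\ref{lem:L4B}. The remaining $v_2$ term, with the annular Dirichlet-to-Neumann coefficients $\frac{k}{Rs}$ and $\frac{kc}{Rs}$, should reproduce the first row of \eqref{eq:Mcomp-explicit}. Likewise, $-\frac kSY_2-w_2-\frac{(k-1)\alpha_c}{S^2}a_3$ should reproduce its second row. Throughout I use $c^2-s^2=1$ and $\bar\alpha=\alpha_c$ for the composite.

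\pstep{4}{Main obstacle}
I expect the main difficulty to be the bookkeeping in Step~2: identifying exactly which linear combination of the components of $Z$ and $Y$ plays the role of the composite boundary coefficients $S_1=(g_{\rm in}+\alpha_c/R)m$ and $S_2=-(g_{\rm out}-\alpha_c/S)a_3$ of Proposition~\ref{prop:W5}, including the factor $2$ from the two-phase reduction $w=\chemical3$ with $2w=\tensionsym_{\rm in}/R$. My first attempt will be to test the scalar $\tfrac13(\bvp d_2-\bvp d_1)^\top(\cdot)$ and $\bvp c_2^\top$-projections, checking them against the $k=1$ translation identity of Proposition~\ref{prop:translation} as a consistency check. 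Once that identification is fixed, the remaining steps are direct substitution.
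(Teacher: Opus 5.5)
Your proposal is correct, but it organizes the computation differently from the paper. The paper sets $a_1=a_2=m$ and writes $X$, $Z$, $Y$ explicitly in terms of the mismatch $d:=(q_1-q_2)/(6R^2)$ and $e:=c-s$. It then computes all three velocity rows. The $d$-terms appear as $\mp 2kdm/R=\mp\tfrac12F_km$ in rows $1,2$ and cancel in row $3$ through $e(c+s)=1$.

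You instead take the difference of rows $1$ and $2$ from \eqref{eq:gap-forcing} at $g=0$. This is legitimate, since Lemma~\ref{lem:L4B} is already proved. You then check the average and the third row in composite variables, and the combination you name in Step~2 is the right one:
\begin{itemize}
\item \eqref{eq:limit-X} and $Z=X-mJ_2$ give $Z_2+\tfrac12Z_1=-(g_{\rm in}+\alpha_c/R)\,m=-S_1$;
\item GT$_3$ gives $Y_2+\tfrac12Y_1=(g_{\rm out}-\alpha_c/S)\,a_3=-S_2$.
\end{itemize}
Here $S_1,S_2$ are the composite coefficients from the proof of Proposition~\ref{prop:W5}.

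To finish, take $v,w$ as in Lemma~\ref{lem:L4B} and put $\tilde v:=v_2+\tfrac12v_1$, $\tilde w:=w_2+\tfrac12w_1$.
\begin{itemize}
\item The average of \eqref{eq:limit-vel1} and \eqref{eq:limit-vel2} is $-\tilde v+\frac kR\bigl(Z_2+\tfrac12Z_1+\frac{\alpha_c}{R}m\bigr)+\frac{\alpha_c}{R^2}m$. The $v_1$-terms cancel identically, so FX$_3$ is not even needed here. The middle term equals $-\frac{kg_{\rm in}}{R}m$, and the whole expression is the first row of \eqref{eq:Mcomp-explicit}.
\item In \eqref{eq:limit-vel3}, your decoupling $w_1=-\frac kSY_1$ from FX$_3$ makes the $Y_1$-terms cancel. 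What remains, $-\frac kS(Y_2+\tfrac12Y_1)-\tilde w-\frac{(k-1)\alpha_c}{S^2}a_3$, is the second row.
\end{itemize}

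Your route buys an explanation. The effective scalar is $(\bvp c_2+\tfrac12\bvp c_1)^\top\bvp W=-\tfrac12(\bvp d_1+\bvp d_2)^\top\bvp W$, and $\bvp d_1+\bvp d_2=\bvp c_3-\bvp c_2$ is the Gibbs--Thomson vector of a single interface between phases $3$ and $2$. That is why the two tensions add. The paper's route is a shorter mechanical check, and it produces the $\pm\tfrac12F_km$ splitting directly rather than importing it from the gap row.

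One correction to Step~4: the projections you plan to try first do not work.
\begin{itemize}
\item $\tfrac13(\bvp d_2-\bvp d_1)=\bvp c_1$ extracts exactly the decoupled $\chemical1$-component.
\item $\bvp c_2^\top$ alone keeps the mismatch, since $Z_2=-(g_{\rm in}+\alpha_c/R-d)\,m$ and $Y_2=(g_{\rm out}-\alpha_c/S)\,a_3+edm$.
\end{itemize}
Neither reproduces $S_1,S_2$. Use $Z_2+\tfrac12Z_1$ and $Y_2+\tfrac12Y_1$, as you already wrote in Step~2.
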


\begin{proof}
Let $a_1=a_2=m$ and keep the notation of Lemma~\ref{lem:L4B}.  Put
$d:=(q_1-q_2)/(6R^2)$ and $e:=e^{-k\log(S/R)}=c-s$.  Then
\eqref{eq:limit-X}--\eqref{eq:limit-Y} become
\[
 X=\begin{pmatrix}-(\alpha/R+2d)m\\(-g_{\rm in}+d)m\end{pmatrix},\quad
 Z=\begin{pmatrix}-2dm\\-(g_{\rm in}+\alpha/R-d)m\end{pmatrix},\quad
 Y=\begin{pmatrix}-2edm\\(g_{\rm out}-\alpha/S)a_3+edm\end{pmatrix}.
\]
Substituting into \eqref{eq:limit-vel1}--\eqref{eq:limit-vel3} gives
\begin{align*}
 (M_k^0\bvp a)_1&=(\Mcomp{k}(m,a_3)^\top)_1-\frac{2kdm}{R},&
 (M_k^0\bvp a)_2&=(\Mcomp{k}(m,a_3)^\top)_1+\frac{2kdm}{R},\\
 (M_k^0\bvp a)_3&=(\Mcomp{k}(m,a_3)^\top)_2 ,
\end{align*}
with $\Mcomp{k}$ as in \eqref{eq:Mcomp-explicit}: the terms in
$d$ cancel in the average of the first two rows and in the third row,
the latter by $e(c+s)=1$.  Since $2kd/R=F_k/2$, this is
\eqref{eq:inner-rows}, and averaging gives \eqref{eq:PMi}.
\end{proof}

\subsection{Main result and consequence}

\begin{proof}[Proof of Theorem~\ref{thm:main}]
Part (a) is Corollary~\ref{cor:T1}.  For (b), Lemma~\ref{lem:L4B}
shows that the gap row of $M_k^0$ in the coordinates $(m,g,a_3)$ is
$(F_k,\lambda_k^0,0)$, and Lemma~\ref{lem:L4D} shows that the
$(m,a_3)$-entries of the $m$-row and of the $a_3$-row are those of
$\Mcomp{k}$; this is \eqref{eq:main-form}, with $c_1$ and $c_3$
the $g$-entries of these two rows.  For (c), the mean thickness along
the concentric solution grows at the logarithmic rate $\alpha/(R_1R_2)$
by Corollary~\ref{cor:hC}, which tends to $\alpha_c/R^2$ as
$\delta\downarrow0$; subtracting it from $\lambda_k^0$ gives
\eqref{eq:main-relative}, which is negative because $k\ge1$,
$\tension1+\tension2>0$ and $\alpha_c>0$.  For (d), $F_k=0$ exactly
when $\tension1=\tension2$ or $k=1$; then the $g$-row of
\eqref{eq:main-form} is $(0,\lambda_k^0,0)$, so $\dot g=\lambda_k^0g$
and $V=\{g=0\}$ is invariant, the restriction of $M_k^0$ to
$V\cong\{(m,a_3)\}$ is $\Mcomp{k}$, and expanding
$\det(\mu I-M_k^0)$ along the $g$-row gives
$(\mu-\lambda_k^0)\det(\mu I-\Mcomp{k})$, which is
\eqref{eq:main-split}.
\end{proof}

\begin{corollary}[Finite thickness]\label{cor:T3}
For fixed $k$, the coefficients of the gap row \eqref{eq:gaprow} of
$M_k^\delta$ satisfy, as $\delta\downarrow0$,
\begin{equation}\label{eq:beta-orders}
 \beta_m=F_k+O(\delta),\qquad
 \beta_g=\lambda_k^0+O(\delta),\qquad
 \beta_3=O(\delta).
\end{equation}
Hence, $\Lambda_k(R,R+\delta,S)=\Lambda_k^0+O(\delta)$, so the
relative gap rate is negative for all sufficiently
small $\delta$ (depending on $k$), and
\begin{equation}\label{eq:scalarized}
 \dot g=\lambda_k^0\,g+F_k\,m
 +O(\delta)\bigl(|m|+|g|+|a_3|\bigr).
\end{equation}
The gap row is autonomous in the limit exactly when $F_k=0$.
\end{corollary}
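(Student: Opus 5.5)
Corollary~\ref{cor:T3} follows from the analyticity of $M_k^\delta$ (Corollary~\ref{cor:T1}) together with the exact limit values computed in Lemma~\ref{lem:L4B}. Write $M_k^\delta=M_k^0+\delta N_k(\delta)$ with $N_k$ analytic near $\delta=0$; this is possible because the entries of $M_k^\delta$ are rational in $\delta$ without poles on $[0,S-R)$ (Lemma~\ref{lem:L4A}, Step~4). The change of coordinates $(a_1,a_2,a_3)\mapsto(m,g,a_3)$ is fixed and independent of $\delta$. Hence the gap-row coefficients $\beta_m,\beta_g,\beta_3$ of \eqref{eq:gaprow} are fixed linear combinations of entries of $M_k^\delta$, and each equals its value at $\delta=0$ plus $O(\delta)$.

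The values at $\delta=0$ come from \eqref{eq:gap-forcing}. There the gap row of $M_k^0$ is $F_k m+\lambda_k^0 g$ with no $a_3$ term, which gives $\beta_m=F_k+O(\delta)$, $\beta_g=\lambda_k^0+O(\delta)$ and $\beta_3=O(\delta)$. This is \eqref{eq:beta-orders}.

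For the relative gap rate, recall $\Lambda_k=\beta_g-\alpha/(R_1R_2)$ from \eqref{eq:rates}. At the radii $(R,R+\delta,S)$, the coefficient $\alpha(\delta)$ in \eqref{eq:base-flow} is a smooth function of $\delta$ near $0$: its denominator $2\log(S/R)$ is independent of $\delta$, and its numerator is smooth in $R_2=R+\delta$. Thus $\alpha(\delta)=\alpha_c+O(\delta)$, and therefore $\alpha/(R(R+\delta))=\alpha_c/R^2+O(\delta)$. Combining this with $\beta_g=\lambda_k^0+O(\delta)$ gives $\Lambda_k=\Lambda_k^0+O(\delta)$. Theorem~\ref{thm:main}(c) gives $\Lambda_k^0<0$ strictly, so $\Lambda_k<0$ once $\delta$ is small enough. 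The threshold depends on $k$, since the $O(\delta)$ constant does.

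Equation \eqref{eq:scalarized} is just the identity $\dot g=\beta_m m+\beta_g g+\beta_3 a_3$ with the expansions \eqref{eq:beta-orders} substituted in. The last sentence of the corollary reads the limiting row off \eqref{eq:beta-orders}. At $\delta=0$ the gap row is $(F_k,\lambda_k^0,0)$, so $\dot g$ depends on $g$ alone exactly when $F_k=0$. By \eqref{eq:main-gap} this happens exactly when $\tension1=\tension2$ or $k=1$.

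There is no real obstacle here. The only point needing care is that the $O(\delta)$ remainders really are uniform. That uniformity is guaranteed by analyticity at the single point $\delta=0$ for fixed $k$, $R$, $S$ and fixed tensions; no uniformity in $k$ is claimed.
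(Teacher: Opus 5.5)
Your proposal is correct and follows the paper's proof. Like the paper, you get analyticity of the gap-row coefficients at $\delta=0$ from Lemma~\ref{lem:L4A}, read their limits $(F_k,\lambda_k^0,0)$ off Lemma~\ref{lem:L4B}, and use $\alpha/(R_1R_2)=\alpha_c/R^2+O(\delta)$; you also spell out the routine steps the paper leaves implicit: the sign from Theorem~\ref{thm:main}(c), the substitution giving \eqref{eq:scalarized}, and the autonomy criterion $F_k=0$.
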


\begin{proof}
By Lemma~\ref{lem:L4A} each coefficient is analytic in $\delta$ at
$\delta=0$, and by Lemma~\ref{lem:L4B} its limit is the corresponding
entry of $(F_k,\lambda_k^0,0)$; moreover
$\alpha/(R_1R_2)=\alpha_c/R^2+O(\delta)$.
\end{proof}

\begin{corollary}[Sign of the pure-gap rate]\label{cor:T5}
For all positive tensions, $\lambda_1^0=\tfrac23\alpha_c/R^2>0$ and
$\lambda_k^0<0$ for every $k\ge3$.  For $k=2$, with
$\varrho:=\tension3/(\tension1+\tension2)$,
\begin{equation}\label{eq:threshold}
 \lambda_2^0>0
 \iff \frac{1+\varrho/\tau}{2\log\tau}>6
 \iff \tau<\tau_*(\varrho),
\end{equation}
where $\tau_*(\varrho)$ is the unique root of the middle expression
equal to $6$; for equal tensions ($\varrho=\tfrac12$) the condition
reads $(2\tau+1)/(2\tau\log\tau)>12$ and $\tau_*=1.1278104\ldots$.
For every $k\ge1$, $\Lambda_k^0<0$.  When
$\tau<\tau_*(\varrho)$, the instantaneous $k=2$ pure-gap rate
$\lambda_2^0$ is positive, while the relative gap rate $\Lambda_2^0$ is
negative.
\end{corollary}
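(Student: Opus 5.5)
The plan is to read everything off the closed form \eqref{eq:main-gap} for $\lambda_k^0$, using only that $\tension1,\tension2,\tension3>0$ and $\tau>1$ force $\alpha_c>0$ in \eqref{eq:alphac}. Write
\[
 \lambda_k^0=-\frac{k(k^2-1)(\tension1+\tension2)}{3R^3}+\frac{(3-k)\alpha_c}{3R^2}.
\]

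\emph{The cases $k=1$ and $k\ge3$.} For $k=1$ the first term vanishes because $k^2-1=0$. The second term then equals $\frac{2\alpha_c}{3R^2}>0$, which is the stated value of $\lambda_1^0$. For $k\ge3$ the first term is strictly negative, since $k(k^2-1)>0$ and $\tension1+\tension2>0$. The second term is nonpositive, since $3-k\le0$ and $\alpha_c>0$. Hence $\lambda_k^0<0$.

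\emph{The case $k=2$.} Here
\[
 \lambda_2^0=-\frac{2(\tension1+\tension2)}{R^3}+\frac{\alpha_c}{3R^2},
\]
so $\lambda_2^0>0$ if and only if $R\alpha_c/(\tension1+\tension2)>6$. Inserting $S=\tau R$ and $\tension3=\varrho(\tension1+\tension2)$ into \eqref{eq:alphac} gives
\[
 \frac{R\alpha_c}{\tension1+\tension2}=f(\tau):=\frac{1+\varrho/\tau}{2\log\tau}.
\]
This is the first equivalence in \eqref{eq:threshold}.

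For the second equivalence I will show that $f$ is strictly decreasing on $(1,\infty)$. The numerator $1+\varrho/\tau$ is positive and decreasing. The denominator $2\log\tau$ is positive and strictly increasing. Moreover $f(\tau)\to+\infty$ as $\tau\downarrow1$ and $f(\tau)\to0$ as $\tau\to\infty$. By continuity and strict monotonicity, $f=6$ has exactly one root $\tau_*(\varrho)$, and $f(\tau)>6$ holds precisely when $\tau<\tau_*(\varrho)$.

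For equal tensions, $\varrho=\tfrac12$ and $f(\tau)=(2\tau+1)/(4\tau\log\tau)$. Thus $f(\tau)>6$ is the same as $(2\tau+1)/(2\tau\log\tau)>12$. The value $\tau_*=1.1278104\ldots$ will be obtained by a bisection or Newton evaluation of the monotone function $f-6$. It can be certified by checking the sign of $f-6$ at two rational endpoints bracketing the quoted digits.

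\emph{The relative rate and the final statement.} The claim $\Lambda_k^0<0$ for every $k\ge1$ is exactly Theorem~\ref{thm:main}(c), which follows directly from \eqref{eq:main-relative}. The last sentence of the corollary then combines the $k=2$ threshold, which gives $\lambda_2^0>0$ when $\tau<\tau_*(\varrho)$, with $\Lambda_2^0<0$.

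No step is a genuine obstacle. The only point needing care is the monotonicity of $f$ and its limits at the two ends of $(1,\infty)$, which guarantee that the root is unique; the numerical value of $\tau_*$ is a routine verification.
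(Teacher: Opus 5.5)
Your proposal is correct and follows the paper's proof essentially step for step. Both read the signs for $k=1$ and $k\ge3$ off \eqref{eq:main-gap}. Both rewrite $\lambda_2^0=\frac{\tension1+\tension2}{3R^3}\bigl(\varphi-6\bigr)$ with $\varphi=(1+\varrho/\tau)/(2\log\tau)$ strictly decreasing in $\tau$. For uniqueness of the root, the paper uses the explicit values $\varphi(1.01,\varrho)>6$ and $\varphi(e^{1+\varrho},\varrho)<\tfrac12$, where you use the limits of $\varphi$ at the two ends of $(1,\infty)$. Both then take $\Lambda_k^0<0$ from \eqref{eq:main-relative} and bracket $\tau_*$ by sign checks at two endpoints.

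The one step you leave undone, certifying the seven digits of $\tau_*$, is less routine than you suggest. The root is $\tau_*\approx1.12781043$, only about $2.6\cdot10^{-8}$ above the lower endpoint $1.1278104$, so $\log\tau$ must be controlled rigorously to roughly $10^{-8}$. The paper does this with partial sums of $-\log(1-x)$ at $x=1-1/\tau$ together with a geometric bound on the tail.
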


\begin{proof}
Formula \eqref{eq:main-gap} gives $\lambda_1^0=\tfrac23\alpha_c/R^2>0$.
For $k\ge3$ its first term is negative, and its second one is nonpositive, so
$\lambda_k^0<0$.  For $k=2$ it reads
$\lambda_2^0=-2(\tension1+\tension2)/R^3+\alpha_c/(3R^2)$; with
$\varphi(\tau,\varrho):=(1+\varrho/\tau)/(2\log\tau)$ one has
$\alpha_cR=(\tension1+\tension2)\varphi(\tau,\varrho)$ by \eqref{eq:alphac},
hence
\[
 \lambda_2^0=\frac{\tension1+\tension2}{3R^3}\bigl(\varphi(\tau,\varrho)-6\bigr),
\]
which gives the first equivalence in \eqref{eq:threshold}.  For fixed
$\varrho>0$, $\varphi(\cdot,\varrho)$ is continuous and strictly decreasing
on $(1,\infty)$ (its numerator is positive and decreasing, its denominator
positive and increasing), with $\varphi(1.01,\varrho)>6$ (since
$\log\tau\le\tau-1$) and $\varphi(e^{1+\varrho},\varrho)<\tfrac12$.  By the
intermediate value theorem there is exactly one $\tau_*(\varrho)>1$ with
$\varphi(\tau_*,\varrho)=6$, and $\varphi(\tau,\varrho)>6$ if and only if
$\tau<\tau_*(\varrho)$; this is the second equivalence.  For
$\varrho=\tfrac12$ the condition $\varphi>6$ reads
$(2\tau+1)/(2\tau\log\tau)>12$, and $1.1278104<\tau_*<1.1278105$ follows
from $\varphi(1.1278104,\tfrac12)>6>\varphi(1.1278105,\tfrac12)$, where
$\log\tau$ is bracketed by the twelfth partial sum of
$-\log(1-x)=\sum_{i\ge1}x^i/i$ at $x=1-1/\tau$ together with the
geometric bound $x^{13}/(1-x)<6\cdot10^{-13}$ on its remainder, which is
far below the accuracy $2.4\cdot10^{-8}$ in $\log\tau$ that separating
the two endpoints requires; in particular the seven digits displayed in
$\tau_*=1.1278104\ldots$ are correct.  Finally, by \eqref{eq:main-gap} and \eqref{eq:main-relative},
\[
 \Lambda_k^0=-\frac{k(k^2-1)(\tension1+\tension2)}{3R^3}-\frac{k\,\alpha_c}{3R^2}<0
 \qquad(k\ge1),
\]
both terms being nonpositive and the second negative.
\end{proof}

\begin{remark}[Location of the frozen positive mode]
\label{rem:positive-comp}
For equal unit tensions, $R=1$, $S=2$ and $k=2$, the composite matrix
\eqref{eq:Mcomp-explicit} has $\alpha_c=5/(4\log2)$ and
\[
 \det \Mcomp{2}
 =\frac{48}{5}-\frac{44}{5}\alpha_c-\frac34\alpha_c^2<0 ,
\]
because $\log2<1$ gives $\alpha_c>5/4$ and the right side is negative
already at $\alpha_c=5/4$.  Its eigenvalues are therefore real and of
opposite signs; the positive one is numerically $0.5809$.  By
Theorem~\ref{thm:main}(d),
$\operatorname{spec}M_2^0=\{\lambda_2^0\}\cup\operatorname{spec}\Mcomp{2}$
with $\lambda_2^0=-4+\alpha_c/3<0$, so the limiting matrix has exactly
one positive eigenvalue, and its eigenvector lies in $V$: it is a mode
of the outer shell with vanishing thickness modulation.  For small
$\delta>0$ the simple positive root persists by continuity, with a gap
component of order $\delta$ in a normalized eigenvector.  A positive frozen
eigenvalue is therefore not in conflict with the negative relative
gap rate \eqref{eq:main-relative}: in the thin-layer limit it
belongs to the composite motion.
\end{remark}

\section*{Formal verification and generative AI use}
Selected results for the fixed-mode linearized system were formalized in Lean 4.
Claude Code (Anthropic) together with Codex and ChatGPT (OpenAI) assisted with drafting and editing the manuscript and in developing the Lean formalization.
The author reviewed the resulting text, proofs, citations, and formalization and takes responsibility for the final work.

\bibliographystyle{plainnat}
\bibliography{paper4_draft}

\end{document}